\documentclass[11pt,a4paper,reqno]{amsart}

\usepackage{amsmath,amsfonts,amssymb,amsthm,epsfig}

\voffset=-1.5cm \textheight=23cm \hoffset=-.5cm \textwidth=16cm
\oddsidemargin=1cm \evensidemargin=-.1cm
\footskip=35pt \linespread{1.10}
\parindent=20pt

\sloppy \allowdisplaybreaks
\numberwithin{equation}{section}

\font\script=rsfs10 at 12pt

\def\L{{\mbox{\script L}\;}}

\def\Prob{{\mathcal P}}

\def\N{\mathbb N}

\def\R{\mathbb R}

\def\res{\mathop{\hbox{\vrule height 7pt width .5pt depth 0pt \vrule height .5pt width 6pt depth 0pt}}\nolimits}
\def\proofof#1{\begin{proof}[#1].}
\def\eps{\varepsilon}
\def\e{\varepsilon}
\def\indic{{\bf 1}}

\def\proofof#1{\begin{proof}[Proof of #1]}

\newcommand\f{f{\L}^d\res\Omega}

\newtheorem{thm}{Theorem}[section]
\newtheorem{corollary}[thm]{Corollary}
\newtheorem{proposition}[thm]{Proposition}
\newtheorem{lemma}[thm]{Lemma}
\newtheorem{definition}[thm]{Definition}
\newtheorem{rem}[thm]{Remark}
\newtheorem{example}[thm]{Example}

\title[Optimum and equilibrium in a transport problem with queue]{Optimum and equilibrium in a transport problem with queue penalization effect}

\author{Gianluca Crippa}\address{G.C.: Dipartimento di Matematica, Universit\`a degli Studi di Parma,
viale G.P.~Usberti 53/A (Campus), 43100 Parma, Italy}
\email{gianluca.crippa@unipr.it}
\author{Chlo\'e Jimenez}\address{C.J.: Laboratoire de Math\'ematiques de Brest, 6, avenue Victor Le Gorgeu, CS 93837, F-29238 Brest Cedex 3, France}\email{chloe.jimenez@univ-brest.fr}
\author{Aldo Pratelli}\address{A.P.: Dipartimento di Matematica, Universit\`a degli Studi di Pavia, via Ferrata 1, 27100 Pavia, Italy}\email{aldo.pratelli@unipv.it}

\begin{document}

\begin{abstract} Consider a distribution of citizens in an urban area in which some services (supermarkets, post offices\ldots) are present. Each citizen, in order to use a service, spends an amount of time which is due both to the travel time to the service and to the queue time waiting in the service. The choice of the service to be used is made by every citizen in order to be served more quickly. Two types of problems can be considered: a global optimization of the total time spent by the citizens of the whole city (we define a global {\em optimum} and we study it with techniques from optimal mass transportation) and an individual optimization, in which each citizen chooses the service trying to minimize just his own time expense (we define the concept of {\em equilibrium} and we study it with techniques from game theory). In this framework we are also able to exhibit two time-dependent strategies (based on the notions of prudence and memory respectively) which converge to the equilibrium.
\end{abstract}

\maketitle

\section{Introduction}\label{s:intro}

In this paper we study an optimization problem coming from the modeling of the behaviour of citizens who every day need to use some services (supermarkets, post offices\ldots) present in a city. We consider a bounded set $\Omega \subset \R^d$ which is the geographic reference for the city and a nonnegative function $f$ with unit integral representing the population density. We fix $k$ points $x_1$, $x_2$, \ldots, $x_k$ at which the services are located. If every service was able to answer immediately to the demand of each customer, obviously every person would choose the one closest to his home. Namely, being $p \geq 1$ a fixed number so that the time spent to cover a distance $\ell$ is given by $\ell^p$, then a citizen living at $x$ would choose the service located at $x_i$ if and only if
$$
| x-x_i |^p = \min_{j=1,\ldots,k} | x - x_j|^p \,.
$$

However, in real world, if a service is crowded because a certain amount of citizens have chosen it, the satisfaction of the demand of the customer is not immediate, and some time has to be spent waiting for it. This time spent in the queue surely depends on the amount of people waiting at the service, but also on the characteristics of the service (for instance, its dimension or the ability of the employees). This can be modeled using $k$ functions $h_1$, $h_2$, \ldots, $h_k$ which express the time to be waited in dependence of the amount of people in the queue. If the amount of citizens choosing the service $x_j$ is $c_j$, for $j = 1, \ldots, k$, then a citizen living at $x$ and going to the service $x_i$ needs to use a total amount of time expressed by the quantity
$$
| x-x_i |^p + h_i(c_i)
$$
in order to reach the service $x_i$ and to have his request fulfilled. In particular, the better service for the customer is not necessarily the closest one. It could be convenient for him to go a bit further away in order to be served in a bigger service, in which the queue is faster.

It is surely apparent from the above discussion that, in this modeling, the decision of each citizen depends on the choices of all the other citizens. Each customer chooses the best service according to the corresponding queue, but the queue itself depends on the choices of all the other customers. A possible attempt to tackle this question is to look for a global minimum. Let us denote by $A_j$ the subset of $\Omega$ consisting of the areas in which the citizens using the service $x_j$ live. The total cost, in term of time spent to be served, needed by the citizens of the whole city is expressed by
$$
\sum_{i=1}^k \int_{A_i} \left[| x-x_i |^p +h_i \left(\int_{A_i}f(x)\, dx \right) \right]  f(x)\, dx \,.
$$
If we agree that the choice of each citizen is done in order to minimize this total cost (for instance, if the major of the city has the right to force the choices of the customers, in order to make the global cost as small as possible), we are led to consider the following minimization problem:
\begin{equation}\label{e:intromin}
\inf_{\substack{(A_i)_{i=1,\ldots,k} \\ \text{partition of $\Omega$}}} \left\{ \sum_{i=1}^k \int_{A_i} \left[| x-x_i |^p +h_i \left(\int_{A_i}f(x)\, dx \right) \right]  f(x)\, dx \right\} \,.
\end{equation}
This type of minimization problem fits very well in the theory of optimal mass transportation. This theory goes back to Monge \cite{monge}, who considered the problem of moving a pile of sand into a hole of the same volume, minimizing the transportation cost. We briefly refer on the Monge problem, its relaxation due to Kantorovich (\cite{kantorovich1} and \cite{kantorovich2}) and the recent progresses on it in Section \ref{TransportSection}. The study of the minimization problem \eqref{e:intromin} is done in Section \ref{ECNS}. We define the concept of {\em optimum}, which is a partition $(A_i)_{i=1,\ldots,k}$ solving \eqref{e:intromin}, we prove results of existence and uniqueness of the optimum (under suitable assumptions on the functions $h_i$) and we give some characterizations of it.

However, this kind of solution of the problem is not very natural, from an economic point of view. Indeed, as pointed out in Example \ref{ex:jap}, it could happen that for some citizens the choice of the service forced by the need of producing an optimum for \eqref{e:intromin} is too expensive. It is more realistic to consider as a good solution of the problem a partition of $\Omega$ in which each citizen is ``satisfied'' with his own choice. By this we mean the following: the citizen looks at the behaviour of all the other citizens, and regarding it as fixed he decides whether his decision has been clever or not. In particular, a citizen living at $x$ is satisfied with his choice of going to the service located at $x_i$ if and only if
\begin{equation}\label{e:introgame}
| x-x_i |^p + h_i(c_i) = \min_{j = 1,\ldots, k} \left\{ | x-x_j |^p + h_j(c_j) \right\} \,,
\end{equation}
where the numbers $c_j$, as before, represent the amount of customers choosing the service $x_j$, for $j = 1,\ldots, k$, but now these quantities are seen as fixed by the citizen who evaluates the correctness of his decision. This is radically different from the previous minimization problem, as we explicitly remark in Section \ref{ss:compar}. The condition expressed in \eqref{e:introgame} paves the way for a connection with game theory (for which good references are \cite{friedman} and \cite{Aubin}): in Section \ref{Individual} we define an {\em equilibrium} for our problem as a partition in which every customer is satisfied with his choice, in the sense specified above. In classical game theory this corresponds to the so-called Nash equilibrium. We are able to prove, under suitable assumptions on the functions $h_i$, existence and uniqueness of an equilibrium, and we also comment on the main differences between equilibrium and optimum.

In Section \ref{s:dyna} we finally consider a dynamical evolution of the problem, looking for convergence to the equilibrium. The general question is the following. Assume that the citizens have some strategy to decide, day by day, the most convenient service, using data which come from the previous days. At each day, they decide where to go, considering the lengths of the queues that have been observed in the previous days. Does this iterative scheme converge to the equilibrium? In Example \ref{nonconv} we show that this is not the case, if the strategy of the customers is too na\"ive. If they just choose every day to minimize the right hand side of \eqref{e:introgame}, in which the time spent on the queues in the various services is simply the one observed the day before, then we could obtain some oscillatory phenomena, with no convergence. However, we propose two different strategies, quite natural at a social level, which lead to convergence to the equilibrium.

The first one (see Section \ref{ss:prudence}) is based on the concept of {\em prudence} (which, in some sense, could also be viewed as laziness or resistance to a change of habits). The decision that every citizen makes is again based on the information on the queues of the previous day, but it is now seen as a stochastic variable (in the language of optimal mass transportation this corresponds to the notion of transport plan, see again Section \ref{TransportSection}). It is ``too risky'' for the customer to change immediately the preferred service, thus we assume that he is only going to change his opinion, in the sense that he will be more tempted by the most convenient service, but he will not immediately ``fully choose'' it. The choice between two services of each customer is modeled by a function with values between $0$ and $1$: value $0$ means that the choice is completely in favour of the first service, while value $1$ means that the choice is completely in favour of the second service. Values between $0$ and $1$ mean that the customer chooses a mixed strategy (this is also common in game theory). This can also be interpreted in statistical terms: if at each point of the city there is a condominium instead of a single citizen, then each person will choose the service in such a way that the statistical distribution of the choices follows the stochastic variable defined above. Assuming that the citizens have ``enough prudence'', we are able to show that the iteration of the stochastic choices converges to a deterministic decision, which is an equilibrium for our problem.

The second strategy (see Section \ref{lastsubs}) relies on the notion of {\em memory}. We go back to a deterministic strategy, but we assume that each customer bases his decision not only on the informations coming from the previous day, but also remembering the queues that have been seen in a certain amount of days in the past. Then the choice for the new day is based on an average of all these informations. If the citizens have ``enough memory'', in the sense that they consider in their choice a sufficiently big amount of days, we are able to show that also this iteration converges to an equilibrium for the problem.

We close this introduction by relating our problem to some other optimization problems studied in the previous literature. The idea for our model came from some results relative to the so-called {\em location problem}, see \cite{BJR}, \cite{J} and \cite{BBSS}. In this problem, we are given a distribution of citizens $f$ in a city $\Omega$, but the location of the services is no more fixed. The question is to optimize the location of the services, in such a way that the transportation cost of the citizens onto the services is minimized. We observe that no queues are considered in this model. Two different approaches to this problem are possible. In the {\em long term planning} all the services are built at the same time, minimizing the average distance the people have to cover to reach the nearest service. In the {\em short term planning}, it is assumed that services are built one by one, minimizing the average distance step by step, taking into account at each stage the location of all the services opened at previous steps. Other related works, regarding the modeling of urban areas and the structure of cities, are \cite{BPSS}, \cite{BS},  \cite{CE2}, \cite{CE}, \cite{CS} and \cite{lucas}.

\medskip

{\bf Acknowledgment.} The authors warmly thank Luigi Ambrosio, Guy Bouchitt\'e, Guillaume Carlier, Fabio Priuli and Filippo Santambrogio for many valuable conversations on the subject of this work. Moreover, they also thank the Scuola Normale Superiore di Pisa and the Mathematics Department of the University of Pavia for the hospitality during the preparation of this paper.


\section{Notation and preliminary results on optimal mass transportation}\label{s:mass}

In this section we introduce the general setup of the paper and the main notation, we review some basic facts about optimal mass transportation (for which the reader can consult \cite{ambrosio} or \cite{villani}) and we establish some preliminary results which will be useful in the sequel. 

\subsection{Main notation of the paper} We fix a bounded Borel set $\Omega \subset \R^d$ and we denote by $\L^d$ the $d$-dimensional Lebesgue measure in $\R^d$. We consider $k$ points $x_1$, $x_2$, \ldots, $x_k$ belonging to $\Omega$, where $k$ is a strictly positive integer; these points represent the location of the services in the city. The density of the citizens is given by an absolutely continuous probability measure $\mu = \f$, where $f : \Omega \to \R$ is a nonnegative function with unit integral. The measure $\mu$ defined in this way will be typically the reference measure in $\Omega$, and we shall also write $f$-a.e.~and $f$-negligible as a shortening of the more precise notation $\mu$-a.e.~and $\mu$-negligible. For $i=1,\ldots,k$ we consider the functions $h_i : [0,1] \to [0,+\infty[$ which encode the amount of time to be waited in dependence of the amount of people using the service. With this we mean that, if the service located at $x_i$ is chosen by an amount $c_i$ of citizens, then the time that will be spent in the queue by each customer is $h_i(c_i)$. Notice that in this modeling we can also include some penalizations for the particular features of the various services, choosing the queue functions in such a way that $h_i(0)>0$: for instance, higher prices or lower quality of the products. We do not specify a priori any condition on the functions $h_i$, but we will rather clarify the assumptions needed in each particular result. For every Borel set $A \subset \Omega$ we consider the indicatrix function $\indic_A$ defined by 
$$
\indic_A (x) = 
\left\{\begin{array}{ll} 1 & \text{if $x \in A$} \\ 0 & \text{if $x \not \in A$.} \end{array} \right.
$$
The set of probability measures on $\Omega$ and $\Omega \times \Omega$ are denoted by $\Prob(\Omega)$ and $\Prob(\Omega \times \Omega)$ respectively. When $\gamma \in \Prob(\Omega)$ is given, we denote by ${\rm L}^1_\gamma (\Omega)$ the vector space consisting of the Borel functions which are $\gamma$-integrable. A partition of $\Omega$ is a finite or countable family $(A_i)_{i \in \N}$ of pairwise disjoint (up to $f$-negligible sets) Borel sets $A_i \subset \Omega$ such that 
$\cup_{i\in \N} A_i $ has full measure in $\Omega$.

\subsection{Optimal mass transportation}\label{TransportSection} In 1781 Monge \cite{monge} raised the problem of transporting a given distribution of mass (a pile of sand, for instance) into another one (a hole, for instance), in such a way that the total work done is minimal. In modern terms the problem can be stated as follows. Given two measures $\mu$, $\nu \in \Prob(\Omega)$ and $p \geq 1$ we consider the minimization problem
$$M_p(\mu,\nu) = \inf \left\{ \left(\int_{\Omega}| x -T(x)|^p \, d\mu(x)\right)^{1/p} \;\; : \;\; T : \Omega \to \Omega \text{ Borel and such that } T_\#  \mu=\nu\right\} \,.$$  
Here we denote by $T_\# \mu \in \Prob(\Omega)$ the push-forward of the measure $\mu$, defined by
$$
\left( T_\# \mu \right) (A) = \mu \big( T^{-1} (A) \big) \qquad \text{for every Borel set $A \subset \Omega$.}
$$
Any map $T$ which is admissible in the above problem is called a transport map from $\mu$ to $\nu$.

It is extremely difficult to attack the Monge problem, mainly due to the fact that it is highly nonlinear. This is why the following relaxed formulation, due to Kantorovich (\cite{kantorovich1} and \cite{kantorovich2}), is of great importance. We denote by $\Pi(\mu,\nu) \subset \Prob(\Omega \times \Omega)$ the set of the probability measures $\gamma$ in $\Omega \times \Omega$ with marginals $\mu$ and $\nu$, i.e.~such that $(\pi_1)_\# \gamma = \mu$ and $(\pi_2)_\# \gamma = \nu$, where we denote by $\pi_1$ the projection on the first component and by $\pi_2$ the projection on the second component. Every element of $\Pi(\mu,\nu)$ is called a transport plan from $\mu$ to $\nu$. We notice that $\Pi(\mu,\nu)$ is always nonempty, since for instance $\mu \otimes \nu \in \Pi(\mu,\nu)$. The relaxed formulation of the Monge problem can be stated as follows:
$$
W_p(\mu,\nu) = \inf \left\{\left(\int_{\Omega \times \Omega}| x-y |^p\, d\gamma(x,y)\right)^{1/p} \; : \; \gamma\in \Pi(\mu,\nu)\right\} \,. 
$$
By considering $\gamma = ({\rm id} \times T)_\# \mu$ for any given transport map $T$ we see that the Kantorovich problem is indeed a relaxation of the Monge problem. It turns out that $W_p$ is a distance on $\Prob(\Omega)$, which metrizes the weak convergence of measures (recall that we are assuming that $\Omega$ is bounded). This distance is called the Wasserstein distance of order $p$.

Existence and uniqueness results for the optimal mass transportation problem has been proved only very recently. We refer to \cite{Brenier} and \cite{ks} for the case $p=2$, to \cite{R} and \cite{GM} for the case $p>1$ and to \cite{CFM}, \cite{EG} and \cite{AP} for the case $p=1$. We summarize these results in the following theorem.

\begin{thm}[Existence and uniqueness of an optimal transport map]\label{exist}
Let $\mu$ and $\nu$ be probability measures in $\Omega$ and fix $p\geq 1$. We assume that $\mu$ is absolutely continuous with respect to the Lebesgue measure $\L^d$. Then $M_p(\mu,\nu) = W_p(\mu,\nu)$ and there exists an optimal transport map from $\mu$ to $\nu$, which is also unique $f$-a.e.~if $p>1$.
\end{thm}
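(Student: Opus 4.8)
The plan is to pass to the Kantorovich relaxation, produce an optimal plan by the direct method, extract a Kantorovich potential, and then treat separately the strictly convex regime $p>1$ (where the optimal plan is forced to be the graph of a map) and the degenerate case $p=1$.

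First I would observe that $\Pi(\mu,\nu)$ is nonempty and, since $\Omega$ is bounded, it is weakly-$*$ compact in $\Prob(\Omega\times\Omega)$, while $\gamma\mapsto\int_{\Omega\times\Omega}|x-y|^p\,d\gamma$ is weakly-$*$ continuous because the cost $c(x,y)=|x-y|^p$ is continuous and bounded on $\Omega\times\Omega$. By the direct method there is an optimal plan $\gamma_0\in\Pi(\mu,\nu)$ with $\big(\int|x-y|^p\,d\gamma_0\big)^{1/p}=W_p(\mu,\nu)$. A standard rearrangement argument (if the support were not $c$-cyclically monotone, moving a finite amount of mass would strictly decrease the cost) shows that $\spt\gamma_0$ is $c$-cyclically monotone, and this produces a $c$-concave Kantorovich potential $\varphi:\Omega\to\R$ with $\varphi(x)+\varphi^c(y)=c(x,y)$ on $\spt\gamma_0$ and $\varphi(z)+\varphi^c(y)\le c(z,y)$ for all $z$.

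For $p>1$ the function $w\mapsto|w|^p$ is of class $C^1$ and strictly convex, so its gradient is injective on $\R^d$; this is the point where absolute continuity of $\mu$ enters. Being $c$-concave on the bounded set $\Omega$, $\varphi$ is locally Lipschitz, hence differentiable $\L^d$-a.e.\ and therefore $\mu$-a.e. Fix a point $x$ of differentiability of $\varphi$: for every $y$ with $(x,y)\in\spt\gamma_0$ the function $z\mapsto|z-y|^p-\varphi(z)$ attains a minimum at $z=x$, hence $\nabla_x\big(|x-y|^p\big)=\nabla\varphi(x)$, and injectivity of $w\mapsto\nabla(|w|^p)$ forces $y$ to be uniquely determined by $x$. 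Thus $\gamma_0=(\mathrm{id}\times T)_\#\mu$ for a Borel map $T$, which is then admissible in the Monge problem with cost $W_p(\mu,\nu)$, so $M_p(\mu,\nu)\le W_p(\mu,\nu)$; since conversely each transport map $S$ gives a transport plan $(\mathrm{id}\times S)_\#\mu$, also $W_p(\mu,\nu)\le M_p(\mu,\nu)$, whence $M_p=W_p$ and $T$ is optimal. Uniqueness follows from convexity of the constraint: if $T_1,T_2$ are optimal maps then $\tfrac12\big((\mathrm{id}\times T_1)_\#\mu+(\mathrm{id}\times T_2)_\#\mu\big)$ is again an optimal plan, hence — by the argument just given — concentrated on the graph of a single map, which forces $T_1=T_2$ $\mu$-a.e.

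The case $p=1$ is the step I expect to be the main obstacle, since $|w|$ is not strictly convex: the displacement is no longer recovered from $\nabla\varphi(x)$, and optimal plans are in general neither unique nor induced by maps. Following \cite{CFM}, \cite{EG} and \cite{AP}, the strategy is to select a distinguished optimal plan — for instance the one minimizing the secondary strictly convex cost $\int|x-y|^2\,d\gamma$ among all $1$-optimal plans — and to exploit the geometry of the $1$-Lipschitz Kantorovich potential $\varphi$: mass is transported only along the maximal segments (``transport rays'') on which $\varphi$ is affine with slope one. On each such ray the problem is one-dimensional, where an atomless measure can be transported onto any target by a monotone map; one then disintegrates $\mu$ along the ray directions, uses absolute continuity of $\mu$ to guarantee that its conditional measures on the rays are atomless, and glues the one-dimensional maps into a globally Borel map $T$ with $T_\#\mu=\nu$ and $\int|x-T(x)|\,d\mu=W_1(\mu,\nu)$. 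This again yields $M_1(\mu,\nu)=W_1(\mu,\nu)$, while no uniqueness is asserted — and none holds — for $p=1$.
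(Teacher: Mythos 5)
The paper does not actually prove Theorem \ref{exist}: it is stated explicitly as a summary of results from the literature, with \cite{Brenier} and \cite{ks} cited for $p=2$, \cite{R} and \cite{GM} for $p>1$, and \cite{CFM}, \cite{EG}, \cite{AP} for $p=1$. Your outline is therefore not in competition with any argument in the paper; rather, it correctly reproduces the standard proofs contained in exactly those references: the direct method and $c$-cyclical monotonicity for the Kantorovich problem, the Gangbo--McCann differentiation of the $c$-concave potential for $p>1$ (where injectivity of $w\mapsto\nabla(|w|^p)$ and Rademacher's theorem combine with $\mu\ll\L^d$ to force the optimal plan onto a graph, and the averaging trick gives uniqueness), and the transport-ray decomposition with a secondary variational selection for $p=1$. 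The one place where your sketch glosses over the genuinely hard point is the sentence ``uses absolute continuity of $\mu$ to guarantee that its conditional measures on the rays are atomless'': this implication is precisely the gap in Sudakov's original argument, since absolute continuity of $\mu$ in $\R^d$ does not by itself control the one-dimensional conditional measures produced by an arbitrary disintegration; one needs either the countable Lipschitz regularity of the ray directions established in \cite{AP} or the approximation by strictly convex costs of \cite{CFM}. Since you cite those references for exactly this step, the proposal is acceptable as a proof outline and is consistent with the paper's purely bibliographical treatment of the theorem.
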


\begin{rem}\label{DifToDisc}
{\rm In the particular case when $\mu \ll \L^d$ and $\nu=\sum_{i \in \N} b_i \delta_{y_i}$ any transport map $T$ is associated to a partition $(B_i)_{i \in \N}$ of $\Omega$ in such a way that 
$$
T(x) = \sum_{i \in \N} y_i\indic_{B_i}(x) \qquad \text{and} \qquad \mu(B_i)=b_i \,.
$$
Conversely, any partition $(B_i)_{i \in \N}$ of $\Omega$ satisfying $\mu(B_i)=b_i$ corresponds to a transport map of the form above.}
\end{rem}

In the situation of the above remark any transport plan can be written as
\begin{equation}\label{defpsi}
\gamma = \sum_{i\in\N} (\psi^i \mu) \otimes \delta_{y_i}
\end{equation}
where the functions $\psi^i : \Omega \to \R^+$ are such that $\sum_{i \in \N} \psi^i(x)=1$ for each $x\in\Omega$ and $\int_\Omega \psi^i\,d\mu=b_i$. With this language $\gamma$ corresponds to a transport map if and only if the functions $\psi^i$ have only the values $0$ and $1$.

Another useful feature of Kantorovich's relaxation is the fact that this new problem admits a dual formulation. 

\begin{thm}[Dual formulation]\label{t:dual}
For $\mu$, $\nu \in \Prob(\Omega)$ the following equality holds:
\[
W^p_p(\mu,\nu) =
\sup \left\{
\int_\Omega u \, d\mu + \int_\Omega v \, d\nu \; : \; 
\begin{array}{c}
u \in {\rm L}^1_\mu(\Omega) \,, \; 
v \in {\rm L}^1_\nu(\Omega) \text{ and } \\
u(x)+v(y) \leq | x-y |^p \text{ for $\mu$-a.e.~$x$ and $\nu$-a.e.~$y$}
\end{array} \right\} \,.
\]
Moreover, there exists an optimal pair $(u,v)$ for this dual formulation.
\end{thm}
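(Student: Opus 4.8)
The plan is to prove the inequality $W_p^p(\mu,\nu)\ge S$, where $S$ denotes the supremum appearing in the statement, by a short direct argument, and the reverse inequality $W_p^p(\mu,\nu)\le S$ together with the existence of an optimal pair by passing through an optimal transport plan. For the direct part, let $(u,v)$ be any admissible pair for the dual problem and let $\gamma\in\Pi(\mu,\nu)$. If $N_1$ is a $\mu$-negligible set and $N_2$ a $\nu$-negligible set such that $u(x)+v(y)\le|x-y|^p$ whenever $x\notin N_1$ and $y\notin N_2$, then $\gamma(N_1\times\Omega)=\mu(N_1)=0$ and $\gamma(\Omega\times N_2)=\nu(N_2)=0$, so this inequality holds for $\gamma$-a.e.~$(x,y)$; integrating it against $\gamma$ and using the marginal conditions,
\[
\int_\Omega u\,d\mu+\int_\Omega v\,d\nu=\int_{\Omega\times\Omega}\big(u(x)+v(y)\big)\,d\gamma\le\int_{\Omega\times\Omega}|x-y|^p\,d\gamma\,.
\]
Since $W_p^p(\mu,\nu)=\inf_{\gamma\in\Pi(\mu,\nu)}\int|x-y|^p\,d\gamma$, taking the infimum over $\gamma$ and then the supremum over $(u,v)$ gives $S\le W_p^p(\mu,\nu)$.

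For the reverse inequality and the attainment, set $c(x,y):=|x-y|^p$. First I would note that an optimal plan exists: $\Pi(\mu,\nu)$ is a weakly-$*$ closed subset of the weakly-$*$ compact set of probability measures on $\OM\times\OM$, hence is itself weakly-$*$ compact, while $\gamma\mapsto\int c\,d\gamma$ is weakly-$*$ continuous because $c$ is bounded and continuous on $\OM\times\OM$; therefore $W_p^p(\mu,\nu)=\int c\,d\gamma_0$ for some $\gamma_0\in\Pi(\mu,\nu)$. Then I would show that $\spt\gamma_0$ is $c$-cyclically monotone: otherwise there would exist finitely many points $(x_1,y_1),\dots,(x_m,y_m)\in\spt\gamma_0$ and a permutation $\sigma$ with $\sum_i c(x_i,y_{\sigma(i)})<\sum_i c(x_i,y_i)$; by continuity of $c$ this strict inequality persists on small balls around these points, and rearranging on these balls an equal small amount of $\gamma_0$-mass according to $\sigma$ produces a competitor in $\Pi(\mu,\nu)$ of strictly smaller cost, contradicting the optimality of $\gamma_0$.

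Once $c$-cyclical monotonicity is available I would run R\"uschendorf's construction (see \cite{ambrosio,villani}): fixing a base point $(x_0,y_0)\in\spt\gamma_0$, define
\[
u(x):=\inf\left\{\sum_{i=1}^{n}\big(c(x_i,y_{i-1})-c(x_i,y_i)\big)+c(x,y_n)-c(x_0,y_0)\;:\;n\ge1,\ (x_1,y_1),\dots,(x_n,y_n)\in\spt\gamma_0\right\}
\]
and $v:=u^c$, i.e.~$v(y):=\inf_{x\in\Omega}\big(c(x,y)-u(x)\big)$. Because $\spt\gamma_0$ is $c$-cyclically monotone and $c$ is bounded on $\OM\times\OM$, the infimum defining $u$ is finite and $u$ is bounded; being an infimum of functions of the form $x\mapsto c(x,y_n)+\text{const}$, and $x\mapsto|x-y|^p$ being Lipschitz on $\OM$ uniformly in $y$ (with constant $p\,D^{p-1}$, where $D$ is the diameter of $\Omega$), both $u$ and $v$ are Lipschitz, hence Borel and bounded, hence they lie in ${\rm L}^1_\mu(\Omega)$ and ${\rm L}^1_\nu(\Omega)$ respectively. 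By the definition of $v$ one has $u(x)+v(y)\le c(x,y)$ for all $(x,y)$, so $(u,v)$ is admissible; while the definition of $u$ forces $u(x)+v(y)=c(x,y)$ at every $(x,y)\in\spt\gamma_0$ (given $(\bar x,\bar y)\in\spt\gamma_0$, appending it to an almost-optimal chain for $u(\bar x)$ yields $u(x)\le u(\bar x)+c(x,\bar y)-c(\bar x,\bar y)$ for every $x$, whence $v(\bar y)\ge c(\bar x,\bar y)-u(\bar x)$, the reverse inequality being immediate). Integrating this equality, valid $\gamma_0$-a.e., against $\gamma_0$ and using the marginals,
\[
\int_\Omega u\,d\mu+\int_\Omega v\,d\nu=\int_{\Omega\times\Omega}\big(u(x)+v(y)\big)\,d\gamma_0=\int_{\Omega\times\Omega}c(x,y)\,d\gamma_0=W_p^p(\mu,\nu)\,;
\]
together with the first paragraph this gives $W_p^p(\mu,\nu)=S$ and shows that $(u,v)$ realizes the supremum.

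The main obstacle lies in the two ``standard but technical'' facts about $c$-cyclical monotonicity --- that optimality of $\gamma_0$ forces it (the mass-rearrangement argument) and that it in turn forces the equality $u+v=c$ on $\spt\gamma_0$ --- together with the finiteness and measurability checks in R\"uschendorf's construction, which is exactly the point where the boundedness of $\Omega$, hence of $c$, is needed. As an alternative for the duality equality alone (without the attainment statement), one can invoke the Fenchel--Rockafellar theorem in $C(\OM\times\OM)$, and then recover the attainment by extracting, via Ascoli--Arzel\`a, a uniformly convergent subsequence from a maximizing sequence of $c$-concave potentials, which are equi-Lipschitz by the same Lipschitz bound on $c$.
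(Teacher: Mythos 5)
Your argument is correct and complete in outline: weak duality by integrating the constraint against an arbitrary plan, existence of an optimal plan by weak-$*$ compactness, $c$-cyclical monotonicity of its support, and the R\"uschendorf potential construction giving an admissible pair that saturates the cost, with the boundedness of $\Omega$ correctly identified as what makes the potentials finite, Lipschitz and integrable. The paper itself gives no proof of Theorem \ref{t:dual} --- it is quoted as a standard fact with references to \cite{ambrosio} and \cite{villani} --- and your proof is precisely the standard argument found there, so there is nothing to compare beyond noting that you have supplied in full what the paper delegates to the literature.
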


In the particular case when $\nu=\sum_{i\in \N} b_i \delta_{y_i}$ is an atomic measure the dual formulation reads as:
\begin{equation}\label{dual}
W^p_p \left(\mu,\sum_{i\in \N} b_i \delta_{y_i} \right) =
\sup \left\{
\int_\Omega u \, d\mu + \sum_{i \in \N} b_i v(y_i) \; : \; 
\begin{array}{c}
u \in {\rm L}^1_\mu(\Omega) \,, \; 
v \in {\rm L}^1_\nu(\Omega) \text{ and } \\
u(x)+v(y_i) \leq | x-y_i |^p \\
\text{ for $\mu$-a.e.~$x$ and every $i \in \N$}
\end{array} \right\} \,.
\end{equation}

\subsection{A preliminary result on the shape of partitions}

We present in this subsection a result of standard flavour relative to the shape of the sets in the partition determined by the optimal transport of a diffuse measure to an atomic measure.

\begin{proposition}\label{cellShape} Let $f : \Omega \to \R$ be a nonnegative function such that $\mu = \f$ is a probability measure, and let  $(y_i)_{i\in\N}$ be a sequence of points belonging to $\Omega$. \\
\indent (i) Let $\nu=\sum_{i\in \N} b_i\delta_{y_i}$ and $(B_i)_{i \in \N}$ be a partition of $\Omega$ such that 
the map $T(x) = \sum_{i \in \N} y_i \indic_{B_i} (x)$ is an optimal transport map from $\mu$ to $\nu$. Let moreover the pair $(u,v)$ be any solution of the dual formulation \eqref{dual}. Then we have
\begin{equation}\label{Bouchi}
u(x) = \inf_{i \in \N} \left\{| x-y_i|^p - v(y_i) \right\} = 
\sum_{i \in \N} \left(| x-y_i|^p - v(y_i) \right )\indic_{B_i}(x) \qquad \text{for $f$-a.e.~$x \in \Omega$.}
\end{equation}
\indent (ii) Let  $(B_i)_{i\in\N}$ be a partition of $\Omega$ and set $b_i = \int_{B_i} f(x) \, dx$, $\nu=\sum_{i\in \N} b_i\delta_{y_i}$ and $T(x) = \sum_{i \in \N} y_i \indic_{B_i} (x)$. Let moreover $u \in {\rm L}^1_\mu (\Omega)$ and $v\in{\rm L}^1_\nu(\Omega)$ be two functions (if any) satisfying condition \eqref{Bouchi}. Then we have that $T$ is optimal for $M_p(\mu,\nu)$ and that the pair $(u,v)$ is optimal for the dual formulation \eqref{dual}.
\end{proposition}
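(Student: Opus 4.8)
The plan is to exploit the absence of a duality gap and the existence of optimizers guaranteed by Theorems~\ref{exist} and~\ref{t:dual} (recall that $\mu\ll\L^d$), turning the optimality of $T$ and of $(u,v)$ into the two pointwise identities in \eqref{Bouchi}, and conversely.

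\emph{Part (i).} Since $(u,v)$ is admissible in \eqref{dual}, the constraint $u(x)+v(y_i)\le|x-y_i|^p$ gives $u(x)\le|x-y_i|^p-v(y_i)$ for every $i$ and $\mu$-a.e.~$x$, hence for $f$-a.e.~$x$
$$
u(x)\ \le\ \inf_{i\in\N}\bigl\{|x-y_i|^p-v(y_i)\bigr\}\ \le\ \sum_{i\in\N}\bigl(|x-y_i|^p-v(y_i)\bigr)\indic_{B_i}(x)\ =:\ s(x),
$$
the last inequality because on $B_i$ the sum reduces to its $i$-th term, which dominates the infimum. I would first note that $s\in{\rm L}^1_\mu(\Omega)$: indeed $\Omega$ is bounded, so $x\mapsto|x-y_i|^p$ is uniformly bounded, and $\sum_i|v(y_i)|\,\mu(B_i)=\sum_i|v(y_i)|\,b_i<+\infty$ because $v\in{\rm L}^1_\nu(\Omega)$ (here $\mu(B_i)=b_i$, since $T_\#\mu=\nu$). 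Integrating the chain above and using $T(x)=y_i$ on $B_i$,
$$
\int_\Omega s\,d\mu=\sum_i\int_{B_i}\bigl(|x-y_i|^p-v(y_i)\bigr)f\,dx=\int_\Omega|x-T(x)|^p\,d\mu-\sum_i b_i v(y_i),
$$
while optimality of $T$ together with $M_p=W_p$ (Theorem~\ref{exist}) and optimality of $(u,v)$ (Theorem~\ref{t:dual}) give $\int_\Omega|x-T(x)|^p\,d\mu=W^p_p(\mu,\nu)=\int_\Omega u\,d\mu+\sum_i b_i v(y_i)$. Comparing the two, $\int_\Omega s\,d\mu=\int_\Omega u\,d\mu$; since $s-u\ge0$ $\mu$-a.e.\ this forces $s=u$ $f$-a.e., and together with $u\le\inf_i\{\cdots\}\le s$ it yields the two equalities in \eqref{Bouchi}.

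\emph{Part (ii).} Here I would argue by comparison, using only weak duality. First, $(u,v)$ is admissible in \eqref{dual}: $u\in{\rm L}^1_\mu(\Omega)$ and $v\in{\rm L}^1_\nu(\Omega)$ by hypothesis, and $u(x)=\inf_i\{|x-y_i|^p-v(y_i)\}$ gives $u(x)+v(y_i)\le|x-y_i|^p$ for $\mu$-a.e.~$x$ and every $i$. Moreover $T_\#\mu=\sum_i b_i\delta_{y_i}=\nu$, since $(B_i)$ is a partition with $\mu(B_i)=b_i$, so $T$ is an admissible transport map for $M_p(\mu,\nu)$. Using the second identity in \eqref{Bouchi} I would run the computation of Part (i) (now with $u=s$) to get
$$
\int_\Omega u\,d\mu+\sum_i b_i v(y_i)=\sum_i\int_{B_i}|x-y_i|^p f\,dx=\int_\Omega|x-T(x)|^p\,d\mu,
$$
i.e.\ the dual value of $(u,v)$ equals the cost of $T$. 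Since $\int_{\Omega\times\Omega}|x-y|^p\,d\gamma\ge\int_\Omega u\,d\mu+\int_\Omega v\,d\nu$ for every $\gamma\in\Pi(\mu,\nu)$ (just integrate the constraint against $\gamma$), taking the infimum over $\gamma$ gives $W^p_p(\mu,\nu)\ge\int_\Omega u\,d\mu+\sum_i b_i v(y_i)$; and $W_p=M_p$ by Theorem~\ref{exist}, while $T$ is an admissible transport map, so $W^p_p(\mu,\nu)\le\int_\Omega|x-T(x)|^p\,d\mu$. Chaining,
$$
\int_\Omega|x-T(x)|^p\,d\mu=\int_\Omega u\,d\mu+\sum_i b_i v(y_i)\ \le\ W^p_p(\mu,\nu)\ \le\ \int_\Omega|x-T(x)|^p\,d\mu,
$$
so all the quantities coincide: $T$ is optimal for $M_p(\mu,\nu)$ and $(u,v)$ is optimal for \eqref{dual}.

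\emph{Main obstacle.} There is no genuine difficulty here: the proof is a piece of duality bookkeeping. The points requiring some care are the integrability check $s\in{\rm L}^1_\mu(\Omega)$ (where boundedness of $\Omega$ and $v\in{\rm L}^1_\nu(\Omega)$ are used) and the asymmetry between the two parts — Part (i) really uses the no-gap statement and the existence of optimizers of Theorems~\ref{exist} and~\ref{t:dual}, whereas Part (ii) only uses the elementary weak-duality inequality. A harmless nuisance is that some of the points $y_i$ may coincide, which affects none of the displayed identities and can be handled by reindexing the atoms of $\nu$.
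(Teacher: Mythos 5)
Your proposal is correct and follows essentially the same route as the paper: in part (i) both arguments combine the pointwise dual constraint with the equality of the primal and dual optimal values (Theorems~\ref{exist} and~\ref{t:dual}) to upgrade the a.e.\ inequality $u\le s$ to an a.e.\ equality, and in part (ii) both show that the cost of $T$ equals the dual value of the admissible pair $(u,v)$ and conclude by (weak) duality. Your explicit integrability check for $s$ and the remark on possibly coinciding atoms are minor refinements the paper leaves implicit.
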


\begin{rem}{\rm We easily see that condition \eqref{Bouchi} implies that for every $i \in \N$ the following equality, intended up to $f$-negligible sets, holds:
\begin{equation}\label{Bouchi2}
B_i = \left\{ x \in \Omega \; : \;  | x-y_i|^p -v(y_i) < | x-y_j|^p -v(y_j) \quad \forall j \not= i \right\} \,.
\end{equation}
This equality precisely describes the shape of the sets in the partition. For instance, in the case $p=1$ the boundaries of the sets $B_i$ are hyperboloids and in the case $p=2$ the cells $B_i$ are polytopes. }
\end{rem}

From the above discussion we easily deduce the following corollary, regarding the uniqueness of the optimal transport map from an absolutely continuous measure to an atomic measure, which will be useful in the sequel.

\begin{corollary}\label{c:uniqdir}
Assume that $\mu \ll \L^d$ and $\nu=\sum_{i\in \N} b_i\delta_{y_i}$ are probability measures in $\Omega$. Then the optimal transport map from $\mu$ to $\nu$ is unique $f$-a.e.~even in the case $p=1$.
\end{corollary}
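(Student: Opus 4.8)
The plan is to prove that any two optimal transport maps $T_1$ and $T_2$ from $\mu$ to $\nu$ coincide $f$-almost everywhere; this gives the asserted uniqueness, the existence being already guaranteed by Theorem \ref{exist}. Since $\nu$ is atomic, by Remark \ref{DifToDisc} the two maps correspond to partitions $(B^1_i)_{i\in\N}$ and $(B^2_i)_{i\in\N}$ of $\Omega$ with $\mu(B^1_i)=\mu(B^2_i)=b_i$, in the sense that $T_\ell(x)=\sum_{i\in\N} y_i\indic_{B^\ell_i}(x)$ for $\ell=1,2$.

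The crucial point is that in Proposition \ref{cellShape}(i) the dual pair $(u,v)$ is allowed to be \emph{any} solution of \eqref{dual}, not one specifically attached to the given transport map. So I would first fix, once and for all, a single optimal pair $(u,v)$ for \eqref{dual}, whose existence follows from Theorem \ref{t:dual}. Applying Proposition \ref{cellShape}(i) to the optimal map $T_1$ together with this $(u,v)$, and then to the optimal map $T_2$ together with the \emph{same} $(u,v)$, I obtain that \eqref{Bouchi} holds, with this common $u$ and $v$, both for the partition $(B^1_i)$ and for the partition $(B^2_i)$. By the Remark following Proposition \ref{cellShape}, identity \eqref{Bouchi} forces, via \eqref{Bouchi2}, each of the two partitions to coincide up to $f$-negligible sets with the single partition $\{B_i\}_{i\in\N}$ given by $B_i=\{x\in\Omega : |x-y_i|^p-v(y_i)<|x-y_j|^p-v(y_j)\ \ \forall\, j\neq i\}$, which is manifestly determined by $v$ alone and is independent of the chosen transport map. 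Hence $B^1_i=B^2_i$ up to $f$-negligible sets for every $i$, and therefore $T_1=T_2$ $f$-a.e., which is the claim.

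The only place where the hypotheses $p\geq 1$ and $\mu\ll\L^d$ are really used — and the only step where I expect a genuine (if short) argument to be needed — is the fact underlying \eqref{Bouchi2}, namely that for $i\neq j$ the tie set $\{x : |x-y_i|^p-v(y_i)=|x-y_j|^p-v(y_j)\}$ is $f$-negligible, so that \eqref{Bouchi2} really does describe a partition up to $f$-negligible sets. I would argue that $g_{ij}(x):=|x-y_i|^p-|x-y_j|^p$ is real-analytic on $\R^d\setminus\{y_i,y_j\}$ and nonconstant (for instance it vanishes on the perpendicular bisector of the segment joining $y_i$ and $y_j$ but is negative at $y_i$, and the atoms are distinct), so each of its level sets has zero Lebesgue measure; the countable union over the pairs $i\neq j$ is then $\L^d$-negligible, hence $\mu$-negligible because $\mu\ll\L^d$. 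In the case $p=1$ this is simply the elementary observation that these level sets are branches of hyperboloids of revolution, as already pointed out in the Remark. With this in hand the argument above is complete, and since nothing in it distinguishes $p=1$ from $p>1$, it covers in particular the case $p=1$ that is not addressed by Theorem \ref{exist}.
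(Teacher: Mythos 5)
Your proof is correct and follows essentially the same route as the paper: fix one optimal dual pair $(u,v)$, apply Proposition \ref{cellShape}(i) to any optimal map with that pair, and conclude from \eqref{Bouchi2} that the associated partition is determined by $v$ alone. The only addition is your explicit verification that the tie sets $\{|x-y_i|^p-v(y_i)=|x-y_j|^p-v(y_j)\}$ are $\L^d$-negligible, a point the paper leaves implicit.
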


\begin{proof}
The existence of an optimal transport map is ensured by Theorem \ref{exist}. The point is to show that in this case we also have uniqueness. We fix an optimal couple $(u,v)$ for the dual formulation of the Kantorovich problem \eqref{dual} and we consider an optimal transport map $T$. Recalling that, from Remark \ref{DifToDisc}, every transport map is associated to a partition of $\Omega$, we find a partition $(B_i)_{i \in \N}$ of $\Omega$ in such a way that $T(x) = \sum_{i \in \N} y_i\indic_{B_i}(x)$. But recalling \eqref{Bouchi2} we deduce that the sets $(B_i)_{i \in \N}$ are uniquely determined (up to $f$-negligible sets) by $v$, thus we conclude that the optimal transport map $T$ is unique.
\end{proof}

We close this section proving Proposition \ref{cellShape}.

\begin{proof}[Proof of Proposition \ref{cellShape}]
(i) Using Theorem \ref{t:dual} an the fact that the transport $T$ and the pair $(u,v)$ are optimal we deduce that (see Remark \ref{DifToDisc})
$$ \int_{\Omega} \sum_{i \in \N} \indic_{B_i}(x) | x-y_i|^p f(x) \, dx
= W_p^p(\f, \nu) = \int_\Omega u(x) f(x) \, dx + \sum_{i \in \N} b_i v(y_i)\,.$$
Since $T$ is a transport map from $\mu$ to $\nu$ we obtain
$$ \int_{\Omega}\sum_{i \in \N} \indic_{B_i}(x) | x-y_i|^p f(x) \, dx
= \int_{\Omega}\sum_{i \in \N} \indic_{B_i}(x)( u(x)+v(y_i)) f(x) \, dx \,.$$
Then, recalling that in the dual formulation \eqref{dual} the pair $(u,v)$ is subject the the constraint $u(x) + v(y) \leq |x-y|^p$ for $\mu$-a.e.~$x$ and $\nu$-a.e.~$y$, we get
$$ \sum_{i \in \N} \indic_{B_i}(x) | x-y_i|^p = \sum_{i \in \N} \indic_{B_i}(x)
(u(x)+v(y_i)) \qquad \text{for $f$-a.e.~$x \in \Omega$.} $$
This in particular implies
\begin{equation}\label{e:par} 
u(x)= \sum_{i \in \N} (| x-y_i|^p - v(y_i)) \indic_{B_i}(x) \qquad \text{for $f$-a.e.~$x \in \Omega$.} 
\end{equation}
Using again the constraint of the dual formulation \eqref{dual} we immediately deduce \eqref{Bouchi} from \eqref{e:par}.

(ii) First of all we notice that $T$ is a transport map from $\f$ to $\nu = \sum_{i \in \N} b_i \delta_{y_i}$ and that the pair $(u,v)$ is admissible for the dual formulation \eqref{dual}. Moreover using condition 
\eqref{Bouchi} we obtain
$$
\begin{aligned}
\int_{\Omega} | x-T(x)|^p f(x) \, dx =& \sum_{i \in \N} \int_{B_i} | x-y_i|^p f(x) \, dx 
= \sum_{i \in \N} \int_{B_i} \left[ v(y_i)+u(x) \right] f(x) \, dx \\
=& \int_\Omega v(x) \, d\nu(y) + \int_\Omega u(x) f(x) \, dx \,. 
\end{aligned} 
$$
Recalling Theorem \ref{t:dual} we obtain the validity of (ii).
\end{proof}
%


\section{Existence, uniqueness and characterization of the optimum}\label{ECNS}

\subsection{Formulation of the  problem and definition of the optimum} In this section we consider our problem from a global point of view. The unknown is a partition $(A_i)_{i=1,\ldots,k}$ of $\Omega$, where each $A_i$ represents the urban area where customers choosing the service located at $x_i$ live. The total amount of people living in $A_i$ is given by $c_i = \int_{A_i} f(x) \, dx$. Every citizen living at $x$ and using the service located at $x_i$ is going to spend, in order to be served, a time given by
$$
| x-x_i |^p + h_i(c_i) \,.
$$
In this expression, $| x-x_i |^p$ is the transport time and $h_i(c_i)$ is the time spent waiting in the queue. Consequently, the total time spent by the whole population is given by
\begin{equation}\label{e:tottime} 
\sum_{i=1}^k \int_{A_i} \Big[ | x-x_i |^p + h_i(c_i) \Big] f(x) \, dx \,.
\end{equation}
We want to choose the partition $(A_i)_{i=1,\ldots,k}$ of $\Omega$ in such a way that the quantity in \eqref{e:tottime} is as small as possible; thus we are led to the following minimization problem:
\begin{equation}\label{e:minop}
\inf_{\substack{(A_i)_{i=1,\ldots,k} \\ \text{partition of $\Omega$}}}
\left\{\sum_{i=1}^k \int_{A_i} \left[| x-x_i|^p\ +\ h_i\left(\int_{A_i}f(x)\ dx\right)\right]\ f(x)\ dx \right\}\,.
\end{equation}

\begin{definition}[Optimum]
We say that a partition $(A_i)_{i=1,\ldots,k}$ of $\Omega$ is an {\em optimum} if it is a minimizer for \eqref{e:minop}.
\end{definition}

\subsection{Preliminary considerations} In this subsection we prove two lemmas which will be used in the following. We denote by $S$ the unit simplex in $\R^k$ defined by
$$
S = \left\{ c = (c_1,c_2,\ldots,c_k) \in \R^k \; : \; c_i\geq 0\,,\; \sum_{i=1}^k c_i=1 \right\} \,.
$$ 

\begin{lemma}\label{reform}
The following equality holds:
\[
\inf \eqref{e:minop} = 
\inf_{c \in S} \left\{W_p^p \left(f \L^d \res \Omega, \sum_{i=1}^k c_i \delta_{x_i} \right) + 
\sum_{i=1}^k h_i(c_i)c_i \right\} \,.
\]
\end{lemma}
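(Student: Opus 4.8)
The plan is to show the two infima coincide by proving each is bounded above by the other. The key observation is that the objective in \eqref{e:minop} depends on the partition $(A_i)$ only through two quantities: the transport cost $\sum_i \int_{A_i} |x-x_i|^p f(x)\,dx$ and the vector $c = (c_1,\ldots,c_k) \in S$ given by $c_i = \int_{A_i} f(x)\,dx$. Indeed, for a fixed partition the queue term is simply $\sum_i h_i(c_i) \int_{A_i} f(x)\,dx = \sum_i h_i(c_i) c_i$, which is exactly the second summand in the right-hand side.

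For the inequality $\inf\eqref{e:minop} \geq \inf_{c\in S}\{\cdots\}$: take any partition $(A_i)$ and let $c_i = \int_{A_i} f\,dx$, so $c \in S$. The map $T(x) = \sum_i x_i \indic_{A_i}(x)$ is, by Remark \ref{DifToDisc}, a transport map from $\mu = \f$ to $\nu = \sum_i c_i \delta_{x_i}$, hence $\sum_i \int_{A_i} |x-x_i|^p f(x)\,dx = \int_\Omega |x - T(x)|^p\,d\mu \geq M_p(\mu,\nu)^p = W_p^p(\mu,\nu)$ by Theorem \ref{exist}. Adding $\sum_i h_i(c_i)c_i$ to both sides, the value of \eqref{e:minop} at $(A_i)$ is at least $W_p^p(\mu, \sum_i c_i\delta_{x_i}) + \sum_i h_i(c_i)c_i \geq \inf_{c\in S}\{\cdots\}$. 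Taking the infimum over partitions gives the claim.

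For the reverse inequality: fix any $c \in S$. By Theorem \ref{exist} there is an optimal transport map $T$ from $\mu$ to $\sum_i c_i\delta_{x_i}$ (applying the remark that $\nu$ is atomic; Corollary \ref{c:uniqdir} even gives uniqueness, though we only need existence here), and by Remark \ref{DifToDisc} this $T$ corresponds to a partition $(A_i)$ with $\mu(A_i) = c_i$ and $T = \sum_i x_i \indic_{A_i}$. This partition is admissible in \eqref{e:minop}, and its cost equals $\int_\Omega |x-T(x)|^p\,d\mu + \sum_i h_i(c_i)c_i = W_p^p(\mu, \sum_i c_i\delta_{x_i}) + \sum_i h_i(c_i)c_i$ since $\int_{A_i} f\,dx = c_i$. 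Hence $\inf\eqref{e:minop}$ is at most this quantity, and taking the infimum over $c \in S$ yields $\inf\eqref{e:minop} \leq \inf_{c\in S}\{\cdots\}$. Combining the two inequalities completes the proof.

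There is essentially no hard obstacle here; the statement is a bookkeeping reformulation that disentangles the transport part from the queue part. The only point requiring a little care is making sure that every $c \in S$ is actually realized by some admissible partition — this is exactly what the existence of an optimal transport map to the atomic measure $\sum_i c_i\delta_{x_i}$ guarantees (and one should note that components $c_i = 0$ cause no trouble, as then $A_i$ is $f$-negligible and contributes nothing). One should also remark that the infimum on the right is over the compact set $S$ but need not be attained unless one knows continuity of $c \mapsto W_p^p(\mu,\sum_i c_i\delta_{x_i})$ and of the $h_i$; however, the lemma as stated only concerns the equality of infima, so this is not needed at this stage.
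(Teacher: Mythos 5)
Your proposal is correct and follows essentially the same route as the paper: both rest on the identity $M_p=W_p$ and the existence of an optimal map from Theorem \ref{exist}, together with the correspondence between transport maps onto atomic measures and partitions with prescribed masses from Remark \ref{DifToDisc}. The only difference is presentational --- you split the argument into two inequalities, while the paper writes it as a single chain of equalities of infima.
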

\begin{proof}
By applying Theorem \ref{exist} we have
\begin{eqnarray}
&&  \hspace{-0.7cm}\phantom{=} \inf_{c \in S} \left\{ W_p^p \left( f\L^d\res\Omega, \sum_{i=1}^k c_i\delta_{x_i} \right) 
+ \sum_{i=1}^k h_i(c_i)c_i \right\}  \nonumber \\
&& \hspace{-0.7cm}= \inf_{c \in S} \left\{ M_p^p \left(f \L^d \res\Omega, \sum_{i=1}^k c_i \delta_{x_i} \right) 
+ \sum_{i=1}^k h_i(c_i)c_i \right\} \label{dep} \\
&& \hspace{-0.7cm}=  \inf_{c \in S}  \left\{  \int_\Omega | x-T(x)|^p f(x)\, dx + \sum_{i=1}^k h_i(c_i)c_i 
\, : \, \text{$T$ transport map from $f \L^d \res \Omega$ to $\sum_{i=1}^k c_i \delta_{x_i}$} \right\} \,. \nonumber
\end{eqnarray}
But recalling Remark \ref{DifToDisc} we can deduce
\[
\inf \eqref{dep}
= \inf_{\substack{c \in S \\ (A_i)_{i=1,\ldots,k} \\ \text{partition of $\Omega$}}}
\left\{ \sum_{i=1}^k \int_{A_i} |x-x_i|^p f(x) \, dx + \sum_{i=1}^k h_i(c_i ) c_i \; : \;
\int_{A_i} f(x) \, dx = c_i \right\} 
= \inf \eqref{e:minop} \,,
\]
thus we obtain the thesis.
\end{proof}

\begin{lemma}\label{lemmaF}
The function $F : S \to \R$ defined by
$$
F(c_1,\ldots,c_k) = W_p^p \left( f \L^d \res \Omega , \sum_{i=1}^k c_i \delta_{x_i} \right)
$$
is continuous and convex.
\end{lemma}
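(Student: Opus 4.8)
The plan is to obtain convexity directly from the dual formulation of the Kantorovich problem, and continuity from the fact that $W_p$ metrizes the weak convergence of measures.

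\emph{Convexity.} For $c=(c_1,\ldots,c_k)\in S$ set $\nu_c=\sum_{i=1}^k c_i\delta_{x_i}$, so that $F(c)=W_p^p(\mu,\nu_c)$ with $\mu=\f$. By Theorem~\ref{t:dual}, in the form \eqref{dual}, we may write
\[
F(c)=\sup\left\{\int_\Omega u\,f\,dx+\sum_{i=1}^k c_i\,v(x_i)\ :\ u\in{\rm L}^1_\mu(\Omega),\ v\in\R^k,\ u(x)+v(x_i)\le|x-x_i|^p\ \text{for $\mu$-a.e.\ $x$ and all $i$}\right\}.
\]
The point is that the set over which the supremum is taken does not depend on $c$: the constraint $u(x)+v(x_i)\le|x-x_i|^p$ involves only $u$, $v$ and the fixed points $x_i$. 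For each admissible pair $(u,v)$ the map $c\mapsto\int_\Omega u\,f\,dx+\sum_{i=1}^k c_i v(x_i)$ is affine on $S$, hence $F$ is a supremum of a family of affine functions, and is therefore convex. (Alternatively one can argue at the level of transport plans: if $\gamma\in\Pi(\mu,\nu_c)$ and $\gamma'\in\Pi(\mu,\nu_{c'})$ are optimal, then for $\lambda\in[0,1]$ the plan $\lambda\gamma+(1-\lambda)\gamma'$ has marginals $\mu$ and $\nu_{\lambda c+(1-\lambda)c'}$, and integrating $|x-y|^p$ gives $F(\lambda c+(1-\lambda)c')\le\lambda F(c)+(1-\lambda)F(c')$.)

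\emph{Continuity.} The map $c\mapsto\nu_c$ is continuous from $S$, with the Euclidean topology, to $\Prob(\Omega)$ with the weak topology, since $\int\varphi\,d\nu_c=\sum_i c_i\varphi(x_i)$ depends linearly on $c$ for every bounded continuous $\varphi$. As $\Omega$ is bounded, $W_p$ metrizes the weak convergence, so $c\mapsto W_p(\mu,\nu_c)$ is continuous, and hence so is $F=W_p(\mu,\nu_c)^p$. For a quantitative version one sets $D=\mathrm{diam}\,\Omega$: leaving the mass $\min(c_i,c_i')$ at each $x_i$ and routing the remainder arbitrarily produces a transport plan showing $W_p^p(\nu_c,\nu_{c'})\le\tfrac12 D^p\sum_{i=1}^k|c_i-c_i'|$, and combining this with the triangle inequality $|W_p(\mu,\nu_c)-W_p(\mu,\nu_{c'})|\le W_p(\nu_c,\nu_{c'})$ and with $|a^p-b^p|\le pD^{p-1}|a-b|$ for $a,b\in[0,D]$ gives Hölder continuity of $F$ with exponent $1/p$.

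\emph{Main obstacle.} Neither statement is genuinely difficult. In the convexity part the only thing to watch for is precisely the $c$-independence of the admissible set in the dual problem, which is what turns $F$ into a supremum of affine functions. For continuity, since convexity by itself would only give continuity in the relative interior of the simplex $S$, the behaviour on the boundary of $S$ must be checked separately, and this is exactly what the metrization of weak convergence (or the explicit estimate above) provides.
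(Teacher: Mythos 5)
Your proof is correct, and your primary route to convexity is genuinely different from the paper's. The paper argues on the primal side: it takes optimal plans $\gamma\in\Pi(\mu,\nu_c)$ and $\gamma'\in\Pi(\mu,\nu_{c'})$, observes that $t\gamma+(1-t)\gamma'$ is admissible for the interpolated target $\nu_{tc+(1-t)c'}$, and integrates $|x-y|^p$ -- exactly the argument you relegate to your parenthetical. Your main argument instead writes $F$ as a supremum of functions affine in $c$ via the dual formulation \eqref{dual}. Both work; the primal route is slightly more self-contained (it needs only the existence of optimal, or even near-optimal, plans), while the dual route makes the convexity structurally transparent and would give, with no extra effort, joint convexity of $W_p^p$ in both marginals. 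One small point you should make explicit in the dual argument: in \eqref{dual} with $\nu=\nu_c$ the constraint $u(x)+v(x_i)\le|x-x_i|^p$ is only imposed for $\nu_c$-a.e.\ $y$, i.e.\ only at those $x_i$ with $c_i>0$, so your admissible set (constraint for \emph{all} $i$) is a priori smaller. The supremum is nonetheless unchanged: since $\sum_i c_i=1$ some $c_j>0$, whence $u$ is essentially bounded above by $D^p-v(x_j)$, and for any index with $c_i=0$ one may decrease $v(x_i)$ until the constraint holds without affecting the objective, as the corresponding term $c_i v(x_i)$ vanishes. Your continuity argument coincides with the paper's (metrization of weak convergence by $W_p$ on the bounded set $\Omega$); the quantitative H\"older estimate and the remark that convexity alone would only give continuity on the relative interior of $S$ are correct and are welcome additions, though the paper does not need them.
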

\begin{proof}
The continuity follows directly from the fact that the topology induced by the Wasserstein distance of order $p$ is the weak convergence of measures, as observed in Section \ref{TransportSection}.

Let us now show the convexity. Let $(c_1,\ldots,c_k)$ and $(c'_1,\ldots,c'_k)$ be two elements of $S$ 
and let $t\in[0,1]$. Consider two optimal transport plans 
$$ \gamma \in \Pi \left(f \L^d \res \Omega , \sum_{i=1}^k c_i \delta_{x_i} \right) \qquad \text{and}
\qquad  \gamma' \in \Pi \left(f \L^d \res \Omega , \sum_{i=1}^k c'_i \delta_{x_i} \right) \,. $$
This implies that
$$ t \gamma + (1-t) \gamma' \in \Pi \left(f \L^d \res \Omega , 
\sum_{i=1}^k \big(t c_i + (1-t) c'_i \big) \delta_{x_i} \right) \,, $$
hence we deduce
\[
\begin{split}
F \big( t(c_1,\ldots,c_k) & + (1-t)(c_1',\ldots,c_k') \big) 
\; = \; W_p^p \left( \f , \sum_{i=1}^k \big( tc_i + (1-t) c_i' \big) \delta_{x_i} \right) \\
& \leq  \int_{\Omega \times \Omega} |x-y|^p \, d \big( t \gamma +(1-t ) \gamma' \big) (x,y) \\
& =  t \left(\int_{\Omega \times \Omega} |x-y|^p \, d\gamma(x,y) \right)
+ (1-t) \left( \int_{\Omega \times \Omega} |x-y|^p \, d\gamma' (x,y)\right) \\
& =  t F(c_1,\ldots,c_k) + (1-t) F(c_1',\ldots,c_k') \,.
\end{split}
\]
This precisely means that the map $F$ is convex.
\end{proof}

\subsection{Existence and uniqueness of the optimum} In this subsection we show how to use the reformulation of the minimization problem \eqref{e:minop} obtained in Lemma \ref{reform} and the results on optimal mass transportation collected in Section \ref{TransportSection} to show existence and uniqueness (under suitable assumptions) of an optimum for \eqref{e:minop}. In the following we consider for $i = 1 ,\ldots,k$ the functions $\eta_i : [0,1] \to [0, +\infty[$ defined by
\begin{equation}\label{e:eta}
\eta_i (t) = t h_i(t) \,.
\end{equation}

\begin{proposition}\label{exists}
Assume that the functions $h_i $ are lower semi-continuous. Then:
\begin{itemize}
\item[(i)] there exists an optimum $(\widehat A_i)_{i=1,\ldots,k}$ for \eqref{e:minop};
\item[(ii)] if in addition the maps $\eta_i$ are strictly convex the optimum is unique.
\end{itemize}
\end{proposition}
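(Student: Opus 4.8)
The plan is to use Lemma \ref{reform} to reduce the problem to minimizing the finite-dimensional function
$$
G(c) = F(c) + \sum_{i=1}^k \eta_i(c_i)
$$
over the compact simplex $S$, where $F$ is the function studied in Lemma \ref{lemmaF} and $\eta_i(t)=th_i(t)$ as in \eqref{e:eta}. For part (i), I would first observe that $F$ is continuous on $S$ by Lemma \ref{lemmaF}, while each $\eta_i$ is lower semi-continuous on $[0,1]$ because $h_i$ is lower semi-continuous and multiplication by the continuous nonnegative factor $t$ preserves lower semi-continuity (on $[0,1]$, where $t\geq0$). Hence $G$ is lower semi-continuous on the compact set $S$, so it attains its infimum at some $\widehat c \in S$. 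By Lemma \ref{reform} this infimum equals $\inf\eqref{e:minop}$, so it remains to produce an actual partition realizing the value $G(\widehat c)$: by Theorem \ref{exist} there is an optimal transport map $T$ from $\mu = \f$ to $\sum_i \widehat c_i \delta_{x_i}$, and by Remark \ref{DifToDisc} this $T$ is associated with a partition $(\widehat A_i)_{i=1,\ldots,k}$ with $\int_{\widehat A_i} f\,dx = \widehat c_i$; unwinding the chain of equalities in the proof of Lemma \ref{reform} shows that this partition is an optimum for \eqref{e:minop}.

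For part (ii), assuming each $\eta_i$ strictly convex, I would argue that $G$ is strictly convex on $S$, hence has a unique minimizer $\widehat c$. Indeed $F$ is convex by Lemma \ref{lemmaF} and $c \mapsto \sum_i \eta_i(c_i)$ is strictly convex: if $c \neq c'$ then some coordinate differs, $c_i \neq c_i'$, and strict convexity of $\eta_i$ gives strict inequality in that term while all other terms and the $F$-term satisfy the (non-strict) convexity inequality, so the sum is strictly convex. Therefore the minimizing vector $\widehat c = (\widehat c_1,\ldots,\widehat c_k)$ of citizen-masses is unique. Finally, uniqueness of the partition itself follows: any optimum $(A_i)$ must have $\int_{A_i} f\,dx = \widehat c_i$ (otherwise the associated $c$-vector would be a different minimizer of $G$, a contradiction), and then $(A_i)$ induces an optimal transport map from $\mu$ to $\sum_i \widehat c_i \delta_{x_i}$, which is unique $f$-a.e.\ by Corollary \ref{c:uniqdir} (this is where the case $p=1$ is handled, using the atomic target). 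Hence the sets $A_i$ are determined up to $f$-negligible sets.

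The main obstacle I anticipate is the bookkeeping in the reduction itself rather than any deep analytic point: one must be careful that a minimizer $\widehat c$ of $G$ on $S$ genuinely corresponds to an optimum partition and conversely, i.e.\ that the infimum in Lemma \ref{reform} over pairs $(c, (A_i))$ with the constraint $\int_{A_i} f = c_i$ is attained jointly, so that optimality of the partition is equivalent to optimality of its mass-vector together with $(A_i)$ being an optimal transport partition for that mass-vector. The other slightly delicate point is the lower semi-continuity of $\eta_i = t h_i(t)$ on the closed interval including $t=0$ and $t=1$, but since the factor $t$ is continuous and nonnegative there, this is routine. No compactness issue arises because $S$ is already compact and the relevant optimal transport existence is exactly Theorem \ref{exist}.
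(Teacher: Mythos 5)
Your proposal is correct and follows essentially the same route as the paper: reduction via Lemma \ref{reform} to minimizing the lower semi-continuous functional $F(c)+\sum_{i=1}^k\eta_i(c_i)$ on the compact simplex $S$, then Theorem \ref{exist} and Remark \ref{DifToDisc} to realize the minimizing mass vector by a partition, and for uniqueness the combination of strict convexity of $\sum_i\eta_i$ with convexity of $F$ (unique mass vector) together with uniqueness of the optimal transport map onto an atomic target (Theorem \ref{exist} for $p>1$, Corollary \ref{c:uniqdir} for $p=1$). The only cosmetic difference is that the paper phrases part (ii) as a proof by contradiction, whereas you argue directly; the ingredients and lemmas invoked are identical.
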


\begin{proof}
(i) Recalling the definitions of $F$ and $\eta_i$ and using Lemmas \ref{reform} and \ref{lemmaF} we have
$$ \inf \eqref{e:minop} = \inf_{c \in S} \left\{ F(c_1,\ldots,c_k) + \sum_{i=1}^k \eta_i(c_i) \right\} \,. $$  
The expression to be minimized on the right hand side of the above equality is a lower semi-continuous function defined on the non-empty compact set $S$, hence there exists a minimizer $(\widehat c_i)_{i=1,\ldots,k}$. By Theorem \ref{exist} there exists an optimal transport map ${\widehat T}$ from $\f$ to 
$\sum_{i=1}^k {\widehat c_i} \delta_{x_i}$. Thanks to Remark \ref{DifToDisc} the transport 
map $\widehat T$ is associated to a partition $({\widehat A_i})_{i=1,\ldots,k}$ of $\Omega$ which satisfies $\widehat c_i = \int_{{\widehat A_i}} f(x) \, dx$. Let us check that this partition is an optimum for \eqref{e:minop}. From the above considerations we deduce
\[\begin{split}
\inf \eqref{e:minop} 
& =\min_{c \in S} \left\{ F(c_1,\ldots,c_k) + \sum_{i=1}^k \eta_i(c_i) \right\} 
\; =  \; F \left(\widehat c_1,\ldots,\widehat c_k \right) 
+ \sum_{i=1}^k \eta_i\left({\widehat c_i}\right) \\
& =W_p^p \left( \f, \sum_{i=1}^k \widehat c_i \delta_{x_i} \right)
+ \sum_{i=1}^k \eta_i \left({\widehat c_i}\right) \\
& =\sum_{i=1}^k \int_{{\widehat A_i}} | x-x_i|^p  f(x) \, dx 
+ \sum_{i=1}^k \eta_i \left(\int_{{\widehat A_i}} f(x) \, dx \right) \,.
\end{split}\]
This exactly means that $({\widehat A_i})_{i=1,\ldots,k}$ is an optimum for \eqref{e:minop}.

(ii) Assume by contradiction that we have two different optima $(\widehat A_i)_{i=1,\ldots,k}$ and $(\widetilde A_i)_{i=1,\ldots,k}$ for \eqref{e:minop}. We define
$$
\widehat c_i = \int_{\widehat A_i} f(x) \, dx \qquad \text{ and } \qquad 
\widetilde c_i = \int_{\widetilde A_i} f(x) \, dx \qquad \text{ for each $i=1,\ldots,k$.} 
$$
It is immediate to check that $\widehat T(x) = \sum_{i=1}^k x_i \indic_{\widehat A_i} (x)$ is an optimal transport map from $\f$ to $\sum_{i=1}^k \widehat c_i \delta_{x_1}$ and that $\widetilde T(x) = \sum_{i=1}^k x_i \indic_{\widetilde A_i} (x)$ is an optimal transport map from $\f$ to $\sum_{i=1}^k \widetilde c_i \delta_{x_1}$. Recalling the fact that the optimal transport map from a diffuse measure to an atomic measure is unique (see Theorem \ref{exist} for the case $p>1$ and Corollary \ref{c:uniqdir} for the case $p=1$) we deduce that $(\widehat c_i)_{i=1,\ldots,k} \not = (\widetilde c_i)_{i=1,\ldots,k}$. But using again the result in Lemma \ref{reform} we deduce that $(\widehat c_i)_{i=1,\ldots,k}$ and $(\widetilde c_i)_{i=1,\ldots,k}$ are minimizers of the map $S \ni (c_1,\ldots,c_k) \mapsto F(c_1,\ldots,c_k) + \sum_{i=1}^k \eta_i(c_i)$. However, the strict convexity of the maps $\eta_i$, together with the convexity of $F$ shown in Lemma \ref{lemmaF}, implies the uniqueness of the minimizer of this map, and from this contradiction we deduce the thesis.
\end{proof}

\subsection{Characterization of the optimum}

We say that a function $\phi : ]0,1] \to [0, +\infty[$ is differentiable if it is differentiable in the usual sense in the open interval $]0,1[$ and has a left derivative in the point $1$. We also use the convention $0 h_i'(0) = 0$.

\begin{proposition}[Necessary optimality condition] \label{propNC}
Let $(A_i)_{i=1,\ldots,k}$ be an optimum for \eqref{e:minop}. Assume that $h_i$ are differentiable in $]0,1]$ and continuous in $0$. Then for all $i=1,\ldots,k$ the following holds:
\begin{equation}\label{CNS}
\left\{
\begin{array}{l}
A_i = \left\{x \in \Omega \; : \; |x-x_i|^p +h_i(c_i)+ c_ih_i'(c_i) < |x-x_j|^p + h_j(c_j)+ c_jh_j'(c_j) \ \  \forall \, j \not = i \right\}  \\[6pt]
c_i = \int_{A_i} f(x) \, dx \,,
\end{array}
\right.
\end{equation}
where as usual equalities between sets are intended up to $f$-negligible sets.
\end{proposition}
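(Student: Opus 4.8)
The plan is to obtain \eqref{CNS} by an \emph{outer variation} (mass‑transfer) argument applied to the optimal partition $(A_i)$: we move an infinitesimal amount of population between two cells and read off the first‑order optimality condition. Write $c_i=\int_{A_i}f\,dx$ and recall $\eta_i(t)=t h_i(t)$ from \eqref{e:eta}, so that the cost \eqref{e:minop} of an arbitrary admissible partition $(B_l)_{l=1,\ldots,k}$ with masses $b_l=\int_{B_l}f\,dx$ equals $\sum_l\int_{B_l}|x-x_l|^p f\,dx+\sum_l\eta_l(b_l)$. Under the present hypotheses $\eta_i$ is differentiable on $]0,1]$ with $\eta_i'(t)=h_i(t)+t h_i'(t)$, and, using continuity of $h_i$ at $0$ together with the convention $0h_i'(0)=0$, the first‑order expansion $\eta_i(c_i+s)=\eta_i(c_i)+s\,\eta_i'(c_i)+o(s)$ holds as $s\to0$ from the relevant side (with $\eta_i'(0)=h_i(0)$). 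It is convenient to set $\phi_l(x):=|x-x_l|^p+h_l(c_l)+c_lh_l'(c_l)$, so that \eqref{CNS} amounts to $A_i=\{x\in\Omega:\phi_i(x)<\phi_j(x)\ \text{for all }j\ne i\}$ up to $f$-negligible sets.

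The main step is the first variation. Fix $i\ne j$ with $c_i>0$, choose $x_0\in A_i$ such that $\mu\big(A_i\cap B(x_0,r)\big)>0$ for every small $r$ (this holds for $f$-a.e.\ $x_0\in A_i$, with $\mu=\f$), and for small $r>0$ transfer the mass $E_r:=A_i\cap B(x_0,r)$ from $A_i$ to $A_j$; the number $\eps_r:=\int_{E_r}f\,dx$ is positive, tends to $0$ and is $<c_i$ for $r$ small. Testing the minimality of $(A_i)$ against this admissible competitor gives
\[
0\ \le\ \int_{E_r}\big(|x-x_j|^p-|x-x_i|^p\big)f\,dx\ +\ \big(\eta_i(c_i-\eps_r)-\eta_i(c_i)\big)\ +\ \big(\eta_j(c_j+\eps_r)-\eta_j(c_j)\big).
\]
Dividing by $\eps_r$ and letting $r\to0$: since $x\mapsto|x-x_j|^p-|x-x_i|^p$ is continuous, its average over $E_r$ (with respect to the normalized measure $\eps_r^{-1}f\L^d\res E_r$) converges to its value at $x_0$, while the two queue increments converge to $-\eta_i'(c_i)$ and $+\eta_j'(c_j)$ by the expansion above; hence $\phi_i(x_0)\le\phi_j(x_0)$. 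As this holds for $f$-a.e.\ $x_0\in A_i$ and every $j\ne i$, we conclude that $\phi_i(x)=\min_l\phi_l(x)$ for $f$-a.e.\ $x\in A_i$. The symmetric variation — moving a small mass into $A_i$ from some cell $A_l$ with $c_l>0$ — is equally available; it uses $\eta_i(\eps)/\eps\to h_i(0)=\eta_i'(0)$ and yields the complementary inequality (this also settles empty cells, where $c_i=0$).

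The delicate point — and, I expect, the main obstacle — is to upgrade $\phi_i\le\phi_j$ to the \emph{strict} inequality required in \eqref{CNS}, i.e.\ to show that $\{\phi_i=\phi_j\}\cap A_i$ is $f$-negligible. Here I would argue that $\{\phi_i=\phi_j\}$ is a level set of $g(x):=|x-x_i|^p-|x-x_j|^p$, whose gradient $\nabla g(x)=p\big(|x-x_i|^{p-2}(x-x_i)-|x-x_j|^{p-2}(x-x_j)\big)$ never vanishes for $x\ne x_i,x_j$, since the map $z\mapsto|z|^{p-2}z$ is injective and $x_i\ne x_j$. Thus $g$ is a submersion off two points, each of its level sets is $\L^d$-negligible, and therefore $f$-negligible. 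Consequently $\phi_i(x)<\phi_j(x)$ for $f$-a.e.\ $x\in A_i$ and all $j\ne i$, which gives the inclusion $A_i\subseteq\{x:\phi_i(x)<\phi_j(x)\ \forall j\ne i\}$ up to $f$-negligible sets.

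For the reverse inclusion: $f$-a.e.\ point $x$ of $\{x:\phi_i(x)<\phi_j(x)\ \forall j\ne i\}$ lies in some cell $A_l$ of the optimal partition, and if $l\ne i$ the first‑variation inequality of the second paragraph forces $\phi_l(x)\le\phi_i(x)$, contradicting $\phi_i(x)<\phi_l(x)$; hence $x\in A_i$ up to $f$-negligible sets. Combining the two inclusions yields \eqref{CNS}; the same dichotomy applied to an empty cell ($c_i=0$) shows that in that case both sides of \eqref{CNS} are $f$-negligible, so the identity holds there as well.
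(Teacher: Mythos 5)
Your argument is essentially the paper's: the same first variation obtained by transferring the mass $A_i\cap B(x_0,r)$ from $A_i$ to $A_j$ and passing to the limit (you normalize by the transferred mass $\eps_r$ where the paper normalizes by $\omega_d\eps^d$ and invokes Lebesgue/density points, a cosmetic difference), and in fact you are more complete, since you also justify the two steps the paper leaves implicit — the negligibility of the tie sets $\{\phi_i=\phi_j\}$ and the reverse inclusion. One small flaw in your added negligibility argument: $z\mapsto|z|^{p-2}z$ is injective only for $p>1$; for $p=1$ the critical set of $g$ is the part of the line through $x_i,x_j$ outside the segment $[x_i,x_j]$, which is still $\L^d$-negligible when $d\geq 2$ but can carry positive measure when $d=1$ (a degenerate case the paper's own proof does not address either).
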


\begin{proof} Consider a partition $(A_r)_{r=1,\ldots,k}$ which is an optimum for \eqref{e:minop} and take an $\L^d$-negligible set $N \subset \Omega$ such that for every $x \not \in N$
\begin{itemize}
\item[(i)] $x$ is a Lebesgue point of $f$;
\item[(ii)] if $x \in A_r$, then $x$ is a point of density $1$ in $A_r$.
\end{itemize}
Fix now two indices $i$ and $j$ in $\{1,\ldots,k\}$ and consider a point $x_0 \in A_i \setminus N$. 
For every $\epsilon>0$ let the partition $(\widetilde A_r)_{r=1,\ldots,k}$ be defined by
$$
\begin{cases}
\widetilde A_i = A_i \setminus B_\eps (x_0) \\
\widetilde A_j = A_j \cup \big( A_i \cap B_\eps (x_0) \big) \\
\widetilde A_r = A_r \; \text{ for all } r \in \{1,\ldots,k\} \setminus \{i,j\} \,.
\end{cases}
$$
Let $c_\eps = \int_{A_i \cap B_\eps (x_0)} f(x) \, dx$. By the optimality of the partition 
$(A_r)_{r=1,\ldots,k}$, comparing its total cost with the total cost of the partition $(\widetilde A_r)_{r=1,\ldots,k}$ we deduce
$$
\begin{aligned}
\sum_{r=1}^k \int_{A_r}  |x-x_r|^p & f(x) \, dx + \sum_{r=1}^k c_r h_r(c_r) 
\; \leq \sum_{\substack{r=1,\ldots,k \\ r \not = i, j}} \int_{A_r} |x-x_r|^p  f(x) \, dx \\
& + \sum_{\substack{r=1,\ldots,k \\ r \not = i, j}} c_r h_r(c_r) 
+  \int_{A_i \backslash B_\eps(x_0)} |x-x_i|^p  f(x) \, dx + (c_i - c_\eps) h_i(c_i-c_\eps) \\
& +  \int_{A_j \cup ( A_i \cap B_\eps (x_0) )} |x-x_j|^p f(x) \, dx +(c_j + c_\eps) h_j(c_j+c_\eps) \,.
\end{aligned}
$$
This leads to
\begin{multline*}
\int_{A_i \cap B_\eps(x_0)} |x-x_i|^p f(x) \, dx + c_i h_i(c_i) +c_jh_j(c_j) \\
\leq  (c_i - c_\eps) h_i(c_i-c_\eps)
+ \int_{A_i \cap B_\eps(x_0)} |x-x_j|^p f(x) \, dx + (c_j + c_\eps) h_j (c_j+c_\eps) \,.
\end{multline*}
We now divide this expression by $\omega_d \eps^d$, where $\omega_d$ is the volume of the unit ball in $\R^d$. We obtain
\begin{multline*}
\frac{1}{\omega_d \eps^d} \int_{A_i \cap B_\eps(x_0)} |x-x_i|^p f(x)\,dx 
+ \frac{c_i}{\omega_d \eps^d} \big[ h_i(c_i)-h_i(c_i-c_\eps) \big] 
+ \frac{c_\eps}{\omega_d \eps^d}  h_i(c_i-c_\eps) \\
\leq \frac{1}{\omega_d \eps^d} \int_{A_i \cap B_\eps(x_0)} |x-x_j|^p f(x) \, dx 
+ \frac{c_j}{\omega_d \eps^d} \big[ h_j(c_j+c_\eps) - h_j(c_j) \big] 
+ \frac{c_\eps}{\omega_d \eps^d} h_j(c_j+c_\eps) \,.
\end{multline*}
Letting $\eps \to 0$ and recalling that $x_0$ satisfies assumptions (i) and (ii) we easily obtain
$$
|x_0 - x_i|^p  f(x_0) + f(x_0) c_i h'_i(c_i) + f(x_0) h_i(c_i) \leq  
|x_0 - x_j|^p  f(x_0) + f(x_0) c_j h'_j(c_j) + f(x_0) h_j(c_j) \,,
$$
that is the desired thesis. \end{proof}

The following lemma will imply that condition (\ref{CNS}) is in fact sufficient, under reasonable assumptions.

\begin{lemma}\label{Unicity} 
Assume that the functions $h_i$ are differentiable in $]0,1]$ and continuous in $0$ and that $\eta_i$ are convex. Then there exists at most one partition $(A_i)_{i=1,\ldots,k}$ for which \eqref{CNS} holds.
\end{lemma}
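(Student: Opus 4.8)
The plan is to recognize that a partition obeying \eqref{CNS} is just a weighted Voronoi (power) decomposition of $\Omega$ attached to the weights $v_i:=-\bigl(h_i(c_i)+c_ih_i'(c_i)\bigr)=-\eta_i'(c_i)$, where $c_i=\int_{A_i}f$, and that, by Proposition \ref{cellShape}(ii), this weight vector is a subgradient of the convex map $F$ of Lemma \ref{lemmaF} at the mass vector $c=(c_1,\ldots,c_k)$. Then monotonicity of the subdifferential of $F$, together with the convexity of the maps $\eta_i$, forces the weight vectors of two such partitions to coincide; and since a power decomposition is completely determined by its weights, uniqueness of the partition follows.

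First I would fix a partition $(A_i)_{i=1,\ldots,k}$ satisfying \eqref{CNS} and set $c_i=\int_{A_i}f$, $\nu=\sum_{i=1}^kc_i\delta_{x_i}$, $v_i=-\eta_i'(c_i)$ (using $\eta_i'(0)=h_i(0)$, legitimate since $h_i$ is continuous at $0$ and $0\,h_i'(0)=0$), and $u(x)=\min_{1\le i\le k}\{|x-x_i|^p-v_i\}$; this $u$ is bounded, hence lies in ${\rm L}^1_\mu(\Omega)$ since $\Omega$ is bounded. Granting the elementary fact, discussed at the end, that the sets where two of the quantities $|x-x_i|^p-v_i$ coincide are $\mu$-negligible, condition \eqref{CNS} says exactly that the pair $(u,v)$ and the partition $(A_i)$ satisfy \eqref{Bouchi}; Proposition \ref{cellShape}(ii) then gives that $(u,v)$ is optimal in the dual problem \eqref{dual} relative to $(\mu,\nu)$, so that $\int_\Omega u\,d\mu+\sum_{i=1}^kc_iv_i=W_p^p(\mu,\nu)=F(c)$. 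Since the same $(u,v)$ is still admissible in \eqref{dual} for any target $\sum_{i=1}^kc_i'\delta_{x_i}$ (the constraint $u(x)+v_i\le|x-x_i|^p$ does not involve the masses), I obtain for every $c'\in S$
$$F(c')\;\ge\;\int_\Omega u\,d\mu+\sum_{i=1}^kc_i'v_i\;=\;F(c)+\sum_{i=1}^kv_i(c_i'-c_i).$$
In other words, $v$ is a subgradient of $F$ at the point $c$.

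Next I would take two partitions $(A_i)$ and $(A_i')$ satisfying \eqref{CNS}, with mass vectors $c,c'$ and weight vectors $v,v'$ built as above, apply the displayed inequality of the previous step once to $(A_i)$ with test point $c'$ and once to $(A_i')$ with test point $c$, and add the two results. This yields $\sum_{i=1}^k(v_i-v_i')(c_i'-c_i)\le 0$, that is $\sum_{i=1}^k\bigl(\eta_i'(c_i')-\eta_i'(c_i)\bigr)(c_i'-c_i)\le 0$. Convexity of each $\eta_i$ makes $\eta_i'$ nondecreasing, so every summand is nonnegative; hence every summand vanishes, and for each $i$ this forces $\eta_i'(c_i)=\eta_i'(c_i')$ (trivially if $c_i=c_i'$, and because the product is zero otherwise). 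Therefore $v=v'$. Finally, \eqref{CNS} expresses $A_i$ (and likewise $A_i'$) solely through the weights, namely $A_i=\{x:|x-x_i|^p-v_i<|x-x_j|^p-v_j\ \text{ for all }j\ne i\}$, so from $v=v'$ we conclude $A_i=A_i'$ up to $f$-negligible sets for every $i$: the partition is unique.

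The only step requiring genuine care is the claim used above that a partition obeying \eqref{CNS} actually satisfies the hypotheses of Proposition \ref{cellShape}(ii), i.e.\ that $\bigcup_iA_i$ has full $\mu$-measure, equivalently that the finitely many sets $\{x:|x-x_i|^p-v_i=|x-x_j|^p-v_j\}$ (for $i\ne j$) are $\mu$-negligible. This holds because such a set is $\L^d$-negligible, the function $x\mapsto|x-x_i|^p-|x-x_j|^p$ being non-constant whenever $x_i\ne x_j$, and $\mu\ll\L^d$; should some of the $x_i$ coincide, one first merges the corresponding indices. Everything else is routine bookkeeping with the convexity of $F$ and of the $\eta_i$.
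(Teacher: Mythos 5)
Your proof is correct, but it takes a genuinely different route from the paper's. The paper argues directly: assuming two partitions $(A_i)$ and $(\widehat A_i)$ satisfying \eqref{CNS} with mass vectors $c$ and $\widehat c$, it considers the index set $I=\{i : c_i<\widehat c_i\}$ and uses the monotonicity of the $\eta_i'$ to show that the defining inequalities in \eqref{CNS} force the inclusion $\bigcup_{i\in I}\widehat A_i\subseteq\bigcup_{i\in I}A_i$, which contradicts $\sum_{i\in I}c_i<\sum_{i\in I}\widehat c_i$ unless $I$ is empty; by symmetry $c=\widehat c$, and then $A_i=\widehat A_i$ directly from \eqref{CNS}. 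You instead route the argument through the transport duality of Section 2: via Proposition \ref{cellShape}(ii) you identify the weight vector $v_i=-\eta_i'(c_i)$ as a subgradient of the convex function $F$ of Lemma \ref{lemmaF} at $c$, and then monotonicity of the subdifferential of $F$ combined with monotonicity of each $\eta_i'$ forces the two weight vectors to coincide, after which \eqref{CNS} determines the cells. Both arguments ultimately rest on the monotonicity of $\eta_i'$ and both conclude by noting that the partition is a function of the weights alone; yours is heavier on machinery but conceptually illuminating, since it exhibits \eqref{CNS} as the first-order optimality system for the auxiliary convex minimization of Lemma \ref{reform} and makes transparent why mere convexity of the $\eta_i$ (as opposed to the strict convexity used in Proposition \ref{exists}(ii)) suffices: the optimality condition pins down the dual weights uniquely even when the primal masses might a priori fail to be unique. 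One minor remark: your final paragraph about the $\mu$-negligibility of the tie sets is not really needed, because the lemma already assumes that $(A_i)$ is a partition in the paper's sense, so $\bigcup_i A_i$ has full measure by hypothesis and \eqref{Bouchi} follows from \eqref{CNS} without further argument; the observation does no harm, however.
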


We notice that the convexity assumption on $\eta_i$ is satisfied for instance if $h_i(t) = t^q$ with $0<q<1$, which are natural concave cost functions.

\begin{proof}[Proof of Lemma \ref{Unicity}]
Assume by contradiction that there exist two partitions $(A_i)_{i=1,\ldots,k}$ and $(\widehat A_i)_{i=1,\ldots,k}$ satisfying \eqref{CNS}. We set $c_i = \int_{A_i} f(x) \, dx$ and $\widehat c_i = \int_{\widehat A_i} f(x) \, dx$ and we consider the set $I = \big\{ i \in \{1,\ldots,k\} \, : \, c_i < \widehat c_i \big\}$. Assume that $I$ is nonempty. Recalling the definition of the functions $\eta_i$ and comparing with \eqref{CNS} we obtain
$$
\bigcup_{i\in I} A_i = \left\{ x \in \Omega \; : \; \min_{i\in I} | x- x_i |^p +\eta'_i (c_i) < 
\min_{j\not \in I} | x-x_j |^p + \eta'_j (c_j) \right\}
$$
and
$$
\bigcup_{i\in I} \widehat A_i = \left\{ x \in \Omega \; : \; \min_{i\in I} | x- x_i |^p + \eta'_i (\widehat c_i) <
\min_{j\not \in I} | x-x_j |^p + \eta'_j (\widehat c_j) \right\} \,.
$$  
Since we are assuming that the functions $\eta_i$ are convex, we obtain that $\eta'_i(c_i) \leq \eta'_i(\widehat c_i)$ for every $i \in I$ and that $\eta'_j(c_j) \geq \eta'_j(\widehat c_j)$ for every $j \not \in I$. This immediately implies that $\cup_{i\in I} \widehat A_i \subseteq \cup_{i\in I} A_i $, thus
\begin{equation}\label{e:bad}
\int_{\cup_{i\in I} \widehat A_i } f(x) \, dx \leq \int_{\cup_{i\in I} A_i} f(x) \, dx \,.
\end{equation}
But recalling the definition of $c_i$ and $\widehat c_i$ and of the set $I$ we have
$$
\int_{\cup_{i\in I} A_i} f(x) \, dx = \sum_{i \in I} c_i < \sum_{i \in I} \widehat c_i = \int_{\cup_{i\in I} \widehat A_i } f(x) \, dx \,,
$$
and this is in contradiction with \eqref{e:bad}. We deduce that the set $I$ is empty, which means that $c_i \geq \widehat c_i$ for every $i = 1 , \ldots, k$. By symmetry the opposite inequality is also true, thus we obtain that $c_i = \widehat c_i$ for every $i = 1 , \ldots, k$. But going back to \eqref{CNS} we immediately deduce that $A_i = \widehat A_i$ for every $i = 1, \ldots, k$.
\end{proof}

\begin{proposition}[Sufficient optimal condition]\label{p:suffop}
Let $(A_i)_{i=1,\ldots,k}$ be a partition of $\Omega$ which satisfies \eqref{CNS}. Assume
that the functions $h_i$ are differentiable in $]0,1]$ and continuous in $0$ and that $\eta_i$ are convex. Then $(A_i)_{i=1,\ldots,k}$ is the unique optimum for \eqref{e:minop}.
\end{proposition}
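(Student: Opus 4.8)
The plan is to chain together the three results just proved, namely the existence statement in Proposition \ref{exists}(i), the necessary condition in Proposition \ref{propNC}, and the uniqueness statement in Lemma \ref{Unicity}. First I would check that the hypotheses here are strong enough to invoke all three: a function that is differentiable on $]0,1]$ and continuous at $0$ is in particular continuous on $[0,1]$, hence lower semi-continuous, so Proposition \ref{exists}(i) applies and yields an optimum $(\widehat A_i)_{i=1,\ldots,k}$ for \eqref{e:minop}; the differentiability on $]0,1]$ and continuity at $0$ are exactly what Proposition \ref{propNC} requires; and the convexity of the maps $\eta_i$ is exactly what Lemma \ref{Unicity} requires.

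The core of the argument is then short. Since $(\widehat A_i)_{i=1,\ldots,k}$ is an optimum, Proposition \ref{propNC} tells us it satisfies condition \eqref{CNS}. By hypothesis the given partition $(A_i)_{i=1,\ldots,k}$ also satisfies \eqref{CNS}. Lemma \ref{Unicity} asserts that at most one partition can satisfy \eqref{CNS}, so $(A_i)_{i=1,\ldots,k} = (\widehat A_i)_{i=1,\ldots,k}$ up to $f$-negligible sets; in particular $(A_i)_{i=1,\ldots,k}$ is an optimum. For the uniqueness of the optimum one repeats the same reasoning: any optimum $(\widetilde A_i)_{i=1,\ldots,k}$ satisfies \eqref{CNS} by Proposition \ref{propNC}, hence coincides with $(A_i)_{i=1,\ldots,k}$ by Lemma \ref{Unicity}.

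The only subtle point I would flag explicitly is that the uniqueness of the optimum here is \emph{not} being deduced from Proposition \ref{exists}(ii), which needs the stronger assumption that the $\eta_i$ are strictly convex; instead it comes for free from the fact that every optimum must obey the rigid characterization \eqref{CNS}, and \eqref{CNS} has at most one solution by Lemma \ref{Unicity}. Beyond this observation the proof is pure bookkeeping, with no real obstacle to overcome once Propositions \ref{exists} and \ref{propNC} and Lemma \ref{Unicity} are in hand.
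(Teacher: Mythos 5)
Your proof is correct and follows exactly the route the paper takes: the paper's own proof is the one-line statement that the result ``is a direct consequence of Propositions \ref{exists} and \ref{propNC} and of Lemma \ref{Unicity},'' and your write-up simply fills in that chain (existence of an optimum, which must satisfy \eqref{CNS}, which has at most one solution) with the correct hypothesis checks. Your remark that uniqueness comes from Lemma \ref{Unicity} rather than from the strict-convexity statement in Proposition \ref{exists}(ii) is accurate and a worthwhile clarification.
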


\begin{proof}
It is a direct consequence of Propositions \ref{exists} and \ref{propNC} and of Lemma \ref{Unicity}.
\end{proof}

\subsection{Summary of the results on the optimum}

We close this section presenting a summary of the results relative to the optimum for \eqref{e:minop}. Remember that the functions $\eta_i$ have been defined in \eqref{e:eta}.
\begin{itemize}
\item If $h_i$ are lower semi-continuous, then there exists an optimum for \eqref{e:minop}.
\item If $h_i$ are lower semi-continuous and $\eta_i$ are strictly convex, then there exists a unique optimum for \eqref{e:minop}.
\item If $h_i$ are differentiable in $]0,1]$ and continuous in $0$ and $\eta_i$ are convex, then there exists a unique optimum for \eqref{e:minop}.
\item If $h_i$ are differentiable and continuous in $0$, then \eqref{CNS} is a necessary optimality condition.
\item If $h_i$ are differentiable in $]0,1]$ and continuous in $0$ and $\eta_i$ are convex, then \eqref{CNS} is a necessary and sufficient optimality condition.
\end{itemize}

We remark in passing what follows: in the general setting of the Monge problem, the ``distance'' $|y-x|^p$ is replaced by any lower semi-continuous function $c(x,y)$, usually called cost function. But in fact we have used only two features of the particular choice $c(x,y)=|y-x|^p$, namely, the existence and uniqueness of an optimal transport map and the fact that for any $y\in\Omega$ the level sets of $c(\cdot,y)$ are negligible. Hence our results can be extended for general costs $c(x,y)$ for which both properties hold true, such as $c(x,y) = |y-x|^p$ with $0<p<1$ (see \cite{GM}) or $c(x,y) = \|y-x\|$ for different kinds of norm (see \cite{CFM} and \cite{cristal}).


\section{Existence, uniqueness and charaterization of the equilibrium}\label{Individual}

\subsection{Definition of the equilibrium}

In this section we start to consider the situation from the point of view of the single citizen. 
Assume for the moment that the sets $A_i$, and so the quantities $c_i$, are given: 
the single citizen is probably not interested whether the partition $(A_i)_{i=1,\ldots,k}$ is optimal in the global 
sense that we discussed up to now. Indeed, it is quite convincing that he does not even know the sets $A_i$, 
nor the quantities $c_i$, nor the functions $h_i$: what is meaningful, is that he only knows the quantities $h_i(c_i)$, i.e.~the queue times in the various services. 
Then, of course, a citizen living at $x\in\Omega$ and going to $x_i$ shall be ``satisfied'' if and only if
$$
|x-x_i|^p+h_i(c_i) = \min_{j=1,\ldots,k} |x-x_j|^p+h_j(c_j)\,.
$$

In the context of game theory this is precisely a Nash equilibrium (see Section 12.3 of \cite{Aubin}): each player is satisfied in the sense that his strategy (i.e.~the choice of the service) is the best possible, once the behaviour of the other players (in this case, the sets $A_i$) is fixed. We give the following definition.

\begin{definition}
We say that a partition $(A_i)_{i=1,\ldots,k}$ of $\Omega$ is an {\em equilibrium} if for every $i=1,\ldots,k$ the following condition holds:
\begin{equation}\label{CE}
\left\{
\begin{array}{l}
A_i = \Big\{x \in \Omega \; : \; |x-x_i|^p +h_i(c_i) < |x-x_j|^p + h_j(c_j) \quad \text{for every $j \not = i$} \Big\} \\ \\
c_i = \int_{A_i} f(x) \, dx \,.
\end{array}
\right.
\end{equation}
\end{definition}

We remark that this type of equilibrium is non-cooperative: each citizen chooses the service just by himself, without collaborating with the other citizens in order to choose a ``better'' global strategy. In Theorem \ref{exEqui} and Proposition \ref{exuninop} we show that there exists an equilibrium, under fairly general assumptions: we remark that this is not a trivial result, since in general game theory it is typically a difficult task to show existence of a Nash equilibrium.

\subsection{Existence and uniqueness of the equilibrium}

In this subsection we show how it is possible to formulate an auxiliary problem in such a way that the optimum for this new problem corresponds to the equilibrium for the original problem. Thus we easily deduce some existence and uniqueness results for the equilibrium, relying on the results presented in the previous section. 

Assume that the functions $h_i$ are continuous. We introduce for $i=1,\ldots,k$ the functions $g_i : [0,1] \to [0,+\infty[$ defined by
$$
g_i(t) = \left\{
\begin{array}{ll}
\displaystyle{1\over t}\int_0^t h_i(s)\ ds & \;\text{if $0 < t \leq 1$} \\ \\
h_i(0) & \;\text{if $t=0$.}
\end{array}\right.
$$
It is immediate that the functions $g_i$ are continuous and that $t \mapsto tg_i(t)$ are differentiable everywhere in $[0,1]$ with derivative $h_i(t)$.

\begin{proposition}\label{linkOpEqu}
Assume that the functions $h_i$ are continuous.
Every partition $(A_i)_{i=1,\ldots,k}$ of $\Omega$ which is a minimizer of the problem
\begin{equation}\label{e:auprob}
\inf_{\substack{(A_i)_{i=1,\ldots,k} \\ \text{{\rm partition of $\Omega$}}}} 
\left\{\sum_{i=1}^k \int_{A_i} \left[| x-x_i|^p + g_i \left(\int_{A_i}f(x)\, dx \right)\right]  f(x) \, dx \right\}
\end{equation}
is an equilibrium for the original problem. If in addition the functions $h_i$ are non-decreasing then every equilibrium for the original problem is a minimizer of \eqref{e:auprob}.
\end{proposition}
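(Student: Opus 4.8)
The plan is to observe that the auxiliary problem \eqref{e:auprob} is nothing but problem \eqref{e:minop} with the queue functions $h_i$ replaced by the averaged functions $g_i$, so that every result of Section \ref{ECNS} applies verbatim after this substitution. The algebraic fact that makes everything work, already recorded right before the statement, is that the ``$\eta$-function'' attached to $g_i$, namely $t\mapsto tg_i(t)=\int_0^t h_i(s)\,ds$, is differentiable on $[0,1]$ with derivative $h_i$; equivalently $g_i(t)+tg_i'(t)=h_i(t)$ for $t\in\,]0,1]$, while at $t=0$ the identity $g_i(0)=h_i(0)$ together with the convention $0\,g_i'(0)=0$ plays the same role. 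Before invoking the earlier propositions one has to check their hypotheses for the $g_i$: each $g_i$ is differentiable on $]0,1[$ with a left derivative at $1$ (immediate from the continuity of $h_i$, which gives $g_i'(t)=(h_i(t)-g_i(t))/t$), and $g_i$ is continuous at $0$ because the integral mean of the continuous function $h_i$ on $[0,t]$ tends to $h_i(0)$ as $t\to 0^+$.

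For the first assertion, let $(A_i)_{i=1,\ldots,k}$ be a minimizer of \eqref{e:auprob}; then it is an optimum for \eqref{e:minop} with $g_i$ in place of $h_i$, so Proposition \ref{propNC} (applied with $g_i$ instead of $h_i$) gives, for every $i$,
\[
A_i=\Big\{x\in\Omega \; : \; |x-x_i|^p+g_i(c_i)+c_ig_i'(c_i)<|x-x_j|^p+g_j(c_j)+c_jg_j'(c_j)\ \ \forall\,j\neq i\Big\},
\]
together with $c_i=\int_{A_i}f(x)\,dx$. Substituting $g_i(c_i)+c_ig_i'(c_i)=h_i(c_i)$, and likewise for the index $j$, turns this into exactly the equilibrium condition \eqref{CE}, so $(A_i)_{i=1,\ldots,k}$ is an equilibrium.

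For the second assertion, assume moreover that the $h_i$ are non-decreasing. Then $t\mapsto tg_i(t)=\int_0^t h_i(s)\,ds$ has non-decreasing derivative $h_i$ and is therefore convex, which is precisely the convexity hypothesis required of the ``$\eta$-functions'' of the auxiliary problem. Let now $(A_i)_{i=1,\ldots,k}$ be any equilibrium, i.e.\ satisfy \eqref{CE}; by the identity above, \eqref{CE} is exactly condition \eqref{CNS} for the auxiliary problem, so Proposition \ref{p:suffop} (applied with $g_i$ instead of $h_i$) yields that $(A_i)_{i=1,\ldots,k}$ is the unique minimizer of \eqref{e:auprob}.

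The only delicate point is bookkeeping rather than mathematics: one must make sure the substitution $h_i\mapsto g_i$ is legitimate at the endpoints $t=0$ and $t=1$, in particular that the conventions implicit in \eqref{CNS} (the meaning of $c_ig_i'(c_i)$ when $c_i=0$, and the left derivative at $c_i=1$) are compatible with the identity $g_i(t)+tg_i'(t)=h_i(t)$; apart from this, no idea beyond Propositions \ref{propNC} and \ref{p:suffop} is needed.
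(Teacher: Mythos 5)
Your proof is correct and follows essentially the same route as the paper: apply Proposition \ref{propNC} to the auxiliary problem with $g_i$ in place of $h_i$ and use the identity $\big(tg_i(t)\big)'=h_i(t)$ to turn \eqref{CNS} into \eqref{CE}, then for the converse use the convexity of $t\mapsto tg_i(t)$ (from $h_i$ non-decreasing) together with Proposition \ref{p:suffop}. Your extra verification of the endpoint hypotheses for $g_i$ is a welcome bit of care that the paper leaves implicit.
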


\begin{proof}
Consider a minimizer $(A_i)_{i=1,\ldots,k}$ of \eqref{e:auprob}. Since the functions $g_i$ are continuous in $0$ and the functions $t g_i(t)$ are differentiable in $[0,1]$ we can apply Proposition \ref{propNC} to deduce that $(A_i)_{i=1,\ldots,k}$ satisfies the optimality condition \eqref{CNS}, but in view of the definition of $g_i$ this condition is precisely \eqref{CE}: recall that $\big( tg_i(t) \big)' = h_i(t)$. Thus $(A_i)_{i=1,\ldots,k}$ is an equilibrium for the original problem. If $h_i$ is non-decreasing for all $i=1,\ldots, k$, we immediately obtain the convexity of the functions $t \mapsto tg_i(t)$, thus we can apply Proposition \ref{p:suffop}, obtaining precisely that every equilibrium for the original problem is a minimizer of \eqref{e:auprob}.
\end{proof}

\begin{rem}{\rm Arguing similarly to the previous proof, we can also show that, under the assumption that the maps $\eta_i$ defined in \eqref{e:eta} are differentiable in $[0,1]$, any optimum for \eqref{e:minop} is an equilibrium for the problem with queue functions 
$$
\tilde h _i (t) = h_i(t) + t h'_i(t) \qquad  \qquad i=1,\ldots,k \,.
$$}
\end{rem}

Using the correspondence given by the previous proposition, it is now easy to show the following theorem.

\begin{thm}\label{exEqui}
Assume that the functions $h_i$ are continuous. Then there exists an equilibrium. If in addition the functions $h_i$ are non-decreasing then the equilibrium is unique.
\end{thm}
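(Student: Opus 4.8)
The plan is to reduce both statements to results already proved for the global problem \eqref{e:minop}, via the correspondence of Proposition \ref{linkOpEqu}. The crucial structural remark is that the auxiliary problem \eqref{e:auprob} is literally an instance of \eqref{e:minop}, with the queue functions $h_i$ replaced by the averaged functions $g_i$ and, consequently, with the functions $\eta_i$ of \eqref{e:eta} replaced by $t\mapsto t g_i(t)=\int_0^t h_i(s)\,ds$. So the whole existence/uniqueness machinery of Section \ref{ECNS} applies to \eqref{e:auprob}, provided we verify for $g_i$ and $t g_i(t)$ the hypotheses used there.

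For existence: since each $h_i$ is continuous, $g_i(t)=\frac1t\int_0^t h_i(s)\,ds$ is continuous on $]0,1]$ and, by the definition $g_i(0)=h_i(0)$, continuous at $0$ as well; in particular the $g_i$ are lower semi-continuous. Hence Proposition \ref{exists}(i), applied to \eqref{e:auprob}, furnishes a partition $(A_i)_{i=1,\ldots,k}$ minimizing \eqref{e:auprob}, and by the first part of Proposition \ref{linkOpEqu} this partition is an equilibrium for the original problem.

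For uniqueness, assume moreover that the $h_i$ are non-decreasing. An equilibrium is by definition a partition satisfying \eqref{CE}, and the key observation is that \eqref{CE} is precisely the necessary optimality condition \eqref{CNS} for \eqref{e:auprob}: indeed $\big(t g_i(t)\big)'=h_i(t)$, so that $g_i(c_i)+c_i g_i'(c_i)=h_i(c_i)$ and the two systems coincide. It therefore suffices to show that at most one partition satisfies \eqref{CNS} relative to \eqref{e:auprob}. We invoke Lemma \ref{Unicity} with the $g_i$ playing the role of the queue functions: the $g_i$ are differentiable on $]0,1]$ (as $t\mapsto\int_0^t h_i(s)\,ds$ is of class $C^1$) and continuous at $0$, while the associated maps $t\mapsto t g_i(t)=\int_0^t h_i(s)\,ds$ are convex, being primitives of the non-decreasing functions $h_i$. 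Lemma \ref{Unicity} then gives at most one such partition, hence at most one equilibrium; together with the existence part, this proves uniqueness.

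The argument is essentially a verification of hypotheses, so I do not expect a serious obstacle; the one point to handle with care is the identification of the equilibrium system \eqref{CE} with the optimality system \eqref{CNS} for the modified queue functions $g_i$, together with the check that $g_i$ and $t g_i(t)$ inherit from the $h_i$ exactly the differentiability, continuity at $0$, and convexity required to apply Lemma \ref{Unicity} (equivalently, Proposition \ref{p:suffop}, which would give the same conclusion by first passing through minimizers of \eqref{e:auprob} using the second part of Proposition \ref{linkOpEqu}).
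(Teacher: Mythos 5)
Your proposal is correct and follows essentially the same route as the paper: existence via Proposition \ref{exists} applied to the auxiliary problem \eqref{e:auprob} combined with Proposition \ref{linkOpEqu}, and uniqueness via the identification $\big(t g_i(t)\big)'=h_i(t)$ (so that \eqref{CE} is \eqref{CNS} for the modified queue functions) together with the convexity of $t\mapsto\int_0^t h_i(s)\,ds$. The only cosmetic difference is that you invoke Lemma \ref{Unicity} directly, whereas the paper routes the same argument through the second part of Proposition \ref{linkOpEqu} and Proposition \ref{p:suffop}; as you note yourself, these are equivalent.
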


\begin{proof}
Under the assumptions of the theorem we have that the functions $t \mapsto t g_i(t)$ are lower semi-continuous, thus applying Proposition \ref{exists} we obtain the existence of a minimizer of \eqref{e:auprob},  and using the result of Proposition \ref{linkOpEqu} we deduce that this minimizer is an equilibrium for the original problem. If the functions $h_i$ are non-decreasing we apply again Proposition \ref{linkOpEqu}, obtaining that every equilibrium for the original problem is a minimizer of \eqref{e:auprob}. But since $h_i$ are non-decreasing we also deduce that the maps $t \mapsto t g_i(t)$ are convex, thus using Proposition \ref{p:suffop} we deduce that \eqref{e:auprob} has a unique solution, and this concludes the proof.
\end{proof}

\subsection{A direct proof in the case $k=2$.}

We now give an alternative and more direct proof of the above result in the particular case $k=2$. This has also the advantage of introducing some ideas and notation that will be important in the dynamical analysis of Section \ref{s:dyna}. In what follows, we will make extensively use of the following definitions:
\begin{align}\label{e:taum}
\tau(x) = |x-x_1|^p - |x-x_2|^p\,,  && m(t) = \int_{\{x \,:\, \tau(x) < t\}} f(x)\, dx\,.
\end{align}
The condition in \eqref{CE} defining the equilibrium can be read in this context as follows: {\em a partition $(A_1,A_2)$ of $\Omega$ is an equilibrium when there exists $\bar t\in \R$ such that:
\begin{equation}
\label{formastandard}
A_1 = \big\{x\in\Omega \,:\, \tau(x) < \bar t \big\},\quad
A_2 = \big\{x\in\Omega \,:\, \tau(x) > \bar t \big\}
\end{equation}
\begin{equation}\label{optind}
h_2\big(1-m(\bar t)\big) - h_1\big(m(\bar t)\big) = \bar t\,.
\end{equation}
}
\begin{proposition}\label{exuninop}
Assume that the functions $h_1$ and $h_2$ are continuous and non-decreasing. 
Then there exists a unique equilibrium $(A_1,A_2)$.
\end{proposition}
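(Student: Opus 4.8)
The plan is to reduce the whole statement to a one-dimensional fixed-point problem, via the reformulation of the equilibrium condition recorded just before the proposition. First I would verify that equivalence: if $(A_1,A_2)$ satisfies \eqref{CE} with $k=2$, then the defining condition for $A_1$ reads $\tau(x) < h_2(c_2)-h_1(c_1)$, so setting $\bar t := h_2(c_2)-h_1(c_1)$ gives \eqref{formastandard}; since $(A_1,A_2)$ is a partition, $\{\tau=\bar t\}$ is $f$-negligible, hence $c_1=m(\bar t)$ and $c_2=1-m(\bar t)$, and then $\bar t=h_2(c_2)-h_1(c_1)$ becomes precisely \eqref{optind}. Conversely, any $\bar t$ solving \eqref{optind} (together with the negligibility of $\{\tau=\bar t\}$) produces via \eqref{formastandard} a partition satisfying \eqref{CE}. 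Thus the proposition amounts to showing that there is exactly one real number $\bar t$ with $h_2\big(1-m(\bar t)\big)-h_1\big(m(\bar t)\big)=\bar t$; the sets $A_1,A_2$ are then determined by $\bar t$ up to $f$-negligible sets, so uniqueness of $\bar t$ yields uniqueness of the equilibrium.

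The one preliminary fact I need is that for every $t\in\R$ the level set $\{x\in\Omega:\tau(x)=t\}$ is $\L^d$-negligible — the analogue for $\tau$ of the property of the cost $|x-y|^p$ already used in the paper, and true because $\tau$ is real-analytic and non-constant away from $x_1,x_2$. Since $\mu\ll\L^d$, these level sets are $f$-negligible, so the function $m$ — which is clearly nondecreasing — is in fact continuous (continuity at $t$ is exactly $\mu(\{\tau=t\})=0$); moreover, as $\tau$ is bounded on the bounded set $\Omega$, one has $m(t)=0$ for $t$ sufficiently negative and $m(t)=1$ for $t$ sufficiently large.

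I would then conclude as follows. Set $\Psi(t):=h_2\big(1-m(t)\big)-h_1\big(m(t)\big)-t$, which is continuous as a composition of the continuous maps $h_1,h_2,m$. As $t\to-\infty$ the term $-t\to+\infty$ while $h_2(1-m(t))-h_1(m(t))\to h_2(1)-h_1(0)$ stays bounded, so $\Psi(t)\to+\infty$; symmetrically $\Psi(t)\to-\infty$ as $t\to+\infty$. By the intermediate value theorem $\Psi$ has a zero $\bar t$, which produces an equilibrium. For uniqueness, note that $t\mapsto 1-m(t)$ is nonincreasing and $h_2$ nondecreasing, so $t\mapsto h_2(1-m(t))$ is nonincreasing; likewise $t\mapsto -h_1(m(t))$ is nonincreasing; adding the strictly decreasing term $-t$ shows that $\Psi$ is strictly decreasing, hence has at most one zero. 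The only mildly delicate point in this argument is the negligibility of the level sets of $\tau$ (which simultaneously guarantees the continuity of $m$ and that the threshold sets in \eqref{formastandard} genuinely partition $\Omega$); granting it, existence is a single application of the intermediate value theorem and uniqueness a single monotonicity observation.
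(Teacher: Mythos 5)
Your argument is essentially identical to the paper's: the paper introduces $U(t)=t-h_2(1-m(t))+h_1(m(t))$, which is just $-\Psi(t)$, observes it is continuous and strictly increasing with $U(t)\to\pm\infty$ as $t\to\pm\infty$, and concludes by the intermediate value theorem that there is a unique $\bar t$ with $U(\bar t)=0$, i.e.\ satisfying \eqref{optind}. You additionally spell out the reduction of \eqref{CE} to \eqref{formastandard}--\eqref{optind} and the $f$-negligibility of the level sets of $\tau$ (needed for the continuity of $m$), which the paper takes for granted; this is a welcome refinement rather than a different proof.
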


\begin{proof}
Let us consider the map 
$$
U (t) = t-h_2 \big(1-m(t) \big) + h_1 \big(m(t)\big) \,.
$$
It is a continuous and strictly increasing map. Notice that for $t\ge \sup_\Omega \tau$ we have $m(t)=1$ and $U(t)=t-h_2(0)+h_1(1)$ so that $\lim_{t\to +\infty} U(t)=+\infty$. In the same way for $t\le \inf_\Omega \tau$ we have $m(t)=0$ and $U(t)=t-h_2(1)+h_1(0)$, thus $\lim_{t\to -\infty} U(t)=-\infty$. By consequence, there exists a unique ${\bar t}$ such that $U(\bar t)=0$, that is, \eqref{optind} holds. Then the partition $(A_1,A_2)$ associated to $\bar t$ as in \eqref{formastandard} is an equilibrium and this is unique.
\end{proof}

\subsection{A comparison between optimum and equilibrium}\label{ss:compar}

Recall that the optimality condition for the optimum obtained in Proposition \ref{propNC}, read with the notation of the previous subsection, was
$$
 h_2 \left(1-m \left(\hat t \right) \right) + \left(1-m \left(\hat t \right)\right)  h'_2 \left(1-m \left(\hat t \right)\right)- h_1\left(m \left(\hat t \right)\right) -m \left(\hat t\right) h'_1 \left(m \left(\hat t \right) \right)
= \hat t \,.
$$
Thus a comparison with \eqref{optind} shows how deep is the difference between the two conditions of optimality. In this subsection we give two explicit examples in which we see in a qualitative way some differences between the optimum and the equilibrium.

\begin{example}{\rm 
On a beach represented by $\Omega=[0,1]$ there are two ice-cream shops at coordinates $x_1=1/4$ and $x_2=3/4$. Suppose 
there are less employes in the second shop, so that $h_1(t)=t$ and $h_2(t)=(1+\e)t$. Assume that the costumers are uniformly distributed
on the beach, that is $f \equiv 1$, and take $p=2$.

In this case the optimum $(A_1,A_2)$ is given by
$$
A_1=[0,\lambda_\e^{\rm opt}[\,,\quad A_2=]\lambda_\e^{\rm opt}, 1] \qquad \mbox{ with } \lambda_\e^{\rm opt}={1\over 2} + {\e\over 5+2\e} \,,
$$
whereas the equilibrium $(B_1,B_2)$ is
$$
B_1 = [0,\lambda_\e^{\rm eq}[ \,, \quad B_2 = ]\lambda_\e^{\rm eq}, 1] \qquad \mbox{ with } \lambda_\e^{\rm eq}={1\over 2} + {\e\over 6 + 2\e} \leq \lambda_\e^{\rm opt}\,.
$$
The costs for the customers in the two cases are given by:
\begin{equation*}
\begin{array}{|c|c|c|}
\hline & &\\[-10pt]
\begin{array}{c} \text{cost on the}\\ \text{optimum} \end{array} & \begin{array}{c} \text{cost on the}\\ \text{equilibrium} \end{array} & \text{position} \\[10pt]
\hline && \\
| x-1/4 |^2 + \lambda_\e^{\rm opt} & | x-1/4 |^2 + \lambda_\e^{\rm eq} & 0 \leq x \leq \lambda_\e^{\rm eq} \\[10pt]
| x-1/4 |^2 + \lambda_\e^{\rm opt} &  | x-3/4 |^2 +(1+\e)(1-\lambda_\e^{\rm eq}) & \lambda_\e^{\rm eq} \leq x \leq \lambda_\e^{\rm opt} \\[10pt]
| x-3/4 |^2 + (1+\e)(1-\lambda_\e^{\rm opt}) &  | x-3/4 |^2 + (1+\e)(1-\lambda_\e^{\rm eq}) & \lambda_\e^{\rm opt} \leq x \leq 1 \\[5pt]
\hline
\end{array}
\vspace{8pt}
\end{equation*}
Note that, in the case $\e=0$, the situations of optimum and equilibrium coincides and we have 
$A_1 = B_1 = [0,1/2[$ and $A_2 = B_2 = ]1/2,1]$. In the case $\e>0$, despite
the global cost is minimized by the partition $(A_1,A_2)$, in the equilibrium situation more than one half of the costumers pay less than in the optimum situation.}
\end{example}

\begin{example}\label{ex:jap}
{\rm  Let us now take $\Omega=[0,1]$, $f \equiv 1$, $x_1=0$, $x_2=1$, $p=1$ and 
$$
h_1(s) \equiv 100  \qquad \text{ and } \qquad
h_2(s) = \left\{\begin{array}{ll}
0 & \text{ for $0\leq s\le 0.999$} \\
1 & \text{ for $0.999 < s \leq 1$.}
\end{array}\right.
$$
It is clear that the unique equilibrium is $A_1=\emptyset$ and $A_2=[0,1]$, whereas the optimum is obtained with $A_1=[0,0.001[$ and $A_2= ]0.001,1]$. Notice that the optimum is very unfair for costumers living in $A_1$, who pay $100+x$, whereas the other costumers just pay the distance 
from $1$. Note that the result would have the same features with $h_2$ equal to $0$ on $[0,0.998]$, equal to $1$ on $[0.999,1]$ and increasing smoothly from $0$ to $1$ on $]0.998,0.999[$. }
\end{example}

\subsection{A comparison with Pareto optimum}

The examples given in the previous subsection show that in some cases the global optimum can really be not convenient for some citizens; in particular, in Example~\ref{ex:jap} the optimum is better than the equilibrium for $99.9\%$ of the citizens, even though it is much worse for the remaining $0.1\%$. In some classical games (like the well-known prisoner's dilemma, see for instance Section 7.8 of \cite{Aubin}) the equilibrium is in fact a bad strategy for \emph{all} players, in the sense that there is a situation which is better for everybody. 

On the contrary, we are going to see that in our model this can not happen. Even more, given an equilibrium, we show that it is not possible to find a situation in which every citizen spend less time. More precisely, and introducing a further notion from game theory, we are going to see that every equilibrium is a Pareto optimum for the problem. This means that, in our situation, starting from the non-cooperative Nash equilibrium it is not possible to lower the costs of all the citizens by a cooperation in the choice of the services.

Let us introduce the individual cost function
\begin{equation}\label{e:C}
C(x,(B_i)_i) = \sum_{i=1}^k \left[ | x-x_i |^p +h_i\left(\int_{B_i} f(x) \,dx\right) \right] \indic_{B_i}(x) \,,
\end{equation}
defined for $x\in\Omega$ and $(B_i)_{i=1,\ldots,k}$ partition of $\Omega$.  We say that a partition $(A_i)_{i=1,\ldots, k}$ of $\Omega$ is a {\em Pareto optimum} (see also Section 12.5 of \cite{Aubin}) if there exists no partition $(B_i)_{i=1,\ldots,k}$ of $\Omega$ satisfying
\[
C(x,(B_i)_i) \leq C(x,(A_i)_i) \qquad  \text{for $f$-a.e.~$x \in \Omega$}
\]
with strict inequality in a set of strictly positive measure. With this definition of $C$ the following equivalence is immediate:
\begin{equation}\label{equivalence}
(A_i)_{i=1,\ldots,k}\, \hbox{is an equilibrium} 
\, \Longleftrightarrow \,
C(x,(A_i)_i) = \min_{j=1,\ldots,k} \bigg\{ | x-x_j |^p + h_j \bigg(\int_{A_j} f(x) \, dx \bigg) \bigg\} \,.
\end{equation}

We establish now the following result of independent interest, which makes a connection between the equilibrium and the optimal transport map from $\mu=\f$ to the atomic measure concentrated on the points $x_1,\ldots,x_k$.

\begin{proposition}[Link between equilibrium and optimal transportation] \label{LTE}
Let $(A_i)_{i=1,\ldots,k}$ be a partition of $\Omega$, $\mu=\f$, $c_i=\int_{A_i} f$, $\nu=\sum_{i=1}^k c_i\delta_{x_i}$ and $T=\sum_{i=1}^k x_i\indic_{A_i}$; let moreover $u(x)= C(x,(A_i)_i)$ and $v(x_i)=-h_i(c_i)$. Then the couple $(u,v)$ is optimal for the dual problem~(\ref{dual}) if and only if $(A_i)_{i=1,\ldots,k}$ is an equilibrium. Moreover, in this case $T$ is an optimal transport map between $\mu$ and $\nu$.
\end{proposition}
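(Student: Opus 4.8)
The plan is to reduce the whole statement to two elementary facts: a duality identity valid for \emph{any} partition, and the observation that admissibility of the pair $(u,v)$ in the dual problem \eqref{dual} is already equivalent to the equilibrium condition. First I would record the identity
$$
\int_\Omega u \, d\mu + \sum_{i=1}^k c_i\, v(x_i) \; = \; \int_\Omega |x-T(x)|^p f(x) \, dx \,.
$$
It follows from a one-line computation: writing $u = C(\cdot,(A_i)_i)$ and splitting the integral over the cells $A_i$ gives $\sum_i \int_{A_i} |x-x_i|^p f \, dx + \sum_i c_i h_i(c_i)$, and the last sum is exactly cancelled by $\sum_i c_i v(x_i) = -\sum_i c_i h_i(c_i)$, leaving $\sum_i\int_{A_i}|x-x_i|^p f\,dx = \int_\Omega|x-T(x)|^p f\,dx$ because $T\equiv x_i$ on $A_i$. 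Since $T$ is a transport map from $\mu$ to $\nu$ (as $c_i = \mu(A_i)$) and $\mu\ll\L^d$, Theorem \ref{exist} gives $\int_\Omega|x-T(x)|^p f\,dx \geq M_p^p(\mu,\nu) = W_p^p(\mu,\nu)$. Moreover $u$ is bounded on the bounded set $\Omega$ (there $|x-x_i|^p$ is bounded and the $h_i(c_i)$ are finite constants) and $v$ takes finitely many finite values, so $u\in{\rm L}^1_\mu(\Omega)$ and $v\in{\rm L}^1_\nu(\Omega)$.

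Next I would prove the key equivalence: \emph{the pair $(u,v)$ is admissible in \eqref{dual} if and only if $(A_i)_{i=1,\ldots,k}$ is an equilibrium}. Since $v(x_i) = -h_i(c_i)$, the dual constraint ``$u(x)+v(x_i)\leq|x-x_i|^p$ for every $i$ and $\mu$-a.e.~$x$'' reads precisely ``$u(x)\leq \min_{j=1,\ldots,k}\{|x-x_j|^p+h_j(c_j)\}$ for $f$-a.e.~$x$''. On the other hand, for $f$-a.e.~$x$ the point $x$ belongs to exactly one cell $A_{j}$, so that $u(x)=|x-x_{j}|^p+h_{j}(c_{j})\geq \min_{j}\{|x-x_j|^p+h_j(c_j)\}$ automatically. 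Hence admissibility of $(u,v)$ is equivalent to the equality $u(x)=\min_{j}\{|x-x_j|^p+h_j(c_j)\}$ for $f$-a.e.~$x$, which by \eqref{equivalence} is exactly the statement that $(A_i)$ is an equilibrium. Here it is essential that $\nu=\sum_i c_i\delta_{x_i}$, so that the constraint in \eqref{dual} is imposed for \emph{every} index $i$, including those with $c_i=0$; this is what makes the argument work even when some cell is $f$-negligible.

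It remains to put the pieces together. If $(A_i)$ is an equilibrium, then $(u,v)$ is admissible, so by weak duality (Theorem \ref{t:dual}) $\int_\Omega u\,d\mu + \sum_i c_i v(x_i)\leq W_p^p(\mu,\nu)$; combined with the identity and the reverse inequality of the first paragraph, all these quantities coincide, so $(u,v)$ is optimal for \eqref{dual} and $\int_\Omega|x-T(x)|^p f\,dx = W_p^p(\mu,\nu)=M_p^p(\mu,\nu)$, i.e.~$T$ is an optimal transport map; this yields one implication together with the ``moreover''. Conversely, if $(u,v)$ is optimal for \eqref{dual} it is in particular admissible, and the equivalence then gives that $(A_i)$ is an equilibrium. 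I do not expect a serious obstacle: the only delicate points are the bookkeeping of $f$-negligible sets and the handling of empty cells, dealt with as explained above. An equivalent route for the implication ``equilibrium $\Rightarrow$ $(u,v)$ optimal and $T$ optimal'' would be to observe that, via \eqref{equivalence} and the definitions of $u$ and $v$, the equilibrium condition is precisely condition \eqref{Bouchi} for the partition $(A_i)$ and the points $x_i$, and then to invoke Proposition \ref{cellShape}(ii) directly.
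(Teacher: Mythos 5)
Your proof is correct and follows essentially the same route as the paper's: the identity $\int_\Omega u\,d\mu+\sum_i c_i v(x_i)=\int_\Omega|x-T(x)|^p\,d\mu$, the observation via \eqref{equivalence} that admissibility of $(u,v)$ is equivalent to the equilibrium condition, and the conclusion by weak duality. Your version merely spells out a few details (integrability, the role of empty cells, the reverse inequality) that the paper leaves implicit.
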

\begin{proof}
First of all, one can notice that the cost of the transport $T$ equals $\int_\Omega u \,d\mu+ \sum_i c_i v(x_i)$: indeed, one has
\[\begin{split}
\int_\Omega |x-T(x)|^p\,d\mu &= \sum_{i=1}^k \int_{A_i} | x-x_i |^p f(x) \, dx
=\sum_{i=1}^k \int_{A_i} \Big[|x-x_i|^p +h_i(c_i)\Big]f(x) \, dx + \sum_{i=1}^k c_i v(x_i)\\
&= \int_\Omega u(x)\,d\mu(x) + \int_\Omega v(x)\,d\nu(x) \,.
\end{split}\]
Thanks to Theorem~\ref{t:dual} we deduce that, if $(u,v)$ is admissible for the dual problem, then $T$ is an optimal transport map and $(u,v)$ is optimal for the dual problem.\par
On the other hand, the property~(\ref{equivalence}) exactly means that $(u,v)$ is admissible for the dual problem if and only if $(A_i)_{i=1,\ldots,k}$ is an equilibrium. These two considerations give the thesis.
\end{proof}
Notice that, in the situation of the proposition above, it may happen that $T$ is an optimal map but $(u,v)$ is \emph{not} optimal for the dual problem and $(A_i)_{i=1,\ldots,k}$ is \emph{not} an equilibrium.\par
We now use Proposition~\ref{LTE} to prove that an equilibrium is a Pareto optimum.
\begin{proposition}\label{Pareto}
Assume that the maps $h_i$ are strictly increasing and let $(A_i)_{i=1,\ldots,k}$ be an equilibrium. Let $(B_i)_{i=1,\ldots,k}$ be a partition of $\Omega$ such that
\begin{equation}\label{e:cpar}
C(x, (B_i)_i) \le C(x,(A_i)_i) \qquad \qquad \text{for $f$-a.e.~$x \in \Omega$.}
\end{equation}
Then the partitions $(A_i)_{i=1,\ldots,k}$ and $(B_i)_{i=1,\ldots,k}$ coincide.
\end{proposition}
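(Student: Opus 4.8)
The plan is to argue pointwise with the individual cost $C$ and then invoke the uniqueness results already available. Adopt the notation of Proposition~\ref{LTE}: set $c_i=\int_{A_i}f$, $\mu=\f$, $\nu=\sum_{i=1}^k c_i\delta_{x_i}$, and also $d_i=\int_{B_i}f$. Since $(A_i)_i$ is an equilibrium, Proposition~\ref{LTE} gives that $u(x)=C(x,(A_i)_i)$ and $v(x_i)=-h_i(c_i)$ form an optimal pair for the dual problem~\eqref{dual} and that $T=\sum_i x_i\indic_{A_i}$ is the optimal transport map from $\mu$ to $\nu$; in particular, by~\eqref{equivalence}, $C(x,(A_i)_i)=\min_j\{|x-x_j|^p+h_j(c_j)\}$ for $f$-a.e.~$x$.

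First I would show that $d_i=c_i$ for every $i$. For $f$-a.e.~$x$ let $i$ be the ($f$-a.e.~unique) index with $x\in B_i$; then $C(x,(B_i)_i)=|x-x_i|^p+h_i(d_i)$, and chaining the hypothesis~\eqref{e:cpar} with the equilibrium identity above gives $|x-x_i|^p+h_i(d_i)\le C(x,(A_i)_i)\le|x-x_i|^p+h_i(c_i)$, so $h_i(d_i)\le h_i(c_i)$ and, by strict monotonicity of $h_i$, $d_i\le c_i$. This produces $d_i\le c_i$ for every index $i$ with $\mu(B_i)>0$, while the inequality is trivial when $\mu(B_i)=0$; since $\sum_i d_i=\sum_i c_i=1$, we conclude $d_i=c_i$ for all $i$.

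Now that $d_i=c_i$, we have $C(x,(B_i)_i)=\sum_i\big[|x-x_i|^p+h_i(c_i)\big]\indic_{B_i}(x)\ge\min_j\{|x-x_j|^p+h_j(c_j)\}=C(x,(A_i)_i)$ for $f$-a.e.~$x$, so together with~\eqref{e:cpar} we get $C(x,(B_i)_i)=C(x,(A_i)_i)$ $f$-a.e., and moreover for $f$-a.e.~$x$ the minimum over $j$ is attained at the index $i$ with $x\in B_i$. Reading this through~\eqref{equivalence} with the masses $d_i=c_i$, it says precisely that $(B_i)_i$ is itself an equilibrium; hence uniqueness of the equilibrium (Theorem~\ref{exEqui}, applicable since strictly increasing functions are in particular non-decreasing) forces $A_i=B_i$ for every $i$. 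Equivalently one may finish via transport: the costs and the masses being equal, $S=\sum_i x_i\indic_{B_i}$ has transport cost $\sum_i\int_{B_i}|x-x_i|^p f\,dx=W_p^p(\mu,\nu)$, so $S$ is optimal and therefore coincides with $T$ $f$-a.e.~by uniqueness of the optimal transport map from a diffuse to an atomic measure (Theorem~\ref{exist} for $p>1$, Corollary~\ref{c:uniqdir} for $p=1$), whence again $A_i=B_i$.

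The only delicate point is the middle step: the pointwise estimate directly yields $d_i\le c_i$ only for the indices $i$ that a positive-measure set of points actually selects, and one must then notice that the remaining indices satisfy $\mu(B_i)=0$, so the global mass balance can be used to turn all the inequalities into equalities. Everything else is routine bookkeeping with $C$, the equivalence~\eqref{equivalence}, and the quoted uniqueness statements.
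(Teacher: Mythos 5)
Your proof is correct and, in its essential structure, the same as the paper's: you first force $\int_{B_i}f=\int_{A_i}f$ by chaining \eqref{e:cpar} with the equilibrium identity and the strict monotonicity of $h_i$ (your version, deriving $d_i\le c_i$ for every index and closing with the mass balance, is a slightly cleaner rephrasing of the paper's argument by contradiction on a single index), and you then conclude via optimal transport: with equal masses and equal costs, $S=\sum_i x_i\indic_{B_i}$ is an optimal map to the same atomic measure as $T=\sum_i x_i\indic_{A_i}$, so the two coincide by Theorem \ref{exist} and Corollary \ref{c:uniqdir}. This transport-theoretic finish is exactly the one in the paper and is fully justified under the stated hypotheses. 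One caveat concerns the \emph{other} finish you propose first: the uniqueness clause of Theorem \ref{exEqui} requires the $h_i$ to be \emph{continuous} (and non-decreasing), whereas Proposition \ref{Pareto} only assumes them strictly increasing, and a strictly increasing function need not be continuous; so invoking Theorem \ref{exEqui} is not legitimate as stated. That route is easily repaired without the theorem: once you know $(B_i)_i$ is an equilibrium with the same masses $c_i$, condition \eqref{CE} exhibits $B_i$ and $A_i$ as the very same explicit set, so no abstract uniqueness statement is needed. Since you also supply the transport-based argument, the proof as a whole stands.
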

\begin{proof}
We first show that for every $i=1,\ldots,k$ the equality $\int_{A_i}f(x) \, dx = \int_{B_i} f(x) \,dx$ holds. Indeed, if this were not true, we could find $j \in \{1,\ldots,k\}$ such that $\int_{A_j}f(x) \, dx < \int_{B_j} f(x) \,dx$. Considering a point $x\in B_j$ such that \eqref{e:cpar} holds we have
\[\begin{split}
| x-x_j |^p +h_j \left(\int_{B_j} f(x) \,dx \right) &= C(x,(B_i)_i) \le C(x,(A_i)_i)\\
& =\min_{i=1,\ldots,k} \left\{ | x-x_i |^p +h_i\left(\int_{A_i} f(x) \, dx \right)\right\}\\
&\leq | x-x_j |^p +h_j \left(\int_{A_j} f(x) \, dx \right) \,.
\end{split}\]
But this leads to a contradiction: indeed, $h_j$ being strictly increasing, we should have 
$h_j \left(\int_{A_j} f(x) \, dx \right) <  h_j \left( \int_{B_j}  f(x) \, dx \right)$. This shows as claimed that
$\int_{A_i}f(x) \, dx = \int_{B_i} f(x) \,dx$ for every $i=1,\ldots,k$.

Since $(A_i)_{i=1,\ldots,k}$ is an equilibrium, using Proposition \ref{LTE} we obtain that the map $T=\sum_{i=1}^k x_i\indic_{A_i}$ is an optimal transport map from $\mu$ to $\sum_{i=1}^k \left(\int_{A_i} f(x)\, dx\right)\delta_{x_i}$. Using this fact and applying the assumption \eqref{e:cpar} we deduce
\[\begin{split}
\sum_{i=1}^k \int_{B_i} | x-x_i |^p f(x) \, dx
&\leq \sum_{i=1}^k \int_{A_i} | x-x_i |^p f(x) \, dx \\
& = W_p \bigg(\mu,\sum_{i=1}^k \bigg( \int_{A_i} f(x) \, dx \bigg) \delta_{x_i} \bigg) = 
W_p \bigg(\mu,\sum_{i=1}^k \bigg(\int_{B_i}f(x) \, dx\bigg) \delta_{x_i} \bigg) \,.
\end{split}\]
Hence the map  $S=\sum_{i=1}^k x_i \indic_{B_i}$ is an optimal transport map from $\mu$ to $\sum_{i=1}^k \left(\int_{B_i} f(x)\, dx\right)\delta_{x_i} = \sum_{i=1}^k \left(\int_{A_i} f(x)\, dx\right)\delta_{x_i}$. By the uniqueness results of Theorem \ref{exist} and Corollary \ref{c:uniqdir} we conclude.
\end{proof}

\begin{corollary}
If the maps $h_i$ are strictly increasing, then every equilibrium is a Pareto optimum.
\end{corollary}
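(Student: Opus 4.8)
The plan is to obtain the statement as an immediate consequence of Proposition~\ref{Pareto}, by a short argument by contradiction. First I would fix an equilibrium $(A_i)_{i=1,\ldots,k}$ and suppose, towards a contradiction, that it is \emph{not} a Pareto optimum. By the definition of Pareto optimum recalled just above the statement of Proposition~\ref{Pareto}, this means that there exists a partition $(B_i)_{i=1,\ldots,k}$ of $\Omega$ with $C(x,(B_i)_i) \le C(x,(A_i)_i)$ for $f$-a.e.\ $x\in\Omega$ and with strict inequality on a set of strictly positive measure.

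The key step is then to apply Proposition~\ref{Pareto}: since the functions $h_i$ are assumed strictly increasing and $(A_i)_{i=1,\ldots,k}$ is an equilibrium, the $f$-a.e.\ inequality $C(x,(B_i)_i)\le C(x,(A_i)_i)$ forces the two partitions $(A_i)_{i=1,\ldots,k}$ and $(B_i)_{i=1,\ldots,k}$ to coincide (up to $f$-negligible sets). Consequently $\int_{B_i}f=\int_{A_i}f$ for each $i$ and $\indic_{B_i}=\indic_{A_i}$ off an $f$-negligible set, so by the very definition \eqref{e:C} of $C$ we get $C(x,(B_i)_i)=C(x,(A_i)_i)$ for $f$-a.e.\ $x$. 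This contradicts the requirement that the inequality be strict on a set of positive measure; hence no such $(B_i)_{i=1,\ldots,k}$ exists, i.e.\ $(A_i)_{i=1,\ldots,k}$ is a Pareto optimum.

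I do not anticipate any genuine obstacle: all the substance is already contained in Proposition~\ref{Pareto}, and the corollary merely rephrases it in the terminology of Pareto optimality. The only mildly delicate point is the bookkeeping with ``up to $f$-negligible sets'', namely the observation that coinciding partitions yield $f$-a.e.\ equal cost functions; as noted this follows directly from \eqref{e:C}, since modifying the $B_i$ on $f$-negligible sets changes neither the queue contributions $h_i\big(\int_{B_i}f\big)$ nor the indicators $\indic_{B_i}$ up to an $f$-negligible set.
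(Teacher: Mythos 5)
Your argument is correct and is exactly the intended one: the paper gives no separate proof of the corollary because it follows immediately from Proposition~\ref{Pareto} by the contradiction you describe. The bookkeeping point about $f$-negligible sets is handled properly, so nothing is missing.
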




\section{Evolution dynamic  and convergence to the equilibrium}\label{s:dyna}

In this section we discuss a dynamical formulation of the problem, in which every single day each citizen decides where to go, using the knowledge of what happened in the previous days and trying to make a ``smart'' choice. We set the problem and discuss under which assumptions the situation converges to the (unique) individual equilibrium, giving also counterexamples to show that this is not always the case. In the first subsection we consider the standard evolution, where every day one thinks only to the previous day. In the second subsection we study the evolution with prudence, where the citizens again remember only the previous day, but are more careful and try to avoid changing idea too often. In the last subsection we describe the case of the evolution with memory, where each day the citizens remember more previous days. In all this section, to keep the discussion as simple as possible, we consider the case $k=2$ when there are only two services located at $x_1$ and $x_2$; however, the general case requires more care with multiple indeces but no really new ideas.\par
Recall that, thanks to Proposition \ref{exuninop}, we know the existence of a unique individual equilibrium under the assumption that the functions $h_1$ and $h_2$ are continuous and non-decreasing.

\subsection{Standard evolution\label{secstand}}

We face now the question whether or not the situation can naturally evolve toward this equilibrium. By ``naturally'' we mean that we try to model the evolution in time, in which the citizens do not know the situation in its complete complexity, but they just see how long is the queue and decide day by day how to behave. In other words, we introduce a simple scheme which models the everyday choice of people, and we study the possible convergence under this model. The idea is very easy: at each day, each citizen decides freely whether to go to $x_1$ or to $x_2$. Moreover, at the end of every day he discovers what has happened in that day, that is, he discovers $h_1(c_1)$ and $h_2(c_2)$: this will clearly affect the choice of the following day. More precisely, for each $j\in\N$ we have the function $\psi_j:\Omega\to \{0,1\}$, where $\psi_j(x)=0$ (resp. $\psi_j(x)=1$) means that at the day $j$ the citizen living at $x$ goes to $x_1$ (resp. to $x_2$); note that, in the language of \eqref{defpsi}, the function $\psi_j$ corresponds to $\psi^2$ at time-step $j$, while $\psi^1$ is simply $1-\psi^2$. We also set $m_j\in [0,1]$ the mass of the people that, at the day $j$, decide to go to $x_1$, i.e.
\begin{equation}\label{defmj}
m_j:= \int_\Omega \big(1-\psi_j(x)\big)f(x)\,dx\,.
\end{equation}
The evolution of the problem is the following: we fix any function $\psi_0 : \Omega \to \{0,1\}$, and consider it as a data of the problem; that is, we leave the citizens free to decide by chance what to do at the day $0$. Of course, the most meaningful choice would be
\begin{equation}\label{meaningfulchoice}
\psi_0(x)=\left\{\begin{array}{ll}
0 &\hbox{if $\tau(x)<0$}\\
1 &\hbox{otherwise,}
\end{array}\right.
\end{equation}
recalling the definitions of $\tau$ and $m$ given previously
\begin{align}\tag{\ref{e:taum}}
\tau(x) = |x-x_1|^p - |x-x_2|^p\,,  && m(t) = \int_{\{x \,:\, \tau(x) < t\}} f(x)\, dx\,.
\end{align}
Indeed, this choice of $\psi_0$ means that in the first day each one simply goes to the closest of the two service places, which makes sense since he does not have any information about queues. However, our results also hold with any other function $\psi_0$. Given now any $j\in\N$, and given the function $\psi_j$, the function $\psi_{j+1}$ is determined as follows:
\begin{equation}\label{firstevolution}
\psi_{j+1}(x)=\left\{\begin{array}{ll}
0 &\hbox{if $\tau(x)<h_2(1-m_j)-h_1(m_j)$}\\
1 &\hbox{otherwise.}
\end{array}\right.
\end{equation}
The meaning of this model is extremely clear: at the day $j+1$, each citizen decides where to go assuming that the quantity of people going to $x_1$ and $x_2$ will be the same as in the day $j$. The first property that we can easily notice is the following: there exists a sequence $( t_j )_{j\geq 1}$ such that for any $j$ one has
\[
\psi_j(x)=\left\{\begin{array}{ll}
0 &\hbox{if $\tau(x)<t_j$}\\
1 &\hbox{otherwise;}
\end{array}\right.
\]
this is immediate by the definition for any $j\geq 1$. Being $\psi_0$ free, we can not say the same for $j=0$, of course: however, in the case of the ``meaningful'' choice~(\ref{meaningfulchoice}), the above formula holds also for $j=0$ with $t_0=0$. Notice that
\begin{equation}\label{evoluz}
t_{j+1}=h_2(1-m_j)-h_1(m_j);
\end{equation}
as a consequence, we understand that everything in the choices of the citizen living at $x$ depends only on the value of $\tau(x)$. This also implies, for $j\geq 1$, that
\[
m_j=m(t_j)\,.
\]
What we want to know is if the situation converges to the individual optimum; that is, if $\psi_j$ converges to the function $\bar\psi$ defined as
\[
\bar \psi (x) = \left\{
\begin{array}{ll}
0 & \text{ if $x \in A_1$} \\
1 & \text{ if $x \in A_2$,}
\end{array} \right.
\]
where $(A_1,A_2)$ is the equilibrium partition given by Proposition~\ref{exuninop}. Equivalently, we want to understand if $t_j$ converges to $\bar t$ where, according to Proposition~\ref{exuninop}, $\bar t$ is the unique real number such that
\[
\bar t = h_2\big( 1- m(\bar t)\big) - h_1\big(m(\bar t)\big)\,.
\]
A first answer to this question is given below. We shall consider the function $G:\R\to\R$ defined by
\begin{equation}\label{defG}
G(t):= h_2\big(1-m(t)\big)-h_1\big(m(t)\big)\,.
\end{equation}
Notice that the function $G$ has a very simple meaning, namely, it is $t_{j+1}=G(t_j)$; moreover $\bar t$ is uniquely determined by the fact that $G(\bar t)=\bar t$. We then prove the following result which clarifies what happens for the standard evolution.
\begin{proposition}\label{g'<1}
Assume that $h_1$ and $h_2$ are continuous and non-decreasing, as well as that
\begin{equation}\label{condder<1}
| G(s) - G(t) | < |s-t|
\end{equation}
for all $s\neq t\in \R$. Then $t_j\to \bar t$ for $j\to +\infty$, hence also $\psi_j\to \bar\psi$ uniformly on the compact subsets of $\Omega\setminus \{\tau=\bar t\}$.
\end{proposition}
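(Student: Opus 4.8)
The plan is to read \eqref{condder<1} as the statement that $G$ is a \emph{weak contraction} (it strictly decreases distances) whose unique fixed point is $\bar t$, and to run the classical compactness argument showing that the orbits of such a map converge. Note that Banach's fixed point theorem does not apply directly here, since we are only given $|G(s)-G(t)|<|s-t|$ and not a uniform Lipschitz constant strictly below $1$; this is the one place where some care is needed.

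First I would collect the elementary facts about $G$. As observed right after \eqref{defG}, the sequence $(t_j)_{j\geq1}$ satisfies the recursion $t_{j+1}=G(t_j)$, so it is an orbit of $G$. The map $G$ is continuous, since $G=\mathrm{id}-U$ where $U$ is the continuous map appearing in the proof of Proposition~\ref{exuninop}. Moreover $G$ is bounded, because $m(t)\in[0,1]$ forces $G(t)\in[-\max_{[0,1]}h_1,\ \max_{[0,1]}h_2]$ for every $t$; together with \eqref{evoluz} (and $m_j\in[0,1]$) this shows that $(t_j)_{j\geq1}$ stays in a fixed compact interval. Finally, by Proposition~\ref{exuninop}, $\bar t$ is the unique fixed point of $G$.

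Now for the convergence $t_j\to\bar t$. If $t_j=\bar t$ for some $j\geq1$, then the orbit is constant from that index on and we are done, so assume $t_j\neq\bar t$ for all $j\geq1$. By \eqref{condder<1}, $|t_{j+1}-\bar t|=|G(t_j)-G(\bar t)|<|t_j-\bar t|$, so the sequence $(|t_j-\bar t|)_{j\geq1}$ is strictly decreasing and hence converges to some $\ell\geq0$. Using the boundedness of $(t_j)$ I would extract a subsequence $t_{j_k}\to\gamma$; then $|\gamma-\bar t|=\ell$, and by continuity $t_{j_k+1}=G(t_{j_k})\to G(\gamma)$, whence $|G(\gamma)-\bar t|=\lim_k|t_{j_k+1}-\bar t|=\ell$. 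If $\ell>0$ then $\gamma\neq\bar t$, and \eqref{condder<1} applied to $\gamma$ and $\bar t$ gives $|G(\gamma)-\bar t|<|\gamma-\bar t|=\ell$, a contradiction; hence $\ell=0$ and $t_j\to\bar t$. I expect this step — ruling out a nonzero limit of the (automatically decreasing) distances by a compactness-plus-continuity argument — to be the only genuine point of the proof. (Alternatively, since $G$ is non-increasing, the even- and odd-indexed subsequences of $(t_j)_{j\geq1}$ are orbits of the non-decreasing map $G\circ G$, hence monotone and convergent, and comparing their limits via \eqref{condder<1} gives the same conclusion; the argument above is shorter.)

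It remains to pass from $t_j\to\bar t$ to $\psi_j\to\bar\psi$. Fix a compact set $K\subset\Omega\setminus\{\tau=\bar t\}$. Since $\tau$ is continuous on $\Omega$ and never equals $\bar t$ on $K$, we have $\delta:=\min_{x\in K}|\tau(x)-\bar t|>0$. For all $j$ large enough $|t_j-\bar t|<\delta$, and then for every $x\in K$ the comparison of \eqref{formastandard} with the formula defining $\psi_j$ gives $\psi_j(x)=\bar\psi(x)$: if $\tau(x)\leq\bar t-\delta$ then $\tau(x)<t_j$ and $x\in A_1$, so both sides equal $0$, while if $\tau(x)\geq\bar t+\delta$ then $\tau(x)>t_j$ and $x\in A_2$, so both sides equal $1$. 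Hence $\psi_j\equiv\bar\psi$ on $K$ for all large $j$, which is even stronger than the uniform convergence claimed.
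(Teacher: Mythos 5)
Your proof is correct, but the core step -- showing $t_j\to\bar t$ -- is argued differently from the paper. The paper exploits the fact that $G$ is non-increasing (since $h_1,h_2$ are non-decreasing): starting from, say, $t_1\le\bar t$, the iterates alternate around $\bar t$, the even and odd subsequences are monotone and converge to limits $t^+\ge\bar t\ge t^-$ with $G(t^+)=t^-$ and $G(t^-)=t^+$, and then \eqref{condder<1} forces $t^+=t^-=\bar t$. You instead run the Edelstein-type compactness argument: $|t_{j+1}-\bar t|=|G(t_j)-G(\bar t)|\le|t_j-\bar t|$ decreases to some $\ell\ge 0$, a convergent subsequence $t_{j_k}\to\gamma$ with $|\gamma-\bar t|=\ell$ exists because the orbit is trapped in the compact interval $[-\max h_1,\max h_2]$, and continuity of $G$ together with \eqref{condder<1} rules out $\ell>0$. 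Both routes are sound; yours is slightly more general in that it never uses the monotonicity of $G$ for the convergence (only for citing Proposition~\ref{exuninop} for the fixed point, and even there uniqueness already follows from \eqref{condder<1}), whereas the paper's argument gives the extra structural information that the iterates approach $\bar t$ alternately from the two sides, which is what Figure~\ref{chloegianluca} illustrates. You correctly noted in passing that the paper's monotone-subsequence route is available as an alternative. The deduction of $\psi_j\to\bar\psi$ on compact subsets of $\Omega\setminus\{\tau=\bar t\}$, including the observation that one actually gets $\psi_j\equiv\bar\psi$ on $K$ for large $j$, matches the paper (and its subsequent remark) exactly.
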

\begin{proof}
The evolution of the sequence $j\mapsto t_j$ is shown in the Figure~\ref{chloegianluca}: notice that since $h_1$ and $h_2$ are non-decreasing then the function $G$ is non-increasing, hence the equilibrium point $\bar t=G(\bar t)$ is unique. 
\begin{figure}[htbp]
\begin{center}
\input{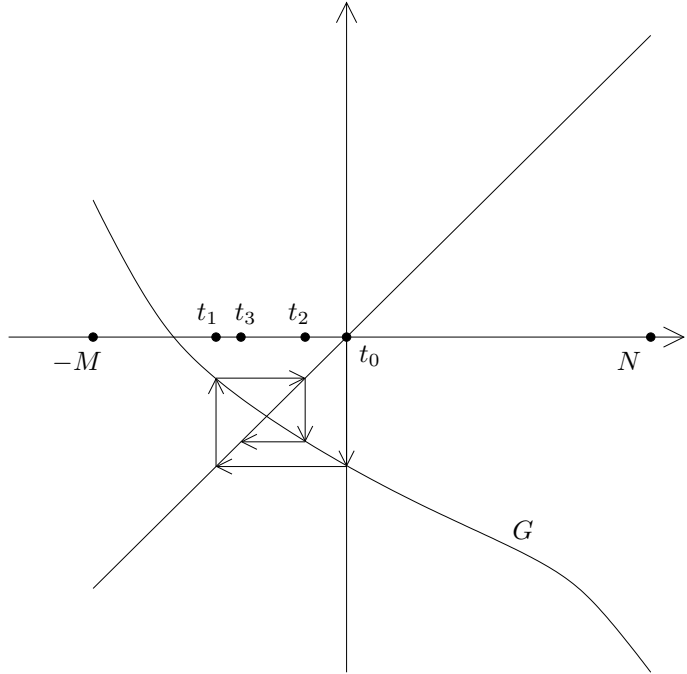}\caption{The situation in Proposition~\ref{g'<1}}\label{chloegianluca}
\end{center}
\end{figure}
Let us assume by symmetry that, as in the figure, $t_1\leq \bar t$: then, since $G$ is a non-increasing function and $G(\bar t)=\bar t$, we have immediately that $t_n\geq\bar t$ for all even $n$ and $t_n\leq \bar t$ for all odd $n$. Therefore, $t_{2n}\searrow t^+$ and $t_{2n+1}\nearrow t^-$ with $t^-\leq \bar t\leq t^+$. Moreover, by continuity of $G$ it is clear that $G(t^+)=t^-$ and $G(t^-)=t^+$. By~(\ref{condder<1}) we deduce that, if $t_-<t_+$, then
\[
t^+ - t^-= G(t^-)-G(t^+) < t^+-t^-\,,
\]
which gives an absurd: hence, $t^-=t^+$, so $t_j\to \bar t$ as we stated. Consider now any compact subset $K$ of $\Omega\setminus \{\tau=\bar t\}$: there is then a small interval $I\ni \bar t$ so that $\tau(x)\notin I$ for all $x\in K$. There exists $\bar j$, depending on $I$ thus on $K$, such that $t_j\in I$ for all $j\geq \bar j$; hence, for any $x\in K$ and for any $j\geq \bar j$, one has $\psi_j(x)=\bar \psi(x)$.
\end{proof}

\begin{rem}
{\rm As the proof underlines, we have more than the uniform convergence of $\psi_j$ to $\bar \psi$ on the compact subsets of $\Omega\setminus\{\tau=\bar t\}$: indeed, in such compact subsets one has that $\psi_j\equiv \bar\psi$ for $j\gg 1$.}
\end{rem}

\begin{example}[Non-convergence]\label{nonconv} {\rm We show now that the \emph{strict} $1$-Lipschitzianity of $G$ required in Proposition~\ref{g'<1} is necessary. Indeed, let $f$ and $h_1$, $h_2$ be such that $G(t)=1-t$ for $t\in (-2,2)$: this is of course possible with a smooth choice of $f$ and of $h_1$ and $h_2$. Moreover, let $\psi_0$ be given by~(\ref{meaningfulchoice}): in this case it is clear that $t_j=0$ for all even $j$ and $t_j=1$ for all odd $j$, so there is not convergence to $\bar t=1/2$. Notice that in this case $G$ is a Lipschitz function with Lipschitz constant equal to $1$.\par
We can easily understand a practical meaning of the possible non-convergence that we have described: if a small town has two supermarkets, there is not a big difference for the people to drive to one or to the other one. However, it may happen that these supermarkets can have very long queues. Therefore, if in a certain day everybody decides to go to the first one, all the people will be unsatisfied with their choice, noticing that the other one has no queue. Consequently, in the next day everybody will go to the second supermarket, leading to a long queue in this one and no queue in the first one. Clearly this procedure will be repeated all the following days, and so there will be no convergence.}
\end{example}

\begin{rem}
{\rm Let us underline what is the meaning of the request of $1$-Lipschitzianity of $G$; if everything is smooth, we can evaluate
\[
G'(t)= - \Big(h_2'(1-m(t)) - h_1'(m(t)) \Big) m'(t)\,;
\]
hence, requiring that $|G'|$ is small means that we want $h_i'$ and/or $m'$ small: the first fact means that the queue does not change dramatically for a small change in the quantity of people; and the second one means that $f$ is not too big, so that a small region of the city (the region where the people can have the doubt whether going to one or the other place) does not contain too many citizens.}
\end{rem}

\subsection{Evolution with prudence}\label{ss:prudence}

As Example~\ref{nonconv} shows, in the previous setting the convergence to the individual optimum does not always hold; however, this does not seem to be likely. In particular, in the case described in the second part of the example, it seems not so reasonable that people continue to change their idea every day in such a silly way. To model the ``smarter'' behaviour of the people, we can follow two ways. One way is to allow people to remember more than a single day, giving them a memory, and this is the content of Section~\ref{lastsubs}. The other way, which is what we investigate now, is to make people more ``prudent''.\par
To introduce the concept of prudence, we start by noticing that the easiness with which people change their habit is excessive: in other words, it is not very reasonable that for a generic $x\in\Omega$ the function $j\mapsto \psi_j(x)$ can change from $0$ to $1$ and vice-versa. The idea of ``prudence'' is that it can be more convenient to consider the choice of a citizen living at $x$ at time $j$ as a stochastic variable, hence using the approach with transport plans instead of transport maps; recalling \eqref{defpsi}, this means that the value of $\psi_j(x)$ can be not only $0$ or $1$, but any number between $0$ and $1$. This approach is also common in game theory, and is related to the notion of mixed strategy, for which we refer to Section 7.4 of \cite{Aubin}. So, $\psi_j(x)$ is the percentage of the citizens living at $x$ which decide to go to $x_2$ at the time $j$, while $1-\psi_j(x)$ is the percentage of those going to $x_1$. With this approach, one is not assuming that all the citizens living at the same place $x$ are forced to take always exactly the same decisions. The evolution rule will be then a simple modification of \eqref{firstevolution}. Recall the definition \eqref{defmj} of $m_j$ which corresponds to the mass of people going to $x_1$ at time $j$. Notice also that the definition \eqref{evoluz} of $t_{j+1}$ means that at time $j+1$, for a citizen living at $x$ it would seem at first glance more convenient to go to $x_1$ (resp. $x_2$) if $\tau(x)$ is smaller (resp. greater) than $t_{j+1}$. We introduce then a parameter $0<\rho<1$ which we call {\em prudence parameter}, expressing the ``resistance'' that the average citizen feels against changing his habit. The evolution rule is then given as follows:
\begin{equation}\label{evolutionprudence}
\psi_{j+1}(x)=\left\{\begin{array}{ll}
\rho \psi_j(x) &\hbox{if $\tau(x)<t_{j+1}$}\\
1-\rho\big(1-\psi_j(x)\big) &\hbox{otherwise.}
\end{array}\right.
\end{equation}
Note that the extreme case $\rho=0$, that is no prudence, corresponds to the case of the previous section; on the opposite, 
the extreme case $\rho=1$, that is full prudence, means that nobody will ever change his habit, and so $\psi_j=\psi_0$ for 
each $j$. We can then prove our result.
\begin{thm}\label{thmprudence}
Assume that $f$ is bounded and that $h_1$ and $h_2$ are non-decreasing and Lipschitz continuous. Then there exists $0<\bar\rho<1$ so that, if $\bar\rho<\rho<1$, $\psi_j$ converges to $\bar\psi$ uniformly on the compact subsets of $\Omega\setminus\{\tau=\bar t\}$.
\end{thm}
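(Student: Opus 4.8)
The plan is to reduce the functional evolution $j\mapsto\psi_j$ to a one-dimensional recursion for the masses $m_j$, to show that if $\rho$ is sufficiently close to $1$ this recursion is a contraction with fixed point the equilibrium value $\bar m:=m(\bar t)$, and finally to transfer $m_j\to\bar m$ into the local uniform convergence $\psi_j\to\bar\psi$. First I would plug the evolution rule \eqref{evolutionprudence} into the definition \eqref{defmj} of $m_{j+1}$: splitting $\Omega$ into $\{\tau<t_{j+1}\}$ and $\{\tau\ge t_{j+1}\}$, using $\int_\Omega\psi_j f=1-m_j$ and recalling that $m(t)=\int_{\{\tau<t\}}f$, a short computation yields
\[
m_{j+1}=(1-\rho)\,m(t_{j+1})+\rho\,m_j\,,\qquad t_{j+1}=h_2(1-m_j)-h_1(m_j)\,.
\]
Thus, setting $\Psi(c):=m\big(h_2(1-c)-h_1(c)\big)$ and $T_\rho(c):=(1-\rho)\,\Psi(c)+\rho\,c$, the whole dynamics becomes $m_{j+1}=T_\rho(m_j)$, with $m_j\in[0,1]$ for every $j$ (immediate from \eqref{evolutionprudence}); moreover $\Psi\big(m(t)\big)=m\big(G(t)\big)$ with $G$ as in \eqref{defG}, and $T_\rho$ and $\Psi$ share the same fixed point, namely $\bar m=m(\bar t)$ because $G(\bar t)=\bar t$.

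Next I would study $T_\rho$. The map $\Psi$ sends $[0,1]$ into itself and is non-increasing, being the composition of the non-decreasing map $m(\cdot)$ with the non-increasing map $c\mapsto h_2(1-c)-h_1(c)$; hence $c\mapsto\Psi(c)-c$ is strictly decreasing and $\bar m$ is its unique zero. The quantitative point is that $\Psi$ is Lipschitz: since $f$ is bounded, a standard estimate on the slabs $\{s\le\tau<t\}$ (via the coarea formula, using the regularity of the level sets of $\tau(x)=|x-x_1|^p-|x-x_2|^p$) gives that $t\mapsto m(t)$ is Lipschitz, and $h_1,h_2$ are Lipschitz by assumption; let $L$ be a Lipschitz constant for $\Psi$. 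For $c>c'$ one has $-L(c-c')\le\Psi(c)-\Psi(c')\le 0$, so that
\[
\big(\rho-(1-\rho)L\big)(c-c')\ \le\ T_\rho(c)-T_\rho(c')\ \le\ \rho\,(c-c')\,,
\]
whence $|T_\rho(c)-T_\rho(c')|\le q_\rho\,|c-c'|$ with $q_\rho:=\max\big\{\rho,\ |(1-\rho)L-\rho|\big\}$. An elementary check shows $q_\rho<1$ as soon as $\rho>\bar\rho:=\max\big\{1/2,\ (L-1)/(L+1)\big\}$, and then $T_\rho$ is a strict contraction of the complete metric space $[0,1]$; by the Banach fixed point theorem $m_j\to\bar m$ (geometrically), regardless of the initial datum $\psi_0$.

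It remains to go back to the functions $\psi_j$. By continuity, $t_{j+1}=h_2(1-m_j)-h_1(m_j)\to h_2(1-\bar m)-h_1(\bar m)=\bar t$. Fixing a compact set $K\subset\Omega\setminus\{\tau=\bar t\}$, continuity of $\tau$ gives $\delta:=\mathrm{dist}\big(\tau(K),\bar t\big)>0$, and there is $j_0$ with $|t_j-\bar t|<\delta$ for $j\ge j_0$. If $x\in K$ and $\tau(x)<\bar t-\delta$, then $\tau(x)<t_{j+1}$ for every $j\ge j_0$, so \eqref{evolutionprudence} forces $\psi_{j+1}(x)=\rho\,\psi_j(x)$ and hence $0\le\psi_j(x)\le\rho^{\,j-j_0}$ for $j\ge j_0$; symmetrically, if $\tau(x)>\bar t+\delta$, then $1-\psi_{j+1}(x)=\rho\,\big(1-\psi_j(x)\big)$ for $j\ge j_0$ and $0\le 1-\psi_j(x)\le\rho^{\,j-j_0}$. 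Since $\rho<1$ and these bounds do not depend on $x\in K$, we obtain $\psi_j\to\bar\psi$ uniformly on $K$, $\bar\psi$ being the function associated to the equilibrium partition given by Proposition~\ref{exuninop}.

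The hard part, in my expectation, is not the scheme above (the reduction and the transfer step are essentially bookkeeping) but the Lipschitz estimate for $t\mapsto m(t)$ — this is precisely where the boundedness of $f$ enters, together with a not completely trivial control on how fast the sublevel sets $\{\tau<t\}$ can grow — and the identification of the explicit threshold $\bar\rho$, which then comes out of the one-line estimate on the convex combination $T_\rho$ above. One should also be slightly careful that the reduction in the first step relies on $\psi_j\in[0,1]$ throughout, which is however guaranteed by \eqref{evolutionprudence}.
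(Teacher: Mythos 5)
Your proof is correct, but it follows a genuinely different route from the paper's. You collapse the whole dynamics to the closed scalar recursion $m_{j+1}=(1-\rho)\,m(t_{j+1})+\rho\,m_j$ (this identity is exact, and your verification of it is the crux of the reduction) and then apply the Banach fixed point theorem to $T_\rho=(1-\rho)\Psi+\rho\,\mathrm{id}$ on $[0,1]$. The paper instead tracks the two ``misplaced masses'' $S^1_j=\int_{\{\tau>\bar t\}}(1-\psi_j)\,d\mu$ and $S^2_j=\int_{\{\tau<\bar t\}}\psi_j\,d\mu$, proves that for $\rho$ large the sign of $S^1_j-S^2_j$ is preserved along the evolution, and deduces $S^1_j=\rho^{j-j_0}S^1_{j_0}\to 0$ together with $0\le S^2_j\le S^1_j$. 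Both arguments hinge on exactly the same quantitative ingredient — the Lipschitz continuity of $t\mapsto m(t)$ (coming from the boundedness of $f$ and the geometry of the level sets of $\tau$, which the paper also only sketches) composed with the Lipschitz continuity of $h_1,h_2$; your constant $L$ plays the role of the paper's $D=KL$. What your version buys: a one-variable contraction with an explicit and slightly better threshold ($\bar\rho=(L-1)/(L+1)$ versus the paper's $D/(D+1)$), geometric convergence of $m_j$ with no case distinction on which service is over-crowded, and an argument that transfers verbatim to the increasing-prudence setting of Theorem~\ref{pruddiver}, since $q_{\rho_j}\le\rho_j$ for $j$ large and $\prod_j\rho_j=0$ under the hypothesis $\sum_j(1-\rho_j)=+\infty$. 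What the paper's bookkeeping buys is a more directly interpretable statement (the excess population at the ``wrong'' service decays monotonically in sign), but nothing is lost in your approach; the final transfer from $t_j\to\bar t$ to local uniform convergence of $\psi_j$ is the same in both proofs.
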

\begin{proof}
First of all, we introduce the quantities
\begin{equation}\label{defs_12}
S^2_j = \int_{\{\tau(x)< \bar t\}} \psi_j(x)\, d\mu(x)
\qquad \text{and} \qquad
S^1_j = \int_{\{\tau(x)> \bar t\}} \big(1-\psi_j(x)\big)\, d\mu(x)\,.
\end{equation}
Observe that, at time $j$, $S^2_j$ is the amount of people which are going to $x_2$, but which should go to $x_1$ according to the equilibrium $\bar\psi$; analogously, $S^1_j$ are the people going to $x_1$ and which should go to $x_2$. Clearly, if we call $\bar m=m(\bar t)$, it is $m_j=\bar m+S^1_j-S^2_j$, where $m_j$ is defined in \eqref{defmj}. Hence, comparing~(\ref{optind}) and~(\ref{evoluz}) and recalling that $h_1$ and $h_2$ are Lipschitz, we deduce
\begin{equation}\label{stimatS}
\big|t_{j+1}-\bar t\big| \leq K \big|S^1_j-S^2_j\big|
\end{equation}
with a constant $K$ depending only on $h_1$ and $h_2$. Moreover, since $h_1$ and $h_2$ are non-decreasing and since $G(\bar t)=\bar t$ by definition of equilibrium, we deduce that $m_j \leq \bar m$ implies $t_{j+1} \geq \bar t$ and symmetrically $m_j \geq \bar m$ implies $t_{j+1} \leq \bar t$. This means that if at time $j$ ``few'' people are going to $x_1$, at time $j+1$ ``many'' people will assume to be convenient to do so: however, thanks to the prudence, only a part of those people will indeed change their goal point.\par
Let us now take $j\in \N$ and let us assume by symmetry that $S^1_j\geq S^2_j$, hence $m_j\geq m(\bar t)$, $t_{j+1}\leq \bar t$ and $m(t_{j+1})\leq m(\bar t)$. Keep in mind that, due to the prudence, it is not true in general that $m_j=m(t_j)$. By construction, we know
\begin{equation}\label{S1j+1}
S^1_{j+1} = \rho S^1_j\,.
\end{equation}
On the other hand, concerning $S^2_{j+1}$, by definition~(\ref{defs_12}) one has
\begin{equation}\label{S2j+1}
\begin{split}
S^2_{j+1}&=\int_{\{\tau< \bar t\}} \psi_{j+1}(x)\, d\mu(x)
=\int_{\{\tau< t_{j+1}\}} \psi_{j+1}(x)\, d\mu(x)+\int_{\{t_{j+1}<\tau< \bar t\}} \psi_{j+1}(x)\, d\mu(x)\\
&=\int_{\{\tau< t_{j+1}\}} \rho\psi_j(x)\, d\mu(x)+\int_{\{t_{j+1}<\tau< \bar t\}} \left( 1-\rho+\rho\psi_j(x) \right)\, d\mu(x)\\
&=\rho S^2_j + \big(1-\rho\big)\big(m(\bar t)-m(t_{j+1})\big)
\leq\rho S^2_j + (1-\rho) D \big(S^1_j-S^2_j\big)\,.
\end{split}
\end{equation}
Here $D=KL$, where $K$ is given by~(\ref{stimatS}) and $L$ is the Lipschitz constant of $m$: indeed, by immediate geometric arguments one understands that $t\mapsto \L^d \big( \big\{x\in\Omega:\, \tau(x)\leq t\big\} \big)$ is a Lipschitz map, so that $m(t)$, which is the integral over the sets $\big\{x\in\Omega:\, \tau(x)\leq t\big\}$ of the bounded function $f$, is also Lipschitz.\par
Now, using~(\ref{S1j+1}) and~(\ref{S2j+1}) we have
\[
S^1_{j+1}-S^2_{j+1} \geq \big(\rho-(1-\rho) D\big) \big(S^1_j-S^2_j\big)\,.
\]
We can finally deduce what follows: if the prudence is close enough to $1$, namely $\rho>1-\frac{1}{D+1}$, the sign of $S^1_m-S^2_m$ remains positive for all $m\geq j$. In other words, if the population is prudent enough and at a certain moment too many people are going to the service $x_1$, then in the sequel the same service will always have more people than in the equilibrium, though the excess becomes smaller and smaller.\par
We can now observe that $S^1_m$ and $S^2_m$ go to $0$ for $m\to \infty$. Indeed, by~(\ref{S1j+1}) we have that $S^1_m=\rho^{m-j} S^1_j$, so $S^1_m\to 0$; moreover, being $0\leq S^2_m\leq S^1_m$, it is clearly also $S^2_m\to 0$.\par
Using~(\ref{stimatS}) we deduce that $t_m\to \bar t$, and finally by~(\ref{evolutionprudence}) this implies that $\psi_m$ converges to $\bar\psi$ uniformly on the compact subsets of $\Omega\setminus\{\tau=\bar t\}$.
\end{proof}

\begin{example}[Non-convergence]\label{nonconv2}
{\rm We show that the claim of Theorem~\ref{thmprudence} is sharp, in the sense that a small prudence could be not enough to ensure convergence. As in the second part of Example~\ref{nonconv}, assume to be in a small city with two supermarkets having long queues, and take the prudence $\rho=1/3$. Assume also that $\psi_0(x)=1/4$ for $f-$a.e. $x$: then, $25\%$ of people are going to the first supermarket and $75\%$ to the second one. Since the city is small, the queue convinces \emph{all the citizens} that the first supermarket could be better; hence it will be $\psi_1(x)=3/4$ for $f-$a.e. $x$, so that at the day $1$ one has $75\%$ of people going to the first supermarket and $25\%$ to the second one. Clearly, this procedure will be periodic and there will be no convergence to the equilibrium.}
\end{example}

We conclude by noticing that this model with a ``fixed prudence'' may seem not so convincing, since it appears more likely that the prudence is increasing with the time: this increment can be also thought as a sign of ``laziness'', or of habit. We describe now how a model with an increasing prudence (that is, $\rho=\rho_j$ in the definition~\eqref{evolutionprudence}) gives rise to convergence to the equilibrium.

\begin{thm}[Increasing prudence]\label{pruddiver}
Assume that $f$ is bounded and that $h_1$ and $h_2$ are non-decreasing and Lipschitz continuous. Consider a sequence $(\rho_j)_{j \geq 1}$ such that
$$
\rho_j \to 1 \qquad \text{ and } \qquad \sum_{j=1}^\infty (1-\rho_j) = +\infty \,.
$$
Consider an evolution with increasing prudence, i.e.~consider \eqref{evolutionprudence} with $\rho = \rho_j$.
Then there is convergence of $\psi_j$ to $\bar\psi$ uniformly on the compact subsets of $\Omega\setminus\{\tau=\bar t\}$.
\end{thm}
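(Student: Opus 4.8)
The plan is to adapt, almost verbatim, the proof of Theorem~\ref{thmprudence}, exploiting the fact that the quantitative estimates obtained there never used the constancy of the prudence parameter. I would again introduce the quantities $S^1_j$ and $S^2_j$ of~\eqref{defs_12}. The first step is to re-examine~\eqref{stimatS}, \eqref{S1j+1} and~\eqref{S2j+1}: their derivation relies only on the structure of the evolution rule~\eqref{evolutionprudence}, on the monotonicity and Lipschitz continuity of $h_1,h_2$, and on the boundedness of $f$, so they continue to hold with $\rho$ replaced by $\rho_j$ and with the very same constants $K$, $L$ and $D=KL$. In particular, whenever $S^1_j\geq S^2_j$ (so that $m_j\geq\bar m$, $t_{j+1}\leq\bar t$ and $m(t_{j+1})\leq\bar m$) one has $S^1_{j+1}=\rho_j S^1_j$ and $S^2_{j+1}\leq\rho_j S^2_j+(1-\rho_j)D\,(S^1_j-S^2_j)$, whence
\[
S^1_{j+1}-S^2_{j+1}\ \geq\ \bigl((1+D)\rho_j-D\bigr)\bigl(S^1_j-S^2_j\bigr)\,.
\]

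The one genuinely new feature is that the coefficient $(1+D)\rho_j-D$ is no longer a fixed positive number and may even be negative for small $j$. Since $\rho_j\to1$, I can fix $j_0$ such that $(1+D)\rho_j-D\geq0$ (and $0<\rho_j<1$) for every $j\geq j_0$: the first $j_0$ iterations just play the role of an irrelevant ``burn-in''. Up to exchanging $x_1$ and $x_2$ --- which swaps $S^1_j$ with $S^2_j$ --- I may assume $S^1_{j_0}\geq S^2_{j_0}$, and then the displayed inequality propagates inductively to $S^1_j\geq S^2_j\geq0$ for all $j\geq j_0$. From $S^1_{j+1}=\rho_j S^1_j$ one gets $S^1_j=S^1_{j_0}\prod_{i=j_0}^{j-1}\rho_i$, and since $\log\rho_i\leq-(1-\rho_i)$ the hypothesis $\sum_j(1-\rho_j)=+\infty$ forces $\prod_{i\geq j_0}\rho_i=0$; hence $S^1_j\to0$, and by $0\leq S^2_j\leq S^1_j$ also $S^2_j\to0$. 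Feeding this into~\eqref{stimatS} yields $|t_{j+1}-\bar t|\leq K\,\bigl|S^1_j-S^2_j\bigr|\to0$, that is $t_j\to\bar t$.

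It then remains to pass from $t_j\to\bar t$ to the uniform convergence $\psi_j\to\bar\psi$ on a compact set $K\subset\Omega\setminus\{\tau=\bar t\}$. Since $\tau(K)$ is compact and does not contain $\bar t$, there is $\delta>0$ such that $\tau<\bar t-\delta$ on $K^-:=K\cap\{\tau<\bar t\}$ and $\tau>\bar t+\delta$ on $K^+:=K\cap\{\tau>\bar t\}$, and there is $j_1$ with $|t_{j+1}-\bar t|<\delta$ for every $j\geq j_1$. Then, for $j\geq j_1$, the rule~\eqref{evolutionprudence} reduces on $K^-$ to $\psi_{j+1}=\rho_j\psi_j$, so $0\leq\psi_m(x)\leq\prod_{i=j_1}^{m-1}\rho_i$ for $x\in K^-$ and $m>j_1$; since $\bar\psi\equiv0$ on $K^-$ and the tail products of $(\rho_i)$ vanish, $\psi_m\to\bar\psi$ uniformly on $K^-$. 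Symmetrically, on $K^+$ one has $1-\psi_{j+1}=\rho_j(1-\psi_j)$, hence $1-\psi_m\to0$ and $\psi_m\to1=\bar\psi$ uniformly on $K^+$.

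I expect the only point requiring genuine care --- more than a real obstacle --- to be the handling of the initial segment $j<j_0$, on which the sign of $S^1_j-S^2_j$ cannot be controlled; the resolution is simply to discard it and restart the monotonicity/contraction argument at time $j_0$. Throughout, the geometric decay $\rho^{m-j}$ used in the constant-prudence proof is replaced by the elementary equivalence $\sum_j(1-\rho_j)=+\infty\iff\prod_j\rho_j=0$, and every other step is essentially a transcription of the proof of Theorem~\ref{thmprudence}.
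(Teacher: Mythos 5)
Your proposal is correct and follows essentially the same route as the paper's proof: discard a finite burn-in until $\rho_j$ exceeds the threshold $\bar\rho$ of Theorem~\ref{thmprudence}, propagate the sign of $S^1_j-S^2_j$ from there, and replace the geometric decay by the vanishing of $\prod_j\rho_j$ forced by $\sum_j(1-\rho_j)=+\infty$. The only difference is cosmetic: you spell out the final passage from $t_j\to\bar t$ to the uniform convergence of $\psi_j$ (which also needs the vanishing tail products), whereas the paper simply refers back to the proof of Theorem~\ref{thmprudence}.
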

\begin{proof}
Since $\rho_j\to 1$, there is $\bar j\in\N$ so that for all $j\geq \bar j$ one has $\rho_j \geq \bar \rho$, being $\bar\rho$ as in Theorem~\ref{thmprudence}. From the proof of that theorem, we know that, assuming by symmetry that $S^1_{\bar j} \geq S^2_{\bar j}$, for all $j\geq \bar j$ it is still $S^1_j \geq S^2_j$. Notice also that a trivial calculation ensures
\begin{equation}\label{prod->0}
\prod_{j = \bar j}^\infty \rho_j = \exp \bigg(\ln \Big(\prod_{j= \bar j}^\infty \rho_j \Big)\bigg)
=\exp \Big(\sum_{j=\bar j}^\infty \ln \rho_j \Big)
\leq \exp \Big(\sum_{j= \bar j}^\infty - (1-\rho_j) \Big)=0\,.
\end{equation}
Keeping in mind~(\ref{S1j+1}) and the fact that for all $j\geq \bar j$ it is $S^2_j\leq S^1_j$, we have that $S^1_j=S^1_{\bar j} \prod_{l={\bar j}}^{j-1} \rho_l\to 0$ and so also $S^2_j\to 0$; by~(\ref{stimatS}), which does not depend on $\rho_j$, we deduce that $t_j\to \bar t$. Finally, the required convergence of $\psi_j$ to $\bar\psi$ is shown exactly as in Theorem~\ref{thmprudence} recalling~(\ref{prod->0}).
\end{proof}

We now briefly discuss the assumptions of the above theorem: as already said, the meaning of $\rho_j\to 1$ is that the people become more and more prudent, or more and more lazy. The fact that $\sum_j (1-\rho_j)$ must diverge tells us that if the prudence goes to $1$ too fast, we can have convergence to a situation different from the equilibrium: for instance, if $\rho_j=1$ for all $j$, as already pointed out we have the full prudence, so that the people do never change their habit, and so $\psi_j=\psi_0$ for all $j$, whatever $\psi_0$ is. Finally, we have this last remark.

\begin{rem}\label{remsmart}
{\rm Notice that the constant $\bar\rho$ of Theorem~\ref{thmprudence} depends on the data of the problem. Since it is likely that the people only know how long the queues are everyday, but not the functions $f$, $h_1$ and $h_2$, a ``smart'' population does not have enough information to decide a fixed prudence $\rho$ which ensures the convergence to the equilibrium. However, thanks to Theorem~\ref{pruddiver}, the population can decide a strategy which surely leads to the convergence, that is an increasing prudence such as, for instance, $\rho_j=1-1/j$.}
\end{rem}

\subsection{Evolution with memory\label{lastsubs}}

We describe now the second possibility to overcome the unsatisfactory non-convergence behaviour of the simple evolution modeled in Section~\ref{secstand}. As anticipated, the idea is to allow people to have a memory, so that every day they can decide what to do keeping in mind more than just the queue of the day before. We go back to the deterministic model, in which the position where a citizen lives completely determines his behaviour. Recall that the function $G$ given by~(\ref{defG}) represents the difference between the queues at $x_2$ and at $x_1$ if a quantity $m(t)$ of people is going to $x_1$.\par
In the model of Section~\ref{secstand}, each citizen assumes that at the day $j+1$ the queue would have been the same as in the day $j$. Here, we give to the people a ``memory'': in other words, we take $\kappa\in \N$, $\kappa>1$, and in this model each citizen guesses that the queue will be approximatively the average of the queues of the preceding $\kappa$ days. Therefore, the citizen assumes that the difference between the queues at $x_2$ and at $x_1$ at the day $j+1$ will be
\[
Q=\frac{\sum_{m=j-\kappa+1}^{j} G(t_m)}{\kappa}\,.
\]
As a consequence, the citizen living at $x$ will decide to go to $x_1$ at the day $j+1$ if $\tau(x)<Q$. We take $\psi_1,\,\psi_2,\,\dots\,,\, \psi_\kappa$ as data of the problem, which means that for the first $\kappa$ days we let the people decide freely how to move (as in Section~\ref{secstand}, the choice of these $\psi_j$'s will not effect any of our results). For each $n>\kappa$, one has again
\[
\psi_n(x)=\left\{\begin{array}{ll}
0 &\hbox{if $\tau(x)<t_n$}\\
1 &\hbox{otherwise,}
\end{array}\right.
\]
but this time the evolution rule is simply
\begin{equation}\label{evolmemory}
t_{n+1}= \frac{\sum_{m=n-\kappa+1}^{n} G(t_m)}{\kappa}\,.
\end{equation}
We give now the first result of this section, which reduces to Proposition~\ref{g'<1} in the extreme case $\kappa=1$.

\begin{thm}\label{g'<L}
Assume that $h_1$ and $h_2$ are continuous and non-decreasing. Assume that the function $G$ given by~(\ref{defG}) is Lipschitz with constant $L<\kappa$. Then $t_n\to \bar t$ for $n\to +\infty$, hence also $\psi_n\to \bar\psi$ uniformly on the compact subsets of  $\Omega\setminus \{\tau=\bar t\}$.
\end{thm}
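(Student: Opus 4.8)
The plan is to reduce everything to the scalar sequence $(t_n)$, exactly as for $\kappa=1$ in Proposition~\ref{g'<1}. For $n>\kappa$ the partition at day $n$ is the threshold partition $\{\tau<t_n\}$, $\{\tau>t_n\}$, so it suffices to prove $t_n\to\bar t$; granted this, for any compact $K\subseteq\Omega\setminus\{\tau=\bar t\}$ one picks a small interval $I\ni\bar t$ with $\tau(K)\cap I=\emptyset$ (possible since $\tau$ is continuous), and then $t_n\in I$ for $n$ large forces $\psi_n\equiv\bar\psi$ on $K$, which is stronger than the claimed uniform convergence. Hence the whole content is the scalar convergence $t_n\to\bar t$.

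First I would record two elementary facts about~\eqref{evolmemory}. Writing $e_n=t_n-\bar t$ and using $G(\bar t)=\bar t$, one has $e_{n+1}=\frac1\kappa\sum_{m=n-\kappa+1}^{n}\bigl(G(t_m)-G(\bar t)\bigr)$; since $h_1,h_2$ are non-decreasing the function $G$ of~\eqref{defG} is non-increasing, so $G(t_m)-G(\bar t)$ has the opposite sign to $e_m$, while the Lipschitz hypothesis gives $|G(t_m)-G(\bar t)|\le L|e_m|$. Moreover, subtracting two consecutive instances of~\eqref{evolmemory} gives the telescoped identity $t_{n+1}-t_n=\frac1\kappa\bigl(G(t_n)-G(t_{n-\kappa})\bigr)$; in particular, since $G$ takes values in a bounded set ($m(t)\in[0,1]$ and the $h_i$ are continuous on $[0,1]$), the sequence $(t_n)$ is bounded, hence $\ell:=\limsup_n|e_n|$ is finite.

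The \textbf{crux}, and the reason the hypothesis is $L<\kappa$ rather than merely $L<1$, is that the naive bound $|e_{n+1}|\le\frac L\kappa\sum_{m=n-\kappa+1}^{n}|e_m|\le L\max_m|e_m|$ is \emph{not} a contraction once $L\ge1$: one must use that the sign reversal coming from the monotonicity of $G$ forces cancellation in the sum, a cancellation made effective by the averaging over $\kappa$ consecutive days. The model computation is the extremal linear recursion $e_{n+1}=-\frac L\kappa\bigl(e_n+e_{n-1}+\cdots+e_{n-\kappa+1}\bigr)$, with characteristic polynomial $P(z)=z^\kappa+\frac L\kappa(z^{\kappa-1}+\cdots+z+1)$, so that each root obeys $z^{\kappa+1}=(1-\tfrac L\kappa)z^\kappa+\tfrac L\kappa$. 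If a root had $|z|\ge1$, the triangle inequality would give $\phi(|z|)\le0$ for $\phi(r):=r^{\kappa+1}-(1-\tfrac L\kappa)r^\kappa-\tfrac L\kappa$; but $\phi(1)=0$, $\phi'(1)=1+L>0$ and $\phi$ is increasing on $\bigl[(\kappa-L)/(\kappa+1),\infty\bigr)$, an interval containing $1$, so $|z|=1$ with equality in the triangle inequality, which (as $0<L<\kappa$) forces $z^\kappa\in\R_{>0}$, hence $z^\kappa=1$ and then $z=1$ --- impossible because $P(1)=1+L\ne0$. Thus for $L<\kappa$ every root of $P$ lies in the open unit disk, while at $L=\kappa$ one has $(z-1)P(z)=z^{\kappa+1}-1$, so the roots are the nontrivial $(\kappa+1)$-th roots of unity, on the unit circle: the threshold $L<\kappa$ is sharp. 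From this I would extract a norm on $\R^\kappa$ and a constant $c<1$ making every companion matrix with first row $\bigl(-\tfrac{g_1}\kappa,\dots,-\tfrac{g_\kappa}\kappa\bigr)$, $g_i\in[0,L]$, a $c$-contraction, and then apply it along the orbit to the state vectors $(e_n,e_{n-1},\dots,e_{n-\kappa+1})$ --- which transform by exactly such matrices, the $g_i$ being the (nonnegative) secant slopes of $G$ --- to conclude $e_n\to0$. A more hands-on alternative is a direct $\limsup/\liminf$ analysis over blocks of $\kappa$ consecutive indices, tracking their maxima and minima and using the sign alternation of $G$ to contract them.

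The step I expect to be the \textbf{main obstacle} is precisely this passage from the scalar Lipschitz-plus-monotonicity estimates to a genuine contraction: for $\kappa\ge2$ no single application of~\eqref{evolmemory} is contracting, so one has to carry along an auxiliary functional --- a norm on the $\kappa$-dimensional state, or block maxima and minima --- that is quantitatively damped by the interplay between the oscillation forced by the monotonicity of $G$ and the $\kappa$-fold averaging. The spectral computation above pins down $L<\kappa$ as the exact threshold, but upgrading it to a uniform contraction for the variable-slope nonlinear recursion, as opposed to the constant-slope linear model, is the delicate part; everything else (the reduction to $(t_n)$, the boundedness, and the transfer back to $\psi_n$) is routine.
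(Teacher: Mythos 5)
Your reduction to the scalar statement $t_n\to\bar t$, the boundedness of $(t_n)$, and the transfer back to $\psi_n$ are all fine and match the paper. Your spectral computation also correctly identifies $L<\kappa$ as the sharp threshold for the \emph{constant-coefficient} linear model $e_{n+1}=-\frac{L}{\kappa}(e_n+\dots+e_{n-\kappa+1})$. But the step you yourself flag as the crux is a genuine gap, and it is not merely ``delicate'': the inference from ``each companion matrix with first row $(-g_1/\kappa,\dots,-g_\kappa/\kappa)$, $g_i\in[0,L]$, has spectral radius $<1$'' to ``there is one norm on $\R^\kappa$ in which all of them are $c$-contractions for a common $c<1$'' is false in general. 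A common contraction norm for a family of matrices is equivalent to the \emph{joint} spectral radius being $<1$, which is not controlled by the individual spectral radii (the standard example is the pair of nilpotent matrices $\bigl(\begin{smallmatrix}0&1\\0&0\end{smallmatrix}\bigr)$, $\bigl(\begin{smallmatrix}0&0\\1&0\end{smallmatrix}\bigr)$, each with spectral radius $0$ but with a product of spectral radius $1$). Moreover your root-location argument, which multiplies $P(z)$ by $(z-1)$ to telescope, uses that all coefficients equal $L/\kappa$; for unequal $g_i$ you only sketched the extremal case. So the heart of the theorem --- a uniform contraction for the time-varying, variable-slope recursion --- is asserted rather than proved, and the fallback ``$\limsup/\liminf$ over blocks'' is too vague to assess.

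For comparison, the paper closes exactly this gap by a combinatorial double induction (Lemma~\ref{lemmachloe}). It tracks the quantities $\sum_{i\in I}[G(t_i)-G(\bar t)]$ over \emph{all} subsets $I$ of a window of $\kappa$ consecutive days containing the most recent one, and proves two things: these signed partial sums stay bounded by $\alpha$ forever (part (A)), and after roughly $\kappa^2$ further steps the bound improves to $C_\kappa\alpha$ with $C_\kappa=1-(1-L/\kappa)^\kappa<1$ (part (B)). The mechanism is the one you correctly intuited --- monotonicity of $G$ gives a free one-sided bound (if $t_{\bar m}\le\bar t$ then $G(t_{\bar m})-G(\bar t)\ge 0$), and the Lipschitz bound applied to $t_{\bar m}-\bar t$, written as the average of $G(t_i)-G(\bar t)$ over the previous window split into $I\setminus\{\bar m\}$ and its complement, gives the other side --- but the cancellation is made quantitative only by carrying the whole lattice of subset sums as the induction variable, not a single norm or a block maximum. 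Iterating part (B) then gives geometric decay of $|G(t_n)-G(\bar t)|=|t_{n+1}-\bar t|$. If you want to salvage your route, you would need to either prove the joint spectral radius bound for the constrained family (consecutive matrices share $\kappa-1$ slope entries, a structure you are not exploiting), or carry out an explicit Lyapunov-type functional; as written, the proposal does not prove the theorem.
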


To prove this theorem, we will need the following definition.

\begin{definition}\label{defI}
Given $n\in \N$ and $j\in \{1,\,2,\, \dots\,,\,\kappa\}$, we say that the property $P_n^j(\alpha)$ holds true when, given any set $I\subseteq \{n,\, n-1,\, \dots n-\kappa+1\}$ with $\# I = j$ and $n\in I$, there holds
\begin{equation}\label{pmjalfa}
\bigg| \sum_{i\in I} \big[ G(t_i)-G(\bar t) \big] \bigg| \leq \alpha\,.
\end{equation}
\end{definition}

The fundamental feature of the properties $P_n^j(\alpha)$ is given by the lemma below.

\begin{lemma}\label{lemmachloe}
Assume that $G$ is $L-$Lipschitz with a constant $L\leq \kappa$. Then take $n\in\N$ and $\alpha>0$, and assume that $P_m^j(\alpha)$ holds true for any $j\in \{1,\,2,\, \dots\,,\,\kappa\}$ and for any $m\in \{n,\, n-1,\, \dots\,,\, n-\kappa+1\}$. We have then what follows.\\
{\bf (A)} For any $j\in \{1,\,2,\, \dots\,,\,\kappa\}$ and for any $m>n$, one has that $P_m^j(\alpha)$ is true.\\
{\bf (B)} For any $j\in \{1,\,2,\, \dots\,,\,\kappa\}$ and for any $m>n+(j-1)\kappa$ one has that $P_m^j(C_j\alpha)$ is true, where
\begin{equation}\label{defcj}
C_j = 1- \bigg(1-\frac{L}{\kappa}\bigg)^j\,.
\end{equation}
\end{lemma}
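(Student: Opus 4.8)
The plan is to reformulate everything in terms of the displacements $a_m := G(t_m) - G(\bar t)$. Since $G(\bar t) = \bar t$, the evolution rule \eqref{evolmemory} reads $t_{m+1} - \bar t = \frac1\kappa \sum_{l = m-\kappa+1}^{m} a_l$, and $P_m^j(\alpha)$ becomes the statement that $\big|\sum_{i \in I} a_i\big| \le \alpha$ for every $I$ with $m \in I$, $\# I = j$, $I \subseteq \{m, m-1, \dots, m-\kappa+1\}$. The one structural fact I would use is that $G$ is non-increasing: $m(\cdot)$ and $h_1, h_2$ are all non-decreasing, so $G(t) = h_2(1 - m(t)) - h_1(m(t))$ is non-increasing. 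Writing $\ell_m := \big(G(t_m) - G(\bar t)\big)/(t_m - \bar t)$ when $t_m \neq \bar t$ (and $\ell_m := 0$, with $a_m = 0$, otherwise), monotonicity and the $L$-Lipschitz bound give $-L \le \ell_m \le 0$, hence, using $L \le \kappa$,
\[
a_m = \frac{\ell_m}{\kappa}\sum_{l = m-\kappa}^{m-1} a_l \,, \qquad -1 \le \frac{\ell_m}{\kappa} \le 0 \,.
\]
The nonnegativity of $1 + \ell_m/\kappa$ is what makes the argument close.

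The computation behind both (A) and (B) is a single decomposition: fix $m$ and a test set $I = \{m\} \cup I'$ with $\# I = j$, so $I' \subseteq \{m-1, \dots, m-\kappa+1\}$ sits inside the block $J := \{m-\kappa, \dots, m-1\}$ of $\kappa$ elements, and $\# I' = j-1$, $\# (J\setminus I') = \kappa-j+1$. Substituting the identity for $a_m$ and splitting $J = I' \sqcup (J \setminus I')$,
\[
\sum_{i \in I} a_i = \Big(1 + \frac{\ell_m}{\kappa}\Big) \sum_{i \in I'} a_i + \frac{\ell_m}{\kappa} \sum_{l \in J \setminus I'} a_l \,.
\]
Since $\ell_m/\kappa \in [-1,0]$, this gives $\big|\sum_{i\in I} a_i\big| \le (1 + \ell_m/\kappa) P + (-\ell_m/\kappa) Q$ whenever $P$ bounds $\big|\sum_{I'} a_i\big|$ and $Q$ bounds $\big|\sum_{J\setminus I'} a_l\big|$. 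Everything then reduces to checking that $I'$ and $J\setminus I'$ — each anchored at its largest element, which one verifies lies in the right range — are legitimate test sets for properties $P^{j-1}_{\bullet}$ and $P^{\kappa-j+1}_{\bullet}$ already available, and to a short arithmetic.

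For part (A) I would induct on $m$, starting from the given block $\{n-\kappa+1, \dots, n\}$ on which $P^{\bullet}(\alpha)$ holds. To get $P^j_{n+1}(\alpha)$ apply the decomposition with $m = n+1$, for which $J = \{n-\kappa+1,\dots,n\}$: the largest elements of $I'$ and of $J\setminus I'$ both lie in this block, so $P = Q = \alpha$ and $\big|\sum_{i\in I} a_i\big| \le (1+\ell_m/\kappa)\alpha + (-\ell_m/\kappa)\alpha = \alpha$. Then the block of $\kappa$ consecutive indices carrying $P^{\bullet}(\alpha)$ shifts by one and the same step produces $P^{\bullet}_{n+2}(\alpha)$, and so on, proving (A) for all $m > n$.

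For part (B) I would induct on $j$. The case $j = 1$ is immediate: for $m > n$, the block $\{m-\kappa,\dots,m-1\}$ is anchored at $m-1 \ge n$, so $\big|\sum_{l=m-\kappa}^{m-1} a_l\big| \le \alpha$ by (A) and $|a_m| \le \frac{L}{\kappa}\alpha = C_1\alpha$. For the step, assume $P^{j-1}_m(C_{j-1}\alpha)$ for all $m > n + (j-2)\kappa$ and take $m > n + (j-1)\kappa$; in the decomposition one has $\max I' \ge m - \kappa + 1 > n + (j-2)\kappa$, so the inductive hypothesis yields $P = C_{j-1}\alpha$, while $\max(J\setminus I') \ge m - \kappa > n$, so (A) yields $Q = \alpha$. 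With $\beta := -\ell_m/\kappa \in [0, L/\kappa]$ and $C_{j-1} \le 1$,
\[
\Big|\sum_{i\in I} a_i\Big| \le (1-\beta) C_{j-1}\alpha + \beta \alpha = \big[C_{j-1} + \beta(1-C_{j-1})\big]\alpha \le \Big[C_{j-1} + \frac{L}{\kappa}(1-C_{j-1})\Big]\alpha \,,
\]
and inserting $C_{j-1} = 1 - (1-L/\kappa)^{j-1}$ the bracket collapses to $1 - (1-L/\kappa)^j = C_j$. This is $P^j_m(C_j\alpha)$, closing the induction. The main obstacle is purely one of bookkeeping: at each use of the decomposition one must ensure the anchors of the two sub-sums land in a range where the needed $P^{\bullet}(\cdot)$ is already known — which is exactly what forces the thresholds $m > n$ and $m > n + (j-1)\kappa$ — while never losing the sign of $1 + \ell_m/\kappa$, the only place monotonicity of $G$ and $L \le \kappa$ enter.
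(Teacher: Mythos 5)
Your argument is correct and is essentially the paper's proof in a different packaging: the exact identity $\sum_{i\in I} a_i = \big(1+\tfrac{\ell_m}{\kappa}\big)\sum_{i\in I'} a_i + \tfrac{\ell_m}{\kappa}\sum_{l\in J\setminus I'} a_l$, with $\tfrac{\ell_m}{\kappa}\in[-1,0]$, is a convex-combination restatement of the paper's two-sided estimates \eqref{primaineq}, \eqref{ultstr}, \eqref{stimagrezza} and \eqref{conclusion}, and your recursion $C_j=\tfrac{L}{\kappa}+\big(1-\tfrac{L}{\kappa}\big)C_{j-1}$ together with the bookkeeping of where the anchors of $I'$ and $J\setminus I'$ fall reproduces exactly the paper's induction. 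The only point worth flagging is that your inequality $\ell_m\le 0$ uses that $G$ is non-increasing, which is not among the lemma's literal hypotheses but is precisely what the paper's own proof also invokes (``$t_{\bar m}\le\bar t$, so that $G(t_{\bar m})\ge G(\bar t)$''), being guaranteed in context by the monotonicity of $h_1$, $h_2$ and $m$.
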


Before proving the lemma, let us notice how the result of Theorem~\ref{g'<L} easily follows.

\proofof{Theorem~\ref{g'<L}}
Let us set $n=\kappa$: there exists a positive $\alpha$ such that all the properties $P^j_m(\alpha)$ are valid for $j$ and $m$ in $\{1,\,2,\, \dots\,,\,\kappa\}$, since there are a finite number of inequalities of the type~(\ref{pmjalfa}) to be checked. By part~{\bf (A)} of Lemma~\ref{lemmachloe} we know then that all the properties $P^j_m(\alpha)$ hold true for any $m\in\N$ and $1\leq j\leq \kappa$. Moreover, since the constants $C_j$ are increasing in $j$ because $L<\kappa$, by part~{\bf (B)} we also have that the properties $P^j_m(C_\kappa \alpha)$ are valid for any $j$ and all $m>\kappa+ (\kappa-1) \kappa$, that is
\[
m \geq \kappa + \kappa(\kappa-1)+1\,.
\]
By an immediate iteration, we obtain the validity of the properties $P^j_m(C_\kappa^h\alpha)$ for all $j$ and all
\[
m\geq \kappa+ h\big(\kappa(\kappa-1)+1\big)\,.
\]
Since $C_\kappa<1$, the properties with $j=1$ are sufficient to deduce that $G(t_i)-G(\bar t)\to 0$ for $i\to \infty$, and since $G(t_i)=t_{i+1}$ while $G(\bar t)=\bar t$ this ensures that $t_i\to \bar t$ which is the first part of the thesis. The second part follows from the first one as already done in Proposition~\ref{g'<1}.
\end{proof}

Let us then prove the lemma.
\proofof{Lemma~\ref{lemmachloe}}
We start by proving {\bf (A)}. Let us then fix $\bar m>n$ and $\bar j$. We aim to show the validity of the property $P_{\bar m}^{\bar j}(\alpha)$. We can assume as an inductive hypothesis that $P_m^j(\alpha)$ has already been established 
\begin{itemize}
\item for all $n<m<\bar m$ and all $j$;
\item for $m=\bar m$ and all $j<\bar j$.
\end{itemize} 
Let us now fix a set $I$ according with Definition~\ref{defI}, and let us assume that $t_{\bar m}\leq \bar t$ (so that  $G(t_{\bar m})\geq G(\bar t)$), which is clearly admissible by symmetry. Keeping in mind that $G(\bar t)=\bar t$, and then
\begin{equation}\label{keepmind}
t_{\bar m} - \bar t = \frac{\sum_{\{\bar m-\kappa\leq i<\bar m\}}\ \left[ G(t_i)-G(\bar t) \right] }{\kappa}\,,
\end{equation}
we can first give the simple estimate
\begin{equation}\label{primaineq}
\sum_{i\in I} \left[ G(t_i) - G(\bar t) \right] = G(t_{\bar m})-G(\bar t) + \sum_{i\in I\setminus \{{\bar m}\}} \left[ G(t_i) - G(\bar t) \right]
\geq \sum_{i\in I\setminus \{{\bar m}\}} \left[ G(t_i) - G(\bar t) \right]  \geq -\alpha\,.
\end{equation}
Notice that the last inequality is emptily true for $\bar j=1$, even in the stronger form where $0$ replaces $-\alpha$, while for $\bar j>1$ it is a consequence of the property $P^{\bar j-1}_m(\alpha)$ with $m=\max \{i\in I\setminus  \{{\bar m}\}\}$, which is already known to hold by assumption.

We must now show also the opposite inequality, that is,
\begin{equation}\label{secondaineq}
\sum_{i\in I} \left[ G(t_i) - G(\bar t) \right] \leq \alpha\,.
\end{equation}
To this aim notice that, calling $J=\{\bar m-1,\, \bar m-2,\,\dots\,,\, \bar m -\kappa\}\setminus I$, one can evaluate
\begin{equation}\label{ultstr}\begin{split}
t_{\bar m}-\bar t &= \frac{\Big(\sum_{i\in I\setminus \{{\bar m}\}}\ \left[ G(t_i)-G(\bar t) \right] \Big)+\Big(\sum_{i\in J}\ \left[ G(t_i)-G(\bar t) \right] \Big)}{\kappa}\\
&\geq \frac{\Big(\sum_{i\in I\setminus \{{\bar m}\}}\ \left[ G(t_i)-G(\bar t) \right] \Big)-\alpha}{\kappa}\,,
\end{split}\end{equation}
using $P^j_{m'}(\alpha)$ with $m'=\max J<\bar m$ and $j=\# J$. Since in the same way one also has $\sum_{i\in I \setminus \{{\bar m}\}}\ \left[ G(t_i)-G(\bar t) \right] \leq \alpha$, the right term in last inequality is nonpositive: hence, by the $L-$Lipschitzianity of $G$ and the fact that $L\leq \kappa$ we obtain
\begin{equation}\label{stimagrezza}
G(t_{\bar m}) - G(\bar t) \leq L \, \frac{\alpha-\sum_{i\in I\setminus \{{\bar m}\}}\ \left[ G(t_i)-G(\bar t) \right] }{\kappa}
\leq \alpha-\sum_{i\in I\setminus \{{\bar m}\}}\ \left[ G(t_i)-G(\bar t) \right] \,,
\end{equation}
which gives~(\ref{secondaineq}). Hence, we have shown {\bf (A)} in its full generality.

\medskip

Let us now attack~{\bf (B)}, which will be done performing the same estimates as above, but more carefully. Our proof will be obtained as an induction over $\bar j$, so we must start with the case $\bar j=1$. In fact, this case is trivial: for any $\bar m>n$, recalling~(\ref{keepmind}) we have
\begin{equation}\label{nsc}
\big| t_{\bar m} - \bar t \big| = \Bigg |\frac{\sum_{\{\bar m-\kappa\leq i<\bar m\}}\ \left[ G(t_i)-G(\bar t) \right] }{\kappa} \Bigg|\leq \frac{\alpha}{\kappa}\,,
\end{equation}
and hence by the $L-$Lipschitzianity of $G$ one has
\[
\big| G(t_{\bar m})-G(\bar t)\big| \leq \frac{L}{\kappa}\, \alpha\,;
\]
this means that for any $\bar m>n$ one has the validity of $P^1_{\bar m}(C_1 \alpha)$ with $C_1=\frac{L}{\kappa}$, which coincides with~(\ref{defcj}).\par
We now take $1<\bar j\leq \kappa$, suppose by induction that {\bf (B)} has been proved for any $j<\bar j$, and try to obtain the thesis. We take then $\bar m>n+(\bar j-1)\kappa$, a set $I$ as in Definition~\ref{defI} and, as before, to fix the ideas we start assuming without loss of generality that $t_{\bar m}\leq \bar t$. Since, as a consequence, we have $G(t_{\bar m})\geq G(\bar t)$, it is immediate to give the lower bound. Indeed, exactly as in~(\ref{primaineq}), we can estimate
\begin{equation}\label{primaineq'}
\begin{split}
\sum_{i\in I} \left[ G(t_i) - G(\bar t) \right] &= G(t_{\bar m})-G(\bar t) + \sum_{i\in I\setminus \{{\bar m}\}} \left[ G(t_i) - G(\bar t) \right] \geq \sum_{i\in I\setminus \{{\bar m}\}} \left[ G(t_i) - G(\bar t) \right] \\
&\geq -C_{\bar j-1}\alpha
\geq -C_{\bar j} \alpha\,.
\end{split}
\end{equation}
To do so, we have used that $C_{\bar j-1}\leq C_{\bar j}$, which is clear by the definition~(\ref{defcj}), and we have also used the validity of $P^{\bar j-1}_{m'}(C_{\bar j-1}\alpha)$ where $m'=\max\{i\in I\setminus \{{\bar m}\}\}$. And in turn, $P^{\bar j-1}_{m'}(C_{\bar j-1}\alpha)$ is true by the inductive assumption and since
\[
m'>\bar m-\kappa >n+(\bar j-1)\kappa - \kappa = n + (\bar j-2)\kappa\,.
\]
To conclude the proof, we need then to show the upper bound, that is,
\begin{equation}\label{secondaineq'}
\sum_{i\in I} \left[ G(t_i) - G(\bar t) \right] \leq C_{\bar j}\alpha\,.
\end{equation}
We start exactly as we did to show~(\ref{secondaineq}) in the proof of {\bf (A)}; more precisely, we keep in mind the first inequality in~(\ref{stimagrezza}), which has already been proved above, that is
\[
G(t_{\bar m}) - G(\bar t) \leq L \, \frac{\alpha-\sum_{i\in I\setminus \{{\bar m}\}}\ \left[ G(t_i)-G(\bar t) \right] }{\kappa}\,.
\]
In order to use in a more careful way the $L$-Lipschitzianity of $G$, we rewrite the last inequality in the equivalent form
\[
G(t_{\bar m}) - G(\bar t) \leq \frac L \kappa \, \alpha - \frac L \kappa \, \sum_{i\in I\setminus \{{\bar m}\}}\ \left[ G(t_i)-G(\bar t)\right] \,,
\]
from which we obtain, using again some $P^{\bar j-1}_{m'}(C_{\bar j-1}\alpha)$ of which we already know the validity by inductive assumption, that
\begin{equation}\label{conclusion}\begin{split}
\sum_{i\in I} \left[ G(t_i) - G(\bar t) \right] &
\leq \frac L \kappa \, \alpha + \Big( 1- \frac L \kappa\Big) \, \sum_{i\in I\setminus \{{\bar m}\}}\ \left[ G(t_i)-G(\bar t) \right] \\
&\leq \frac L \kappa \, \alpha + \Big( 1- \frac L \kappa\Big) \, \Bigg| \sum_{i\in I\setminus \{{\bar m}\}}\ \Big[ G(t_i)-G(\bar t) \Big] \Bigg|
\leq \frac L\kappa \, \alpha + \Big( 1- \frac L \kappa\Big) \, C_{\bar j-1} \alpha\\
&= \bigg[ \frac L \kappa + \Big( 1- \frac L \kappa\Big) \bigg(1-\Big(1-\frac L \kappa\Big)^{\bar j -1}\bigg)\bigg] \alpha
= C_{\bar j} \alpha\,,
\end{split}\end{equation}
hence~(\ref{secondaineq'}) is established and we have completed the proof.
\end{proof}

As in the previous cases, we can give a counterexample to show that a small memory coefficient $\kappa$ could be not enough to guarantee convergence.

\begin{example}[Non-convergence]
{\rm This example is very similar to the ones in Examples~\ref{nonconv} and~\ref{nonconv2}: take a small city with the two supermarkets, and assume that the first supermarket can have a long queue, but the second one can have a much longer one. Take the memory coefficient $\kappa=2$, and assume that in the days $0$ and $1$ everybody goes to the first supermarket. This gives, for the days $j=0$ and $j=1$, an average queue which is high for the first supermarket and null for the second one: hence, at the day $j=2$ everybody will go to the second supermarket. The average queue for the days $j=1$ and $j=2$, then, is high for the first shop but very high for the second, thus everybody will go back to the first one at the day $j=3$. Again, the average queue for the days $j=2$ and $j=3$ will be high in the first supermarket and very high for the second one, so also at the day $j=4$ everybody will go to the first one. But then, at the day $5$ all the people will move to the second one because the average queue for the days $j=3$ and $j=4$ is all at the first one. This procedure is clearly periodic with a period of $3$ days and so the situation does not converge to the equilibrium. It is obvious how to modify the example to show that any memory coefficient $\kappa$ can be not enough for a suitable choice of $\Omega$, $f$, $h_1$ and $h_2$.}
\end{example}

Let us give also a ``precise'' counterexample to show the exact role of the assumption $L<\kappa$.

\begin{example}
{\rm Let us take a situation so that $G(t)=-2t$ for $-3\leq t\leq 3$. Assume also that $\kappa=2$ and $t_0=t_1=-1$. Then it is immediate to deduce by~(\ref{evolmemory}) that it will be $t_j=-1$ for all $j\equiv 0\ {\rm (mod\ 3)}$ or $j\equiv 1\ {\rm (mod\ 3)}$, while $t_j=2$ for all $j\equiv 2\ {\rm (mod\ 3)}$.}
\end{example}

In the example above there is no convergence even though the Lipschitz constant $L$ coincides with the memory coefficient $\kappa$, and it is immediate to modify the example to find a situation with no convergence with $L=\kappa$ for any $\kappa$. This ensures that the assumption $L<\kappa$ in Theorem~\ref{g'<L} is almost sharp. Indeed, we can slightly strengthen the assumption of Theorem~\ref{g'<L} covering also the situation in which $L=\kappa$ but the Lipschitzianity is strict. We recall that a function $\varphi:X\to Y$ is said \emph{strictly Lipschitz with constant $L$} if for any $x_1\neq x_2$ in $X$ one has
\[
\big\|\varphi(x_1)-\varphi(x_2)\big\|_Y < L \|x_1-x_2 \|_X\,.
\]
We have the following result.
\begin{thm}
Assume that $h_1$ and $h_2$ are continuous and non-decreasing and that the function $G$ given by~(\ref{defG}) is strictly Lipschitz with constant $\kappa$. Then $t_n\to \bar t$ for $n\to +\infty$, hence also $\psi_n\to \bar\psi$ uniformly on the compact subsets of  $\Omega\setminus \{\tau=\bar t\}$.
\end{thm}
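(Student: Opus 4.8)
The plan is to rerun the scheme of Theorem~\ref{g'<L} with the borderline value $L=\kappa$, keeping track of the fact that the one place where the Lipschitz bound for $G$ enters becomes a \emph{strict} inequality, and then to turn ``strict but not quantitative'' into an honest decay by a compactness argument. Several things survive verbatim from the proof of Theorem~\ref{g'<L}: since $h_1,h_2$ are non-decreasing, $G$ is non-increasing; since $m(t)$ is constant for $t\le\inf_\Omega\tau$ and for $t\ge\sup_\Omega\tau$, the image $G(\R)=:K$ is a compact interval and hence $t_n\in K$ for all $n>\kappa$, so the sequence is bounded; and the inductive mechanism of Lemma~\ref{lemmachloe}\,(A) is valid with $L=\kappa$ (it only needs $L\le\kappa$), now with $C_j=1$, so that the properties $P^j_m(\alpha_0)$ hold for every $m$ and every $1\le j\le\kappa$ with, say, $\alpha_0=\kappa\,\mathrm{diam}\big(K\cup\{\bar t\}\big)$. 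The gain $C_j<1$ of Theorem~\ref{g'<L} is lost: this is exactly the obstruction the strictness must overcome.

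The new point is this. Inspecting the proof of Lemma~\ref{lemmachloe}\,(A), the bound $\sum_{i\in I}[G(t_i)-G(\bar t)]\le\alpha$ (and its lower counterpart) is produced by estimating $G(t_{\bar m})-G(\bar t)$, with $\bar m=\max I$, by $\kappa\,|t_{\bar m}-\bar t|$; replacing this by the strict bound $|G(t_{\bar m})-G(\bar t)|<\kappa\,|t_{\bar m}-\bar t|$ yields $\big|\sum_{i\in I}[G(t_i)-G(\bar t)]\big|<\alpha$ unless $t_\ell=\bar t$ for all the (finitely many) indices $\ell$ encountered while unwinding the recursion at $\bar m$, in which case that sum is simply $0$. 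Set
\[
\beta:=\limsup_{m\to\infty}\ \sup\Big\{\,\big|\textstyle\sum_{i\in I}[G(t_i)-G(\bar t)]\big|\ :\ I\subseteq\{m,m-1,\dots,m-\kappa+1\},\ m\in I\,\Big\}\,,
\]
which is finite by the previous paragraph; I claim $\beta=0$. If not, choose $m_k\to\infty$ and admissible $I_k$ along which this quantity tends to $\beta$, and extract (diagonally, over windows of bounded depth around $m_k$) a limit of the corresponding finite pieces of the trajectory: one obtains a finite configuration still obeying the evolution rule~\eqref{evolmemory} on the relevant range and on which some admissible sum equals $\beta$. Applying the strict estimate to it gives value $<\beta$ — the alternative, that all involved values equal $\bar t$, is excluded since then the sum would be $0<\beta$ — contradicting the definition of $\beta$. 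Hence $\beta=0$, and taking $I$ to be a full window of length $\kappa$ gives $\kappa\,|t_{m+1}-\bar t|=\big|\sum_{i=m-\kappa+1}^{m}[G(t_i)-G(\bar t)]\big|\to0$, i.e.\ $t_n\to\bar t$.

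The step I expect to cost the most work is precisely this quantitative upgrade: one has to make rigorous that strictness propagates through the (finitely many) recursion steps hidden inside Lemma~\ref{lemmachloe}, that along such a chain a value $\ne\bar t$ must appear whenever the total sum is nonzero, and that the compactness extraction is carried out on histories long enough for the limiting configuration to satisfy~\eqref{evolmemory} wherever the estimate uses it. Once $t_n\to\bar t$ is established, the uniform convergence of $\psi_n$ to $\bar\psi$ on the compact subsets of $\Omega\setminus\{\tau=\bar t\}$ follows exactly as at the end of the proof of Proposition~\ref{g'<1}.
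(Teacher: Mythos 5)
Your proof is correct, but the mechanism by which you extract actual decay from the strict Lipschitz bound is genuinely different from the paper's. The common part: both proofs note that Lemma~\ref{lemmachloe} survives with $L=\kappa$ (so the properties $P^j_m(\alpha_0)$ hold uniformly in $m$, but the constants $C_j$ of \eqref{defcj} degenerate to $1$), and both re-examine the single-step estimates \eqref{ultstr}--\eqref{stimagrezza} to see that strict Lipschitzianity makes each individual inequality \eqref{pmjalfa} strict. The paper then makes this \emph{quantitative}: by compactness it finds, for each $\eps>0$, a constant $\kappa_\eps<\kappa$ controlling the difference quotients of $G$ away from $\bar t$ (its \eqref{kappaeps}), splits into the cases $|t_{\bar m}-\bar t|<\eps$ and $|t_{\bar m}-\bar t|\ge\eps$, and builds a continuous map $\xi$ with $\xi(\alpha)<\alpha$ such that $P^j_m(\alpha)$ for all $m\ge n$ upgrades to $P^j_m(\xi(\alpha))$ for $m>n+\kappa(\kappa-1)$; iterating and using the continuity of $\xi$ forces $\xi^h(\alpha_0)\to 0$. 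You instead run a soft compactness argument on the trajectory itself: define $\beta$ as the limsup of the window sums, extract a limiting history along a maximizing subsequence, observe that it still obeys \eqref{evolmemory} and all the bounds $P^j(\beta)$ (each inequality passes to the limit by continuity of $G$), and apply the strict single-step estimate to this one fixed configuration to contradict $\beta>0$. Both routes are legitimate. Yours buys simplicity: it avoids constructing $\xi$ and in particular the verification of its continuity, which the paper declares ``clear by the construction'' but which really requires choosing $\eps$ and $\kappa_\eps$ coherently as functions of $\alpha$. The price is that you obtain no rate of convergence, whereas the paper's $\xi$ is in principle explicit. Two small points on your write-up: you should state explicitly that the limiting configuration inherits not only the evolution rule but also all the bounds $P^j(\beta)$ on a window of depth roughly $3\kappa$ behind the maximizing index, since the strict estimate uses them through \eqref{ultstr}; and your dichotomy can be cleaned up, because unwinding the recursion as in \eqref{primaineq}--\eqref{secondaineq} shows that for $\alpha>0$ \emph{every} admissible sum is strictly below $\alpha$ in absolute value (if no index with $t_\ell\neq\bar t$ of the useful sign is ever met, the recursion terminates at the empty sum, equal to $0<\alpha$), so the ``unless all $t_\ell=\bar t$'' escape clause is never actually needed.
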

\begin{proof}
We start by recalling that the result of Lemma~\ref{lemmachloe} has been proved also in the case $L=\kappa$; therefore, we can start as in the proof of Theorem~\ref{g'<L} finding a constant $\bar\alpha$ and knowing that all the properties $P^j_m(\bar\alpha)$ with $1\leq j\leq \kappa$ and $m\in\N$ are valid. However, for $L=\kappa$ all the constants $C_j$ given in~(\ref{defcj}) are all equal to $1$, so obtaining $P^j_m(C_j^h\bar\alpha)$ is of no help.\par
We now claim what follows: there exists a continuous and decreasing function $\xi:\R^+\to\R^+$ such that $\xi(\alpha)<\alpha$ for all $\alpha>0$, with the property that if all the $P^j_m(\alpha)$ with $1\leq j \leq \kappa$ and $m\geq n$ are valid, then all the $P^j_m(\xi(\alpha))$ with $1\leq j\leq \kappa$ and $m>n + \kappa(\kappa-1)$ hold. Notice that this will immediately give the thesis, because for any $h\in\N$ we will have, for $m$ big enough, all the properties $P^j_m(\xi^h(\alpha))$, and by the continuity of $\xi$ the sequence $h\mapsto \xi^h(\bar\alpha)$ must converge to $0$.\par
To conclude the proof we take $\alpha>0$ and we suitably modify the proof of Lemma~\ref{lemmachloe} for the case of $L=\kappa$ but with the strict Lipschitzianity, so to show the existence of the desired constant $\xi(\alpha)<\alpha$. Our plan is to find constants $\alpha_j<\alpha$ for $1\leq j\leq \kappa$ in such a way that $P^j_m(\alpha_j)$ is true for $m>\bar m + \kappa(j-1)$. The key idea is the following: since $G$ is \emph{strictly} $\kappa-$Lipschitz, for any $\eps \in ]0,\alpha[$ there exists $\kappa_\eps<\kappa$ so that
\begin{align}\label{kappaeps}
\big| G(t)-G(\bar t)\big| \leq \kappa_\eps |t-\bar t|\,, && \hbox{for all $t\in [\bar t-\alpha,\bar t-\eps]\cup [\bar t+\eps,\bar t+\alpha]$}\,:
\end{align}
this is trivial by compactness, but it will also be very useful for our purpose. We start arguing by induction. The case $j=1$ is very easy: indeed, we already noticed in~(\ref{nsc}) that for all $\bar m>n$ one has $| t_{\bar m} - \bar t |\leq \alpha/\kappa$. Then we can fix an arbitrary $\eps<\alpha/\kappa$. If $|t_{\bar m}-\bar t|<\eps$, then by Lipschitzianity of $G$ we have
\[
\big|G(t_{\bar m})-G(\bar t)\big| \leq \kappa \eps\,;
\]
on the other hand, if $|t_{\bar m}-\bar t|\geq \eps$, by making use of~(\ref{kappaeps}) we know that
\[
\big|G(t_{\bar m})-G(\bar t)\big| \leq \kappa_\eps |t_{\bar m}-\bar t| \leq \frac{\kappa_\eps}{\kappa}\,\alpha\,.
\]
Hence we obtain the case $j=1$ with $\alpha_1=\min\{\kappa_\eps\alpha/\kappa, \kappa\eps\}<\alpha$.\par
Consider now the case $1<j\leq \kappa$, and assume that the claim has been already proved up to $j-1$. Let us begin as in Lemma~\ref{lemmachloe}: supposing that $t_{\bar m}\leq \bar t$ by symmetry, as in~(\ref{primaineq'}) we have
\begin{equation}\label{below}\begin{split}
\sum_{i\in I} \big[ G(t_i) - G(\bar t) \big] &= G(t_{\bar m})-G(\bar t) + \sum_{i\in I\setminus \{{\bar m}\}} \big[ G(t_i) - G(\bar t) \big]\\
&\geq \sum_{i\in I\setminus \{{\bar m}\}} \big[ G(t_i) - G(\bar t) \big] \geq -\alpha_{j-1}
\end{split}\end{equation}
and the estimate from below is done. Concerning the estimate from above, there are again two cases to consider: fix an arbitrary
\[
\eps < \frac{\alpha-\alpha_{j-1}}{\kappa}\,.
\]
If $|t_{\bar m}-\bar t|<\eps$, then
\begin{equation}\label{above1}
\sum_{i\in I}\ \left[ G(t_i)-G(\bar t) \right] \leq \kappa\eps + \sum_{i\in I\setminus \{{\bar m}\}}\ \left[ G(t_i)-G(\bar t) \right] \leq \kappa\eps +\alpha_{j-1}<\alpha\,.
\end{equation}
On the other hand, if $|t_{\bar m}-\bar t|\geq \eps$, we can use~(\ref{kappaeps}) to derive, from~(\ref{ultstr}) which says
\[
t_{\bar m}-\bar t\geq \frac{\Big(\sum_{i\in I\setminus \{{\bar m}\}}\ \left[ G(t_i)-G(\bar t) \right] \Big)-\alpha}{\kappa}\,,
\]
the estimate
\[
G(t_{\bar m}) - G(\bar t) \leq \frac{\kappa_\eps}{\kappa}\,\bigg( \alpha-\Big(\sum_{i\in I\setminus \{{\bar m}\}}\ \left[ G(t_i)-G(\bar t) \right] \Big)\bigg)\,.
\]
Therefore, we mimic~(\ref{conclusion}) to find
\begin{equation}\label{above2}
\sum_{i\in I} \left[ G(t_i) - G(\bar t) \right] \leq \frac{\kappa_\eps}{\kappa}\,\alpha + \Big(1-\frac{\kappa_\eps}{\kappa}\Big)\sum_{i\in I\setminus \{{\bar m}\}}\ \left[ G(t_i)-G(\bar t) \right]
\leq \frac{\kappa_\eps}{\kappa}\,\alpha + \Big(1-\frac{\kappa_\eps}{\kappa}\Big)\alpha_{j-1}\,.
\end{equation}
Recalling~(\ref{below}), (\ref{above1}) and~(\ref{above2}), we obtain the inductive step of the claim by setting
\[
\alpha_j := \max\bigg\{\alpha_{j-1},\, \kappa\eps +\alpha_{j-1},\, \frac{\kappa_\eps}{\kappa}\,\alpha + \Big(1-\frac{\kappa_\eps}{\kappa}\Big)\alpha_{j-1}\bigg\} <\alpha\,.
\]

We finally set $\xi(\alpha)=\alpha_\kappa=\max_{j=1,\ldots , \kappa} \alpha_j$; since the continuity of this map $\xi$ is clear by the construction, the proof is obtained.
\end{proof}

We conclude the paper with a last result in the same philosophy of Theorem~\ref{pruddiver}, in which we considered a prudence increasing in time: we consider what happens with an increasing memory. More precisely, we focus on the case of ``global memory'', where the citizens remember \emph{all} the previous days and then the definition~(\ref{evolmemory}) must be replaced by
\begin{equation}\label{globalmemory}
t_{n+1}= \frac{\sum_{m=1}^{n} G(t_m)}{n}\,.
\end{equation}
We can show that there is always convergence with global memory, and this leads exactly to the same considerations as in Remark~\ref{remsmart}.
\begin{thm}[Global memory]
Assume that $h_1$ and $h_2$ are continuous and non-decreasing. Assume that the function $G$ is Lipschitz, and that the evolution is given by the global memory~(\ref{globalmemory}). Then $t_n\to \bar t$ and so $\psi_n\to \bar\psi$ uniformly on the compact subsets of $\Omega\setminus\{\tau=\bar t\}$.
\end{thm}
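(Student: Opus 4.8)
The plan is to convert the global-memory recursion~(\ref{globalmemory}) into a one-step recursion with vanishing step-size, and then run an elementary telescoping-contraction argument. Recall from Proposition~\ref{exuninop} that $\bar t$ is the unique solution of $G(\bar t)=\bar t$, and that the monotonicity of $h_1,h_2$ forces $G$ to be non-increasing; by assumption $G$ is also $L$-Lipschitz for some $L>0$. Writing~(\ref{globalmemory}) at $n$ and at $n-1$ and subtracting, the two sums telescope and we obtain, for every $n\geq 1$ (with the convention that the $n=1$ line just reads $t_2=G(t_1)$, so that $t_1$ is the unique datum),
\[
n\,t_{n+1}-(n-1)\,t_n=G(t_n)\,.
\]
Subtracting $n\bar t$ and using $G(\bar t)=\bar t$, and setting $u_n:=t_{n+1}-\bar t$ (so that $u_0=t_1-\bar t$ is given), this rewrites as
\[
u_n=u_{n-1}-\frac1n\,\Psi(u_{n-1})\,,\qquad \text{where}\quad \Psi(x):=x-\big(G(\bar t+x)-G(\bar t)\big)\,.
\]

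The next step is to record the two elementary properties of $\Psi$ that make the scheme converge to $0$. On the one hand, since $G$ is $L$-Lipschitz the map $\Psi$ is $(1+L)$-Lipschitz with $\Psi(0)=0$, so $|\Psi(x)|\leq(1+L)|x|$ for all $x$. On the other hand --- and this is the point where the hypothesis that the $h_i$ are non-decreasing enters in an essential way --- the map $x\mapsto G(\bar t+x)-G(\bar t)$ is non-increasing and vanishes at $0$, hence has sign opposite to $x$; therefore $\Psi(x)$ has the same sign as $x$ and satisfies $|\Psi(x)|\geq|x|$. Consequently, writing $\Psi(u_{n-1})=c_{n-1}u_{n-1}$ with $c_{n-1}\in[1,1+L]$ when $u_{n-1}\neq0$ (and noting $u_n=0$ whenever $u_{n-1}=0$), the recursion becomes $u_n=(1-c_{n-1}/n)\,u_{n-1}$. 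Thus, as soon as $n$ is an integer with $n\geq 1+L$, the factor $1-c_{n-1}/n$ lies in $[1-(1+L)/n,\,1-1/n]\subseteq[0,1)$, so $|u_n|\leq(1-1/n)\,|u_{n-1}|$; telescoping from such a fixed $n_0$ gives
\[
|u_n|\leq|u_{n_0}|\prod_{m=n_0+1}^{n}\Big(1-\frac1m\Big)=|u_{n_0}|\,\frac{n_0}{n}\ \longrightarrow\ 0\,.
\]
Hence $t_{n+1}-\bar t=u_n\to0$, i.e.~$t_n\to\bar t$. From here the convergence of $\psi_n$ to $\bar\psi$, uniformly on the compact subsets of $\Omega\setminus\{\tau=\bar t\}$, follows exactly as at the end of the proof of Proposition~\ref{g'<1}: any such compact set $K$ has $\tau(K)$ disjoint from some interval $I\ni\bar t$, and for $n$ large one has $t_n\in I$, whence $\psi_n\equiv\bar\psi$ on $K$.

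As for the difficulty: once the telescoping reformulation $u_n=u_{n-1}-\frac1n\Psi(u_{n-1})$ is in place, the rest is routine, and the only genuinely delicate point is that one needs \emph{both} estimates on $\Psi$ --- the Lipschitz bound $|\Psi(x)|\leq(1+L)|x|$, which keeps the multiplier $\leq 1-1/n<1$ eventually, \emph{and} the monotonicity bound $|\Psi(x)|\geq|x|$, which keeps the multiplier $\geq 0$. It is precisely the monotonicity of the queue functions that provides the second bound; dropping it would, in line with the non-convergence examples above, allow the multiplier $1-c_{n-1}/n$ to leave $[0,1)$ and the argument to break down.
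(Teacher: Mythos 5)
Your proof is correct and follows essentially the same route as the paper's: both rewrite the global-memory rule as the one-step recursion $t_{n+1}-\bar t=(1-\tfrac1n)(t_n-\bar t)+\tfrac1n\big(G(t_n)-G(\bar t)\big)$ (your $\Psi$ is just a repackaging of this), use the monotonicity of $G$ for the upper bound and its Lipschitz constant for the lower bound on the multiplier, and conclude via $\prod_{m}(1-\tfrac1m)\to 0$. Your version additionally makes the rate $|u_n|\leq |u_{n_0}|\,n_0/n$ explicit, but the argument is the same.
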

\begin{proof}
We can rewrite~(\ref{globalmemory}) in a useful way as
\[
t_{n+1}-\bar t = \Big(1-\frac 1 n\Big) \big(t_n-\bar t) + \frac {1}{n}\, \big(G(t_n)-G(\bar t)\big)\,.
\]
Take then $n\in\N$ and assume for simplicity that $t_n\geq\bar t$, so that $G(t_n)\leq G(\bar t)$: we can estimate
\begin{gather*}
t_{n+1}-\bar t \leq \Big(1-\frac 1 n\Big) \big(t_n-\bar t\big)\,,\\
t_{n+1}-\bar t \geq \frac {1}{n}\, \big(G(t_n)-G(\bar t)\big)\geq -\frac{L}{n}\,\big(t_n-\bar t\big)\,;
\end{gather*}
as a consequence, provided $n$ is big enough, regardless of the sign of $t_n-\bar t$ we have
\[
\big|t_{n+1}-\bar t \big| \leq \Big(1-\frac 1 n\Big) \big|t_n-\bar t\big|\,.
\]
Since $\Pi_{n=1}^{\infty} \big( 1-1/n \big) = 0$, this immediately leads to the thesis.
\end{proof}

\begin{rem}{\rm One can of course consider a lot of other ``schemes of memory''. For instance, one could have a global memory with different weights for older days, such as
\[
t_{n+1}= \sum_{m=1}^{n} \frac{1}{2^m(1-2^{-n})}\,G(t_m) \,,
\]
where the term $(1-2^{-n})$ is put to let the sum of the coefficients equal $1$. It is not hard to notice that our method of proof can be easily adapted to a generality of those ``schemes'': it is important that $t_{n+1}$ is given by a mean of terms $G(t_j)$ for some times $j\leq n$ and that the associated coefficients are decreasing as the days become older; that is,
\[
t_{n+1} = \sum_{m=1}^n C(n,m) G(t_j)
\]
with $0\leq C(n,m)\leq 1$, $\sum_{m=1}^n C(n,m) =1$ and $C(n,m)\leq C(n,m')$ for $m<m'$. In these cases one has the desired convergence provided $G$ is $L$-Lipschitz with a suitable constant $L$ as well as general convergence for a Lipschitz $G$ for reasonable global schemes of memory.
}\end{rem}

\end{document}